   \numberwithin{equation}{section}
\newtheorem{thm}{Theorem}[section]
\newtheorem{lem}[thm]{Lemma}
\newtheorem{defn}[thm]{Definition}
\begin{document}
\begin{frontmatter}
\author{Tong Wu$^1$}
\ead{wut977@nenu.edu.cn}
\author{Yong Wang\corref{cor2}}
\ead{wangy581@nenu.edu.cn}
\cortext[cor2]{Corresponding author.}

\address{School of Mathematics and Statistics, Northeast Normal University,
Changchun, 130024, China}
\title{The operator $\sqrt{-1}\widehat{c}(V)(d+\delta)$ and the Kastler-Kalau-Walze type theorems}
\begin{abstract}
In this paper, we obtain two Lichnerowicz type formulas for the operators $\sqrt{-1}\widehat{c}(V)(d+\delta)$ and $-\sqrt{-1}(d+\delta)\widehat{c}(V)$. And we give the proof of Kastler-Kalau-Walze type theorems for the operators $\sqrt{-1}\widehat{c}(V)(d+\delta)$ and $-\sqrt{-1}(d+\delta)\widehat{c}(V)$ on 3,4-dimensional oriented compact manifolds with (resp.without) boundary.
\end{abstract}
\begin{keyword} Lichnerowicz type formulas; the operators $\sqrt{-1}\widehat{c}(V)(d+\delta)$ and $-\sqrt{-1}(d+\delta)\widehat{c}(V)$; Kastler-Kalau-Walze type theorems.\\

\end{keyword}
\end{frontmatter}
\section{Introduction}
 Until now, many geometers have studied noncommutative residues. In \cite{Gu,Wo}, authors found noncommutative residues are of great importance to the study of noncommutative geometry. In \cite{Co1}, Connes used the noncommutative residue to derive a conformal 4-dimensional Polyakov action analogy. Connes showed us that the noncommutative residue on a compact manifold $M$ coincided with the Dixmier's trace on pseudodifferential operators of order $-{\rm {dim}}M$ in \cite{Co2}.
And Connes claimed the noncommutative residue of the square of the inverse of the Dirac operator was proportioned to the Einstein-Hilbert action.  Kastler \cite{Ka} gave a
brute-force proof of this theorem. Kalau and Walze proved this theorem in the normal coordinates system simultaneously in \cite{KW} .
Ackermann proved that
the Wodzicki residue  of the square of the inverse of the Dirac operator ${\rm  Wres}(D^{-2})$ in turn is essentially the second coefficient
of the heat kernel expansion of $D^{2}$ in \cite{Ac}.

On the other hand, Wang generalized the Connes' results to the case of manifolds with boundary in \cite{Wa1,Wa2},
and proved the Kastler-Kalau-Walze type theorem for the Dirac operator and the signature operator on lower-dimensional manifolds
with boundary \cite{Wa3}. In \cite{Wa3,Wa4}, Wang computed $\widetilde{{\rm Wres}}[\pi^+D^{-1}\circ\pi^+D^{-1}]$ and $\widetilde{{\rm Wres}}[\pi^+D^{-2}\circ\pi^+D^{-2}]$, where the two operators are symmetric, in these cases the boundary term vanished. But for $\widetilde{{\rm Wres}}[\pi^+D^{-1}\circ\pi^+D^{-3}]$, Wang got a nonvanishing boundary term \cite{Wa5}, and give a theoretical explanation for gravitational action on boundary. In others words, Wang provides a kind of method to study the Kastler-Kalau-Walze type theorem for manifolds with boundary.
In \cite{lkl}, L\'{o}pez and his collaborators introduced an elliptic differential operator which is called the Novikov operator. In \cite{WW}, Wei and Wang proved Kastler-Kalau-Walze type theorem for modified Novikov operators on compact manifolds. In \cite{Wa3}, the leading symbol of the Dirac operator $D$ is $\sqrt{-1}c(\xi)$. To get the leading symbol of the operator which is not $\sqrt{-1}c(\xi)$, we consider the operator $\sqrt{-1}\widehat{c}(V)(d+\delta)$ in this paper, which is motivated by the sub-signature operator in \cite{wpz1}. \\
\indent In this paper, we obtain two Lichnerowicz type formulas for the operators $\sqrt{-1}\widehat{c}(V)(d+\delta)$ and $-\sqrt{-1}(d+\delta)\widehat{c}(V)$, and prove the following main theorems.\\
\begin{thm}
\label{thm1.1}
Let $M$ be a $4$-dimensional oriented
compact manifold with boundary $\partial M$ and the metric
$g^{TM}$ as in Section \ref{Section:3}, the operators ${D_V}=\sqrt{-1}\widehat{c}(V)(d +\delta)$ and ${D_V}^*=-\sqrt{-1}(d +\delta)\widehat{c}(V)$ be on $\widetilde{M}$ ($\widetilde{M}$ is a collar neighborhood of $M$), then
\begin{align}
\label{ali1.1}
&\widetilde{{\rm Wres}}[\pi^+{D_V}^{-1}\circ\pi^+({D_V}^*)^{-1}]\nonumber\\
&=32\pi^2\int_{M}\bigg(-\frac{4}{3}K\bigg)d{\rm Vol_{M}}+\int_{\partial M}\left(-\frac{3ih'(0)}{2}-\frac{27\pi^2h'(0)}{8}-\frac{\pi^2}{4}\right)\pi\Omega_3d{\rm Vol_{M}},
\end{align}
where $K$ is the scalar curvature, $h$ is defined by (\ref{a1}) and ${\rm \Omega_{3}}$ is the canonical volume of $S^{3}$.
\end{thm}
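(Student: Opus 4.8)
The plan is to follow the now-standard Wang-style machinery for computing $\widetilde{{\rm Wres}}[\pi^+{D_V}^{-1}\circ\pi^+({D_V}^*)^{-1}]$ by splitting it into an interior piece and a boundary piece. The interior piece is governed by the noncommutative residue $\mathrm{Wres}$ of a second-order operator built from $D_V$ and $D_V^*$; since both operators have leading symbol of the shape $\sqrt{-1}\widehat{c}(V)c(\xi)$ (up to a bundle automorphism coming from $\widehat{c}(V)$), the product $D_V^{-1}(D_V^*)^{-1}$ has the same leading behaviour as $(d+\delta)^{-2}$ twisted by a $\widehat{c}(V)$-factor, so Ackermann's theorem identifies the interior integrand with a heat-kernel coefficient. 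Concretely, I would invoke the Lichnerowicz type formulas established earlier in the paper to write $D_VD_V^*$ (or the relevant product) as a generalized Laplacian $\Delta - E$, and then read off $\mathrm{Wres} = c_n \int_M \mathrm{tr}(\tfrac{K}{6} - E)\,d\mathrm{Vol}$ in dimension $4$; collecting the fibre traces over $\Lambda^*T^*M$ (dimension $16$) and the $V$-dependent terms should collapse to the stated $-\tfrac{4}{3}K$ after multiplication by $32\pi^2$.

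For the boundary term the plan is the usual local computation near $\partial M$ in the product metric $dx_n^2 + h(x_n)g^{\partial M}$ with $h$ as in (\ref{a1}). First I would fix boundary normal coordinates and compute the symbols $\sigma_{-1}(D_V^{-1})$, $\sigma_{-2}(D_V^{-1})$ and likewise for $(D_V^*)^{-1}$ up to the orders needed — here one needs the full $\sigma_{-1}$ plus the sub-leading $\sigma_{-2}$ and $\sigma_{-3}$ pieces because the boundary integrand of $\widetilde{{\rm Wres}}[\pi^+{D_V}^{-1}\circ\pi^+({D_V}^*)^{-1}]$ picks up contributions from $\sigma_{-1}\cdot\sigma_{-1}$, $\sigma_{-1}\cdot\sigma_{-2}$, $\sigma_{-2}\cdot\sigma_{-1}$ and $\sigma_{-2}\cdot\sigma_{-2}$ type products (analogous to Wang's $\widetilde{{\rm Wres}}[\pi^+D^{-1}\circ\pi^+D^{-1}]$ and $[\pi^+D^{-1}\circ\pi^+D^{-3}]$ cases). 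Then the boundary term is
\[
\Phi=\int_{\partial M}\int_{|\xi'|=1}\int_{-\infty}^{+\infty}\sum \mathrm{trace}\big[\pi^+_{\xi_n}\sigma_r({D_V}^{-1})\times\partial^\alpha_{\xi_n}\sigma_l(({D_V}^*)^{-1})\big]\,d\xi_n\,\sigma(\xi')\,dx',
\]
and each summand is evaluated by a contour/residue integral in $\xi_n$ using $\pi^+_{\xi_n}$ (the projection picking out poles in the upper half plane). I would organize the calculation exactly as in \cite{Wa5}: compute $\pi^+_{\xi_n}$ of the relevant symbols at $x_n=0$, plug into the five or so surviving terms, do the $\xi_n$-integrals by residues at $\xi_n=\sqrt{-1}$, and then the $|\xi'|=1$ integral over $S^2$.

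The main obstacle will be bookkeeping the $\widehat{c}(V)$ factors and the Clifford/exterior-algebra traces: because $D_V=\sqrt{-1}\widehat{c}(V)(d+\delta)$ mixes the two Clifford actions $c(\cdot)$ and $\widehat{c}(\cdot)$, the symbol expansions carry extra $\widehat{c}(V)$ terms (and derivatives of $V$) that do not appear in the pure Dirac/signature computations, and one must carefully track which of them survive after taking $\mathrm{trace}$ over the $16$-dimensional fibre and after the $\xi_n$- and $S^2$-integrations. I expect most $V$-linear terms to integrate to zero by parity in $\xi$, leaving the scalar $h'(0)$ contributions; isolating the precise rational and $\pi^2$ coefficients $-\tfrac{3ih'(0)}{2}$, $-\tfrac{27\pi^2h'(0)}{8}$ and $-\tfrac{\pi^2}{4}$ in front of $\pi\Omega_3$ is where the delicate part of the computation lies, and it is essentially a (lengthy but mechanical) residue computation once the symbols are in hand. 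I would also double-check the normalization constant $32\pi^2$ against the dimension-$4$ Wodzicki-residue formula to make sure the interior and boundary pieces are consistently normalized.
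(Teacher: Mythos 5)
Your proposal follows essentially the same two-step strategy as the paper: the interior term via the Lichnerowicz identity $D_V^*D_V=(d+\delta)^2$ and Ackermann's heat-coefficient formula, and the boundary term via Wang's case-by-case contour/residue computation of $\pi^+_{\xi_n}$ applied to the symbols of $D_V^{-1}$ and $(D_V^*)^{-1}$. One correction to your bookkeeping: with $r,l\le -1$ and $r+l-k-j-|\alpha|=-3$ in dimension $4$, only $\sigma_{-1}$ and $\sigma_{-2}$ ever enter (the index pairs are $(r,l)\in\{(-1,-1),(-1,-2),(-2,-1)\}$, giving the five cases a-I, a-II, a-III, b, c of the paper), so neither $\sigma_{-3}$ nor a $\sigma_{-2}\cdot\sigma_{-2}$ pairing is needed as you claimed.
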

\begin{thm}
\label{thm1.2}
Let $M$ be a $4$-dimensional oriented
compact manifold with boundary $\partial M$ and the metric
$g^{TM}$ as in Section \ref{Section:3}, the operator ${D_V}=\sqrt{-1}\widehat{c}(V)(d +\delta)$ be on $\widetilde{M}$ ($\widetilde{M}$ is a collar neighborhood of $M$), then
\begin{align}
\label{ali1.2}
&\widetilde{{\rm Wres}}[\pi^+{D_V}^{-1}\circ\pi^+{D_V}^{-1}]\nonumber\\
&=32\pi^2\int_{M}\bigg(
-\frac{4}{3}K-8\sum_{q=1}^4|\nabla^L_{e_q}V|^2\bigg)d{\rm Vol_{M}}+\int_{\partial M}\left(-\frac{3ih'(0)}{2}-\frac{27\pi^2h'(0)}{8}-\frac{\pi^2}{4}\right)\pi\Omega_3d{\rm Vol_{M}},\nonumber\\
\end{align}
where $K$ is the scalar curvature, $h$ is defined by (\ref{a1}) and ${\rm \Omega_{3}}$ is the canonical volume of $S^{3}$.
\end{thm}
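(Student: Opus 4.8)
The proof follows Wang's scheme for Kastler--Kalau--Walze type theorems on manifolds with boundary (\cite{Wa3,Wa5}): the functional $\widetilde{{\rm Wres}}$ splits into an interior integral over $M$ plus a boundary integral over $\partial M$. For the interior part one uses the Lichnerowicz type formulas established earlier. Since the leading symbol of $d+\delta$ is $\sqrt{-1}c(\xi)$, the leading symbol of ${D_V}$ is $-\widehat{c}(V)c(\xi)$, so ${D_V}^*{D_V}$ and ${D_V}^2$ are, up to sign, Laplace type operators on $\Lambda^*T^*M$, written as $-(\text{Bochner Laplacian})-E$ with $E$ an explicit endomorphism built from the scalar curvature $K$ and, for ${D_V}^2$, also from $\nabla^L V$. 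The interior part of $\widetilde{{\rm Wres}}[\pi^+{D_V}^{-1}\circ\pi^+({D_V}^*)^{-1}]$ is then $\int_M\int_{|\xi|=1}{\rm tr}[\sigma_{-4}(({D_V}^*{D_V})^{-1})]\,d\xi\,dx$, i.e. the ordinary noncommutative residue of the Laplace type operator $({D_V}^*{D_V})^{-1}$, which by the symbol computation parallel to the original Kastler--Kalau--Walze theorem (\cite{Ka,KW}) yields $32\pi^2\int_M(-\tfrac{4}{3} K)\,d{\rm Vol}_M$; for $\pi^+{D_V}^{-1}\circ\pi^+{D_V}^{-1}={D_V}^{-2}$ the only change is the extra $\nabla^L V$ term in $E$, which adds $-8\sum_{q=1}^4|\nabla^L_{e_q}V|^2$ as in \eqref{ali1.2}.

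For the boundary part I would work in the collar $\widetilde{M}$ with the metric of Section \ref{Section:3}, namely $g^{TM}=\tfrac{1}{h(x_n)}g^{T\partial M}+dx_n^2$ near $\partial M$ with $h$ as in (\ref{a1}), using normal coordinates centred at a boundary point. The first task is to compute, near $x_n=0$, the homogeneous symbols $\sigma_{-1}$ and $\sigma_{-2}$ of ${D_V}^{-1}$ and of $({D_V}^*)^{-1}$ (and of ${D_V}^{-1}$ alone for Theorem \ref{thm1.2}): inverting the total symbol of ${D_V}$ order by order amounts to rationalising $-\widehat{c}(V)c(\xi)$ and keeping track of the contributions of $\partial\widehat{c}(V)$, of $c(dx_n)$, and of the Christoffel symbols of the warped metric, all evaluated at $x_n=0$ (where $h(0)$ and $h'(0)$ enter). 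One then applies the Cauchy projection $\pi^+_{\xi_n}$ in $\xi_n$ to these symbols and substitutes into Wang's boundary formula (\cite{Wa3,Wa5})
\begin{align}
\label{plan-bd}
\Psi=\int_{\partial M}\int_{|\xi'|=1}\int_{-\infty}^{+\infty}\sum\frac{(-i)^{|\alpha|+k+1}}{\alpha!(k+1)!}{\rm tr}\Big[\partial_{x_n}^{k+1}\partial_{\xi'}^{\alpha}\sigma^+_{r}({D_V}^{-1})\cdot\partial_{x'}^{\alpha}\partial_{\xi_n}^{k+1}\sigma_{l}(({D_V}^*)^{-1})\Big](x',0)\,d\xi_n\,\sigma(\xi')\,dx',
\end{align}
the sum running over the finitely many indices with $r+l-k-|\alpha|-1=-4$ and $r\le -1$, $l\le -1$, so that only $\sigma_{-1}$, $\sigma_{-2}$ and a single derivative enter.

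Each of these finitely many terms is then evaluated in three steps: (i) carry out the $\xi_n$-integral by residues, closing the contour in the upper half plane, which reduces to differentiating rational functions of $\xi_n$ with matrix coefficients assembled from $c(\xi')$, $c(dx_n)$, $\widehat{c}(V)$ and $h'(0)$ at their higher order poles; (ii) take the fibre trace over $\Lambda^*T^*M$ with the standard trace identities for products of Clifford elements, the key simplification being that traces containing an odd total number of $c$'s, and traces linear in $\widehat{c}(V)$, vanish, so the $V$-dependent pieces drop out; (iii) integrate over $|\xi'|=1$ in $\R^3$ using $\int_{S^2}\xi_i\,\sigma(\xi')=0$, $\int_{S^2}\xi_i\xi_j\,\sigma(\xi')=\tfrac{1}{3}{\rm Vol}(S^2)\delta_{ij}$, and similar identities. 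Organising the contributions into the several cases as in \cite{Wa5} and summing gives the boundary density, which is independent of $V$ and hence the same in \eqref{ali1.1} and \eqref{ali1.2}; adding the interior integral yields both theorems. The main obstacle is this boundary computation: the extra Clifford factor $\widehat{c}(V)$ roughly doubles the number of symbol terms that must be inverted, projected and traced compared with the pure de Rham operator, and one must keep the tangential/normal decomposition of $V$ and the resulting $\widehat{c}(V)$-factors consistent throughout the residue and trace calculations.
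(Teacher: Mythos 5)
Your overall strategy coincides with the paper's: the interior term comes from the Lichnerowicz-type formula for $D_V^2$ together with the identification of the residue with the second heat coefficient (giving the extra $-8\sum_{q=1}^4|\nabla^L_{e_q}V|^2$), and the boundary term is computed case by case from Wang's boundary formula, ending with the same $V$-independent boundary density as in Theorem \ref{thm1.1}. However, two concrete points in your plan would fail as written. First, your boundary formula is mis-transcribed: the correct expression (see (\ref{a8}) and (\ref{a75})) carries two independent indices $j,k$, with $\partial_{x_n}^{j}\partial_{\xi'}^{\alpha}\partial_{\xi_n}^{k}$ acting on $\sigma^{+}_{r}(D_V^{-1})$ and $\partial_{x'}^{\alpha}\partial_{\xi_n}^{j+1}\partial_{x_n}^{k}$ acting on $\sigma_{l}(D_V^{-1})$, the sum running over $r+l-k-j-|\alpha|-1=-4$. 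With your single exponent $k+1$ placed on both factors you lose the distinction between the cases $j=1$ and $k=1$ (the paper's cases a)~II) and a)~III)), and you put the $x_n$- and $\xi_n$-derivatives on the wrong factors, so the residue evaluation would not reproduce the five contributions $\Phi_1,\dots,\Phi_5$.

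Second, your stated mechanism for the $V$-independence of the boundary term (``traces linear in $\widehat{c}(V)$ vanish, so the $V$-dependent pieces drop out'') is not the operative one. The dominant boundary terms are quadratic in $\widehat{c}(V)$ and do not vanish: they reduce to the classical Dirac-type traces by anticommuting $\widehat{c}(V)$ past the $c$'s and using $\widehat{c}(V)^2=|V|^2=1$, and it is these that produce the nonzero boundary value. The genuinely new terms, involving $\partial_{x}(V_l)$ or $\widehat{c}(\nabla^L_{e_q}V)$, are the ones killed in the trace, via $V_l\partial_{x}(V_l)=\tfrac12\partial_{x}(|V|^2)=0$ and $g^{TM}(\nabla^L_{e_q}V,V)=0$, as in (\ref{111})--(\ref{a26}) and (\ref{a35}). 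For the present theorem the shortest route, and the one the paper takes, is to note that $\sigma_{-1}(D_V^{-1})=\sigma_{-1}((D_V^*)^{-1})$, so cases a)~I)--III) and b) are verbatim those of Theorem \ref{thm1.1}, while in case c) the only change is that $\sigma_0(D_V)$ lacks the term $-i\sum_q c(e_q)\widehat{c}(\nabla^L_{e_q}V)$, whose contribution was already shown to vanish; hence the boundary integrand is unchanged. Finally, note that the paper's normalization uses $\Omega_3$ and $\int_{S^3}\xi_i\xi_j=\tfrac{\pi^2}{2}\delta_{ij}$ rather than the $S^2$ moments you quote, so with your conventions the numerical coefficients would not match the stated ones.
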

\indent We note that two operators in Theorem \ref{thm1.2} are symmetric, but we still get the non-vanishing boundary term.\\
\begin{thm}\label{thm1.3}
Let $M$ be a $3$-dimensional
oriented compact manifold with boundary $\partial M$ and the metric
$g^{TM}$ as in Section \ref{Section:3}, the operator ${D_V}=\sqrt{-1}\widehat{c}(V)(d +\delta)$ be on $\widetilde{M}$ ($\widetilde{M}$ is a collar neighborhood of $M$), then
\begin{align}\label{b7}
\widetilde{{\rm Wres}}[(\pi^+D_V^{-1})^2]=4i\pi^2vol_{\partial M},\nonumber\\
\end{align}
where $vol_{\partial M}$ denotes the canonical volume form of $\partial M$.\\
\end{thm}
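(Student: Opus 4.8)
The plan is to carry out the boundary version of the Kastler--Kalau--Walze computation in the form developed by Wang \cite{Wa3}. Write $n=\dim M=3$. For classical pseudodifferential operators $P_1,P_2$ one has the splitting
\[
\widetilde{{\rm Wres}}[\pi^+P_1\circ\pi^+P_2]=\int_M\int_{|\xi|=1}{\rm trace}\big[\sigma_{-n}(P_1P_2)\big]\sigma(\xi)\,dx+\int_{\partial M}\Phi ,
\]
where $\Phi$ is the explicit boundary integrand recalled in Section \ref{Section:3}. Taking $P_1=P_2=D_V^{-1}$, the interior integrand is the noncommutative residue density of $D_V^{-2}$, an operator of order $-2$. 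Since $d+\delta$ has principal symbol $\sqrt{-1}\,c(\xi)$, odd in $\xi$, and $\widehat{c}(V)$ is of order $0$, the homogeneous components of the symbol of $D_V^{-2}$ satisfy $\sigma_{-2-j}(x,-\xi)=(-1)^{j}\sigma_{-2-j}(x,\xi)$; in particular $\sigma_{-3}(x,-\xi)=-\sigma_{-3}(x,\xi)$, and since $n=3$ is odd the antipodal map forces $\int_{|\xi|=1}{\rm trace}[\sigma_{-3}(D_V^{-2})]\sigma(\xi)=0$. Hence only the boundary term $\int_{\partial M}\Phi$ survives.

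The next step is to identify which pieces of $\Phi$ actually contribute. In Wang's formula the homogeneity degrees $r,l\le -1$ of the two symbols $\sigma_r(\pi^+D_V^{-1})$, $\sigma_l(D_V^{-1})$ and the orders $|\alpha|,j,k$ of the tangential, $\xi_n$- and normal derivatives are constrained by $r+l-|\alpha|-j-k-1=-n=-3$; since $r,l\le -1$ this is only possible for $r=l=-1$ and $|\alpha|=j=k=0$. Thus the whole boundary contribution collapses to a single integral
\[
\int_{\partial M}\Phi= -i\int_{\partial M}\int_{|\xi'|=1}\int_{-\infty}^{+\infty}{\rm trace}\big[\sigma^{+}_{-1}(D_V^{-1})\,\partial_{\xi_n}\sigma_{-1}(D_V^{-1})\big]\,d\xi_n\,\sigma(\xi')\,dx' ,
\]
where the symbols are taken at $x_n=0$ and only the leading symbol of $D_V^{-1}$ enters. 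This already accounts for the fact that, unlike in Theorems \ref{thm1.1}--\ref{thm1.2}, no $h'(0)$ occurs: $h'(0)$ first shows up in the subleading symbols. From $\sigma_1(D_V)=\sqrt{-1}\,\widehat{c}(V)\cdot\sqrt{-1}\,c(\xi)=-\widehat{c}(V)c(\xi)$ together with the Clifford relations of Section \ref{Section:3} ($c(\xi)^2=-|\xi|^2$, $\widehat{c}(V)c(\xi)=-c(\xi)\widehat{c}(V)$, $\widehat{c}(V)^2=\mp|V|^2$, $|V|=1$) one computes $\sigma_{-1}(D_V^{-1})=\sigma_1(D_V)^{-1}$, which is a scalar multiple of $\widehat{c}(V)c(\xi)/|\xi|^2$.

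It then remains to do the boundary calculation. In the coordinates of Section \ref{Section:3} the metric is of product type along $\partial M$, so at $x_n=0$ one has $|\xi|^2=|\xi'|^2+\xi_n^2$ and $c(\xi)=c(\xi')+\xi_n c(dx_n)$, and on $|\xi'|=1$ the symbol $\sigma_{-1}(D_V^{-1})(x',0,\xi',\xi_n)$ is a multiple of $\widehat{c}(V)\big(c(\xi')+\xi_n c(dx_n)\big)/(1+\xi_n^2)$. I would then (i) compute $\sigma^{+}_{-1}(D_V^{-1})$ by the $\xi_n$-projection $\pi^{+}$, which for these rational symbols is a partial-fractions/residue computation with poles at $\xi_n=\pm i$; (ii) differentiate the unprojected symbol in $\xi_n$; (iii) form the product and take the fibrewise trace over $\Lambda^*T^*M$, of rank $2^3=8$: since every $c(\cdot)$ anticommutes with $\widehat{c}(V)$ and $\widehat{c}(V)^2=\mp1$, the two factors $\widehat{c}(V)$ cancel and the trace reduces to combinations of ${\rm trace}[c(a)c(b)]=-8\,g(a,b)$, in which the tangential/normal cross terms $g(\xi',dx_n)$ vanish; (iv) evaluate the resulting elementary $\xi_n$-integral by residues and integrate the remaining constant over $\xi'\in S^1$ (a factor $2\pi$) and over $\partial M$ (a factor $vol_{\partial M}$). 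Collecting the powers of $i$, $2\pi$ and $2^3$ and the residue then gives $\widetilde{{\rm Wres}}[(\pi^+D_V^{-1})^2]=4i\pi^2\,vol_{\partial M}$.

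I expect the real work to be bookkeeping rather than geometry: fixing the conventions of $\pi^{+}$, the sign of $\widehat{c}(V)^2$, and the normalizations of $\widetilde{{\rm Wres}}$ and of $\sigma(\xi')$, and then carrying the resulting factors of $i$ and $2\pi$ through the projection and the $\xi_n$-contour integral so that both the phase (the explicit $i$) and the coefficient $4$ emerge correctly; together with a clean verification that the degree/parity count really does annihilate the interior term and all terms of $\Phi$ except the single leading one.
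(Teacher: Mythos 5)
Your proposal follows essentially the same route as the paper's Section~\ref{Section:4}: the degree constraint $r+l-|\alpha|-j-k-1=-3$ with $r,l\le-1$ isolates the single case $r=l=-1$, $|\alpha|=j=k=0$, the boundary term collapses to the one integral of ${\rm trace}[\pi^+_{\xi_n}\sigma_{-1}(D_V^{-1})\,\partial_{\xi_n}\sigma_{-1}(D_V^{-1})]$, and the Clifford trace over the rank-$8$ bundle together with a $\xi_n$-residue at $\xi_n=i$ and the $S^1$ volume $2\pi$ give $4i\pi^2\,vol_{\partial M}$. The only minor differences are that you supply a parity argument for the vanishing of the interior term where the paper directly cites the odd-dimensional formula \eqref{b1} from \cite{Wa1}, and that you should pin down $\widehat{c}(V)^2=+|V|^2=+1$ (as in \eqref{ali1} and \eqref{eq188}) rather than hedging with $\mp$, since this sign feeds directly into the coefficient.
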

\indent The paper is organized in the following way. In Section \ref{Section:2}, by using the definition of the operators $\sqrt{-1}\widehat{c}(V)(d+\delta)$ and $-\sqrt{-1}(d+\delta)\widehat{c}(V)$, we compute the Lichnerowicz formulas for the operators $\sqrt{-1}\widehat{c}(V)(d+\delta)$ and $-\sqrt{-1}(d+\delta)\widehat{c}(V)$. In Section \ref{Section:3},
 we prove the Kastler-Kalau-Walze type theorem for 4-dimensional manifolds with boundary for the operators $\sqrt{-1}\widehat{c}(V)(d+\delta)$ and $-\sqrt{-1}(d+\delta)\widehat{c}(V)$. In Section \ref{Section:4}, we compute $\widetilde{Wres}[(\pi^+(D_V)^{-1})^2]$ for 3-dimensional oriented manifolds with boundary.
\section{The operator $\sqrt{-1}\widehat{c}(V)(d+\delta)$ and its Lichnerowicz formula}
\label{Section:2}
Firstly we introduce some notations about the operators $\sqrt{-1}\widehat{c}(V)(d+\delta)$ and $-\sqrt{-1}(d+\delta)\widehat{c}(V)$. Let $M$ be an $n$-dimensional ($n\geq 3$) oriented compact Riemannian manifold with a Riemannian metric $g^{TM}$.\\
\indent Let $\nabla^L$ be the Levi-Civita connection about $g^{TM}$. In the
fixed orthonormal frame $\{e_1,\cdots,e_n\}$, the connection matrix $(\omega_{s,t})$ is defined by
\begin{equation}
\label{eq1}
\nabla^L(e_1,\cdots,e_n)= (e_1,\cdots,e_n)(\omega_{s,t}).\\
\end{equation}
\indent Let $\epsilon (e_j^*)$,~$\iota (e_j^*)$ be the exterior and interior multiplications respectively, where $e_j^*=g^{TM}(e_j,\cdot)$. And $c(e_j)$ be the Clifford action,\\
write
\begin{equation}
\label{eq2}
\widehat{c}(e_j)=\epsilon (e_j^* )+\iota
(e_j^*);~~
c(e_j)=\epsilon (e_j^* )-\iota (e_j^* ),
\end{equation}
which satisfies
\begin{align}
\label{ali1}
&\widehat{c}(e_i)\widehat{c}(e_j)+\widehat{c}(e_j)\widehat{c}(e_i)=2g^{TM}(e_i,e_j);~~\nonumber\\
&c(e_i)c(e_j)+c(e_j)c(e_i)=-2g^{TM}(e_i,e_j);~~\nonumber\\
&c(e_i)\widehat{c}(e_j)+\widehat{c}(e_j)c(e_i)=0.\nonumber\\
\end{align}
By \cite{Y}, we have
\begin{align}
\label{ali2}
D&=d+\delta=\sum^n_{i=1}c(e_i)\bigg[e_i+\frac{1}{4}\sum_{s,t}\omega_{s,t}
(e_i)[\widehat{c}(e_s)\widehat{c}(e_t)
-c(e_s)c(e_t)]\bigg].
\end{align}
\indent Let $e_1,e_2\cdot\cdot\cdot,e_n$ be the orthonormal basis of $TM$, the operators ${D}_V$ and ${D_V}^*$ acting on $\bigwedge^*T^*M\bigotimes\mathbb{C}$ are defined by
\begin{align}
\label{ali3}
D_V&=\sqrt{-1}\widehat{c}(V)(d +\delta)\nonumber\\
&=\sqrt{-1}\widehat{c}(V)\sum^n_{i=1}c(e_i)\bigg[e_i+\frac{1}{4}\sum_{s,t}\omega_{s,t}
(e_i)[\widehat{c}(e_s)\widehat{c}(e_t)
-c(e_s)c(e_t)]\bigg],\nonumber\\
\end{align}
\begin{align}
\label{ali4}
{D_V}^*&=-\sqrt{-1}(d+\delta)\widehat{c}(V)\nonumber\\
&=-\sqrt{-1}\sum^n_{i=1}c(e_i)\bigg[e_i+\frac{1}{4}\sum_{s,t}\omega_{s,t}
(e_i)[\widehat{c}(e_s)\widehat{c}(e_t)
-c(e_s)c(e_t)]\bigg]\widehat{c}(V),\nonumber\\
\end{align}
where $V$ is a vector field, and $|V|=1$.\\
\indent Next, we get the following theorem about Lichnerowicz formulas,
\begin{thm}
\label{thm1} The following equalities hold:
\begin{align}
\label{ali5}
{D_V}^*D_V&=-\Big[g^{ij}(\nabla_{\partial_{i}}\nabla_{\partial_{j}}-
\nabla_{\nabla^{L}_{\partial_{i}}\partial_{j}})\Big]
-\frac{1}{8}\sum_{ijkl}R_{ijkl}\widehat{c}(e_i)\widehat{c}(e_j)
c(e_k)c(e_l)+\frac{1}{4}K;\nonumber\\
{D_V}^2&=-\Big[g^{ij}(\nabla_{\partial_{i}}\nabla_{\partial_{j}}-
\nabla_{\nabla^{L}_{\partial_{i}}\partial_{j}})\Big]
-\frac{1}{8}\sum_{ijkl}R_{ijkl}\widehat{c}(e_i)\widehat{c}(e_j)
c(e_k)c(e_l)+\frac{1}{4}K\nonumber\\
&+\frac{1}{4}\sum_{i=1}^n[\widehat{c}(V)\sum_{q=1}^n\widehat{c}(\nabla^L_{e_q}V)c(e_q)c(e_i)]^2-\frac{1}{2}\sum_{j=1}^n[\nabla^{\bigwedge^*T^*M}_{e_j}(\widehat{c}(V)\sum_{q=1}^n\widehat{c}(\nabla^L_{e_q}V)c(e_q))c(e_j)],\nonumber\\
\end{align}
where $K$ is the scalar curvature.
\end{thm}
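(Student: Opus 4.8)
The plan is to reduce both identities to the Weitzenb\"ock formula for the de Rham operator $D:=d+\delta$. By (\ref{ali2}), $D=\sum_i c(e_i)\nabla^{\bigwedge^*T^*M}_{e_i}$ is a Dirac-type operator for the Clifford action $c$ with Clifford connection $\nabla^{\bigwedge^*T^*M}$. The standard Bochner argument --- in a local orthonormal frame synchronous at the point, $D^2=-\sum_i(\nabla^{\bigwedge^*T^*M}_{e_i})^2+\sum_{i<j}c(e_i)c(e_j)R^{\bigwedge^*T^*M}(e_i,e_j)$ --- together with the curvature $R^{\bigwedge^*T^*M}(e_i,e_j)=\frac14\sum_{k,l}R_{ijkl}[\widehat{c}(e_k)\widehat{c}(e_l)-c(e_k)c(e_l)]$ of the exterior bundle and a reduction of the resulting four-fold Clifford products using (\ref{ali1}), the Riemann symmetries and the first Bianchi identity, gives
\[
D^2=-\Big[g^{ij}(\nabla_{\partial_i}\nabla_{\partial_j}-\nabla_{\nabla^L_{\partial_i}\partial_j})\Big]-\frac18\sum_{ijkl}R_{ijkl}\widehat{c}(e_i)\widehat{c}(e_j)c(e_k)c(e_l)+\frac14 K ,
\]
with $\nabla=\nabla^{\bigwedge^*T^*M}$. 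Since $\widehat{c}(V)^2=g^{TM}(V,V)=|V|^2=1$ as a bundle endomorphism, the definitions (\ref{ali3}) and (\ref{ali4}) give at once ${D_V}^*D_V=(d+\delta)\widehat{c}(V)^2(d+\delta)=(d+\delta)^2=D^2$, which is the first formula.

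For the second formula I would first reduce $D_V^2$ algebraically. Write $D_V=\sqrt{-1}\widehat{c}(V)D$, so $D_V^2=-\widehat{c}(V)D\widehat{c}(V)D$; the point is to commute $D$ past the inner $\widehat{c}(V)$. Metric compatibility of $\nabla^{\bigwedge^*T^*M}$ with $\nabla^L$ gives $\nabla^{\bigwedge^*T^*M}_{e_j}\widehat{c}(V)=\widehat{c}(\nabla^L_{e_j}V)+\widehat{c}(V)\nabla^{\bigwedge^*T^*M}_{e_j}$, and the relation $c(e_j)\widehat{c}(V)=-\widehat{c}(V)c(e_j)$ from (\ref{ali1}) then yields $D\widehat{c}(V)=\sum_j c(e_j)\widehat{c}(\nabla^L_{e_j}V)-\widehat{c}(V)D$. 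Substituting this, and using $\widehat{c}(V)^2=1$ with $c(e_j)\widehat{c}(\nabla^L_{e_j}V)=-\widehat{c}(\nabla^L_{e_j}V)c(e_j)$, one obtains
\[
D_V^2=D^2+\sum_{j}G\,c(e_j)\,\nabla^{\bigwedge^*T^*M}_{e_j},\qquad G:=\widehat{c}(V)\sum_{q}\widehat{c}(\nabla^L_{e_q}V)c(e_q).
\]

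Thus $D_V^2$ differs from the already Weitzenb\"ock-reduced $D^2$ only by a first-order term, which I would absorb into a Bochner Laplacian by passing to the twisted connection $\nabla^E_{e_i}:=\nabla^{\bigwedge^*T^*M}_{e_i}-\frac12 G\,c(e_i)$. Expanding $-g^{ij}(\nabla^E_{\partial_i}\nabla^E_{\partial_j}-\nabla^E_{\nabla^L_{\partial_i}\partial_j})$ in a synchronous frame (so that $\nabla^L_{e_i}e_i=0$, hence $\nabla^{\mathrm{End}}_{e_i}(G\,c(e_i))=(\nabla^{\bigwedge^*T^*M}_{e_i}G)c(e_i)$ there), its cross terms reproduce exactly the first-order term $\sum_j G\,c(e_j)\nabla^{\bigwedge^*T^*M}_{e_j}$, and collecting what remains, together with the curvature term of $D^2$, yields
\begin{align*}
D_V^2&=-g^{ij}(\nabla^E_{\partial_i}\nabla^E_{\partial_j}-\nabla^E_{\nabla^L_{\partial_i}\partial_j})-\frac18\sum_{ijkl}R_{ijkl}\widehat{c}(e_i)\widehat{c}(e_j)c(e_k)c(e_l)+\frac14 K\\
&\quad+\frac14\sum_i(G\,c(e_i))^2-\frac12\sum_j(\nabla^{\bigwedge^*T^*M}_{e_j}G)c(e_j),
\end{align*}
which is the second identity.

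The purely routine input is the Clifford algebra of the actions $c,\widehat{c}$ and of $\widehat{c}(V),\widehat{c}(\nabla^L_{e_q}V)$, where $g^{TM}(V,\nabla^L_{e_q}V)=\frac12 e_q(|V|^2)=0$ is used repeatedly. The main obstacle is the Weitzenb\"ock reduction for $d+\delta$ itself: obtaining the precise sign and the coefficient $-\frac18$ of the curvature term requires a careful manipulation of $\sum_{ijkl}R_{ijkl}c(e_i)c(e_j)c(e_k)c(e_l)$ via the first Bianchi identity and the Clifford relations. A secondary point is that the connection appearing in the $D_V^2$ identity is the twisted connection $\nabla^E$, not $\nabla^{\bigwedge^*T^*M}$, so the symbol $\nabla$ carries a different meaning in the two lines of the statement.
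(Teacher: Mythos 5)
Your proposal is correct and takes essentially the same route as the paper: both reduce to $D_V^*D_V=(d+\delta)^2$ via $\widehat{c}(V)^2=1$ together with the known Weitzenb\"ock formula (\ref{eq8}), and both use the identity $D_V^2=(d+\delta)^2+\widehat{c}(V)\sum_q\widehat{c}(\nabla^L_{e_q}V)c(e_q)(d+\delta)$ before absorbing the first-order term into the canonical connection of the Laplace-type operator. Your explicit twisted connection $\nabla^E=\nabla^{\bigwedge^*T^*M}-\frac{1}{2}Gc(\cdot)$ is precisely the connection $\omega_i$ that the paper extracts from the general formulas (\ref{eq5})--(\ref{eq4}) in normal coordinates, so the two arguments coincide in substance.
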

\begin{proof}
Let $M$ be a smooth compact oriented Riemannian $n$-dimensional manifolds without boundary and $N$ be a vector bundle on $M$. If $P$ is a differential operator of Laplace type, then it has locally the form
\begin{equation}\label{eq3}
P=-(g^{ij}\partial_i\partial_j+A^i\partial_i+B),
\end{equation}
where $\partial_{i}$  is a natural local frame on $TM$
and $(g^{ij})_{1\leq i,j\leq n}$ is the inverse matrix associated to the metric
matrix  $(g_{ij})_{1\leq i,j\leq n}$ on $M$,
 and $A^{i}$ and $B$ are smooth sections
of $\textrm{End}(N)$ on $M$ (endomorphism). If a Laplace type
operator $P$  satisfies (\ref{eq3}), then there is a unique
connection $\nabla$ on $N$ and a unique endomorphism $E$ such that
 \begin{equation}
 \label{eq4}
P=-\Big[g^{ij}(\nabla_{\partial_{i}}\nabla_{\partial_{j}}-
 \nabla_{\nabla^{L}_{\partial_{i}}\partial_{j}})+E\Big],
\end{equation}
where $\nabla^{L}$ is the Levi-Civita connection on $M$. Moreover
(with local frames of $T^{*}M$ and $N$), $\nabla_{\partial_{i}}=\partial_{i}+\omega_{i} $
and $E$ are related to $g^{ij}$, $A^{i}$ and $B$ through
 \begin{eqnarray}
 \label{eq5}
&&\omega_{i}=\frac{1}{2}g_{ij}\big(A^{i}+g^{kl}\Gamma_{ kl}^{j} \texttt{id}\big),\\
&&E=B-g^{ij}\big(\partial_{i}(\omega_{j})+\omega_{i}\omega_{j}-\omega_{k}\Gamma_{ ij}^{k} \big),
\end{eqnarray}
where $\Gamma_{ kl}^{j}$ is the  Christoffel coefficient of $\nabla^{L}$.\\
\indent Let $g^{ij}=g(dx_{i},dx_{j})$, $\xi=\sum_{j}\xi_{j}dx_{j}$ and $\nabla^L_{\partial_{i}}\partial_{j}=\sum_{k}\Gamma_{ij}^{k}\partial_{k}$,  we denote that
\begin{align}
\label{ali6}
&\sigma_{i}=-\frac{1}{4}\sum_{s,t}\omega_{s,t}
(e_i)c(e_s)c(e_t)
;~~~a_{i}=\frac{1}{4}\sum_{s,t}\omega_{s,t}
(e_i)\widehat{c}(e_s)\widehat{c}(e_t);\nonumber\\
&\xi^{j}=g^{ij}\xi_{i};~~~~\Gamma^{k}=g^{ij}\Gamma_{ij}^{k};~~~~\sigma^{j}=g^{ij}\sigma_{i};
~~~~a^{j}=g^{ij}a_{i}.
\end{align}
\indent Then the operators $D_V$ and ${D_V}^*$ can be written as
\begin{equation}
\label{eq6}
D_V=\sqrt{-1}\widehat{c}(V)\sum^n_{i=1}c(e_i)[e_i+a_{i}+\sigma_{i}];
\end{equation}
\begin{equation}\label{eq7}
{D_V}^*=-\sqrt{-1}\sum^n_{i=1}c(e_i)[e_i+a_{i}+\sigma_{i}]\widehat{c}(V).
\end{equation}
\indent By \cite{Ac} and  \cite{Y}, we have
\begin{equation}\label{eq8}
(d+\delta)^{2}
=-\triangle_{0}-\frac{1}{8}\sum_{ijkl}R_{ijkl}\widehat{c}(e_i)\widehat{c}(e_j)c(e_k)c(e_l)+\frac{1}{4}K;
\end{equation}
\begin{equation}
\label{eq9}
-\triangle_{0}=\Delta=-g^{ij}(\nabla^L_{i}\nabla^L_{j}-\Gamma_{ij}^{k}\nabla^L_{k}).
\end{equation}
\indent By (\ref{eq3}) and (\ref{eq4}), we have
\begin{align}\label{eq10}
{D_V}^*D_V&=(d+\delta)^2\nonumber\\
&=-\sum_{ij}g^{ij}\Big[\partial_{i}\partial_{j}+2\sigma_{i}\partial_{j}+2a_{i}\partial_{j}-\Gamma_{ij}^{k}\partial_{k}+(\partial_{i}\sigma_{j})
+(\partial_{i}a_{j})
+\sigma_{i}\sigma_{j}+\sigma_{i}a_{j}+a_{i}\sigma_{j}+a_{i}a_{j} -\Gamma_{ij}^{k}\sigma_{k}\nonumber\\
&-\Gamma_{ij}^{k}a_{k}\Big]
-\frac{1}{8}\sum_{ijkl}R_{ijkl}\widehat{c}(e_i)\widehat{c}(e_j)
c(e_k)c(e_l)+\frac{1}{4}K.\nonumber\\
\end{align}
\indent Similarly, by $d+\delta=\sum_{q=1}^nc(e_q)\nabla^{\bigwedge^*T^*M}_{e_q}$, we have
\begin{align}\label{eq11}
{D_V}^2&=(d+\delta)^2+\sum_{q=1}^n\widehat{c}(V)\widehat{c}(\nabla^L_{e_q}V)c(e_q)(d+\delta)\nonumber\\
&=-\sum_{ij}g^{ij}\Big[\partial_{i}\partial_{j}+2\sigma_{i}\partial_{j}+2a_{i}\partial_{j}-\Gamma_{ij}^{k}\partial_{k}+(\partial_{i}\sigma_{j})
+(\partial_{i}a_{j})
+\sigma_{i}\sigma_{j}+\sigma_{i}a_{j}+a_{i}\sigma_{j}+a_{i}a_{j} -\Gamma_{ij}^{k}\sigma_{k}\nonumber\\
&-\Gamma_{ij}^{k}a_{k}\Big]
+\sum_{ij}g^{ij}\Big[\widehat{c}(V)\sum_{q=1}^n\widehat{c}(\nabla^L_{e_q}V)c(e_q)c(\partial_i)\partial_j
+\widehat{c}(V)\sum_{q=1}^n\widehat{c}(\nabla^L_{e_q}V)c(e_q)c(\partial_i)\sigma_i+\widehat{c}(V)\sum_{q=1}^n\widehat{c}(\nabla^L_{e_q}V)c(e_q)\nonumber\\
&c(\partial_i)a_i\Big]-\frac{1}{8}\sum_{ijkl}R_{ijkl}\widehat{c}(e_i)\widehat{c}(e_j)
c(e_k)c(e_l)+\frac{1}{4}K.\nonumber\\
\end{align}
\indent By (\ref{ali6})-(\ref{eq11}), we have
\begin{align}\label{eq12}
(\omega_{i})_{{D_V}^*{D_V}}&=\sigma_{i}+a_{i},\nonumber\\
\end{align}
\begin{align}
E_{{D_V}^*D_V}&=\frac{1}{8}\sum_{ijkl}R_{ijkl}\widehat{c}(e_i)\widehat{c}(e_j)c(e_k)c(e_l)-\frac{1}{4}K.\nonumber\\
\end{align}
\indent Since $E$ is globally
defined on $M$, taking normal coordinates at $x_0$, we have
$\sigma^{i}(x_0)=0$, $a^{i}(x_0)=0$, $\partial^{j}[c(\partial_{j})](x_0)=0$,
$\Gamma^k(x_0)=0$, $g^{ij}(x_0)=\delta^j_i$, then
\begin{align}\label{eq13}
E_{{D_V}^*D_V}(x_0)&=\frac{1}{8}\sum_{ijkl}R_{ijkl}\widehat{c}(e_i)\widehat{c}(e_j)c(e_k)c(e_l)-\frac{1}{4}K.\nonumber\\
\end{align}
\indent Similarly, we have
\begin{align}\label{eq13c}
E_{{D_V}^2}(x_0)&=\frac{1}{8}\sum_{ijkl}R_{ijkl}\widehat{c}(e_i)\widehat{c}(e_j)
c(e_k)c(e_l)-\frac{1}{4}K-\frac{1}{4}\sum_{i=1}^n[\widehat{c}(V)\sum_{q=1}^n\widehat{c}(\nabla^L_{e_q}V)c(e_q)c(e_i)]^2\nonumber\\
&+\frac{1}{2}\sum_{j=1}^n[\nabla^{\bigwedge^*T^*M}_{e_j}(\widehat{c}(V)\sum_{q=1}^n\widehat{c}(\nabla^L_{e_q}V)c(e_q))c(e_j)],\nonumber\\
\end{align}
then by (\ref{eq4}), we get Theorem \ref{thm1}.\\
\end{proof}
\indent From \cite{Ac}, we konw that the noncommutative residue of a generalized laplacian $\overline{\Delta}$ is expressed as
\begin{equation}\label{eq14}
(n-2)\Phi_{2}(\overline{\Delta})=(4\pi)^{-\frac{n}{2}}\Gamma(\frac{n}{2})Wres(\overline{\Delta}^{-\frac{n}{2}+1}),
\end{equation}
where $\Phi_{2}(\overline{\Delta})$ denotes the integral over the diagonal part of the second
coefficient of the heat kernel expansion of $\overline{\Delta}$.
Now let $\overline{\Delta}={D_V}^*{D_V}$, since ${D_V}^*{D_V}$ is a generalized laplacian, we can suppose ${D_V}^*{D_V}=\overline{\Delta}-E_{{D_V}^*D_V}$, then we have
\begin{align}\label{eq15}
{\rm Wres}({D_V}^*D_V)^{-\frac{n-2}{2}}
=\frac{(n-2)(4\pi)^{\frac{n}{2}}}{(\frac{n}{2}-1)!}\int_{M}{\rm tr}(\frac{1}{6}K+E_{{D_V}^*D_V})d{\rm Vol_{M} },
\end{align}
\begin{align}\label{eq16}
{\rm Wres}({D_V}^2)^{-\frac{n-2}{2}}
=\frac{(n-2)(4\pi)^{\frac{n}{2}}}{(\frac{n}{2}-1)!}\int_{M}{\rm tr}(\frac{1}{6}K+E_{{D_V}^2})d{\rm Vol_{M} },
\end{align}
where ${\rm Wres}$ denotes the noncommutative residue.\\
Next, we need to compute ${\rm tr}(E_{{D_V}^*D_V})$ and ${\rm tr}(E_{{D_V}^2})$.
Obviously, we have\\
(1)\begin{align}\label{eq17q}
{\rm tr}\bigg(-\frac{1}{4}K\bigg)=-\frac{1}{4}K{\rm tr}[{\rm \texttt{id}}].\nonumber\\
\end{align}
(2)\begin{align}\label{eq17}
&\sum_{ijkl}{\rm tr}[R_{ijkl}\widehat{c}(e_i)\widehat{c}(e_j)
c(e_k)c(e_l)]=0.\nonumber\\
\end{align}
(3)By
 \begin{align}
 \label{eq188}
&\widehat{c}(V)\widehat{c}(\nabla^L_{e_q}V)=-\widehat{c}(\nabla^L_{e_q}V)\widehat{c}(V),~~~ \widehat{c}(V)c(e_q)=-c(e_q)\widehat{c}(V),\nonumber\\
&\widehat{c}(e_q)\widehat{c}(\nabla^L_{e_q}V)+\widehat{c}(\nabla^L_{e_q}V)\widehat{c}(e_q)=2g^{TM}(\nabla^L_{e_q}V,e_q),~~~(\widehat{c}(V))^2=|V|^2=1,\nonumber\\
&\widehat{c}(\nabla^L_{e_q}V)\widehat{c}(\nabla^L_{e_m}V)+\widehat{c}(\nabla^L_{e_m}V)\widehat{c}(\nabla^L_{e_q}V)=2g^{TM}(\nabla^L_{e_q}V,\nabla^L_{e_m}V),\nonumber\\
\end{align}
we also get\\
\begin{align}\label{eq18}
{\rm tr}\sum_{i=1}^n[\widehat{c}(V)\sum_{q=1}^n\widehat{c}(\nabla^L_{e_q}V)c(e_q)c(e_i)]^2&={\rm tr}\sum_{i=1}^n[\widehat{c}(V)\sum_{q=1}^n\widehat{c}(\nabla^L_{e_q}V)c(e_q)c(e_i)\widehat{c}(V)\sum_{m=1}^n\widehat{c}(\nabla^L_{e_m}V)c(e_m)c(e_i)]\nonumber\\
&=-{\rm tr}\sum_{i=1}^n[\sum_{q=1}^n\widehat{c}(\nabla^L_{e_q}V)c(e_q)c(e_i)\sum_{m=1}^n\widehat{c}(\nabla^L_{e_m}V)c(e_m)c(e_i)]\nonumber\\
&=-{\rm tr}\sum_{i=1}^n[\sum_{q=1}^nc(e_q)c(e_i)\widehat{c}(\nabla^L_{e_q}V)\sum_{m=1}^n\widehat{c}(\nabla^L_{e_m}V)c(e_m)c(e_i)]\nonumber\\
&=-2\sum_{i,q,m=1}^ng^{TM}(\nabla^L_{e_q}V,\nabla^L_{e_m}V){\rm tr}[c(e_q)c(e_i)c(e_m)c(e_i)]\nonumber\\
&+{\rm tr}\sum_{i=1}^n[\sum_{q=1}^nc(e_q)c(e_i)\sum_{m=1}^n\widehat{c}(\nabla^L_{e_m}V)\widehat{c}(\nabla^L_{e_q}V)c(e_m)c(e_i)],\nonumber\\
\end{align}
then we have
\begin{align}\label{eq19}
{\rm tr}\sum_{i=1}^n[\sum_{q=1}^n\widehat{c}(\nabla^L_{e_q}V)c(e_q)c(e_i)\sum_{m=1}^n\widehat{c}(\nabla^L_{e_m}V)c(e_m)c(e_i)]=\sum_{i,q,m=1}^ng^{TM}(\nabla^L_{e_q}V,\nabla^L_{e_m}V){\rm tr}[c(e_q)c(e_i)c(e_m)c(e_i)],\nonumber\\
\end{align}
and by $c(e_i)c(e_m)+c(e_m)c(e_i)=-2g^{TM}(e_m,e_i)$, we have
\begin{align}\label{eq20}
&{\rm tr}\sum_{i=1}^n[\sum_{q=1}^n\widehat{c}(\nabla^L_{e_q}V)c(e_q)c(e_i)\sum_{m=1}^n\widehat{c}(\nabla^L_{e_m}V)c(e_m)c(e_i)]\nonumber\\
&=-2\sum_{i,q,m=1}^ng^{TM}(\nabla^L_{e_q}V,\nabla^L_{e_m}V)\delta_{mi}{\rm tr}[c(e_q)c(e_i)]+n\sum_{q,m=1}^ng^{TM}(\nabla^L_{e_q}V,\nabla^L_{e_m}V){\rm tr}[c(e_q)c(e_m)]\nonumber\\
&=-(n-2)\sum_{q=1}^n|\nabla^L_{e_q}V|^2{\rm tr}[{\rm \texttt{id}}],\nonumber\\
\end{align}
\begin{align}
\label{vvv}
{\rm tr}\sum_{i=1}^n[\widehat{c}(V)\sum_{q=1}^n\widehat{c}(\nabla^L_{e_q}V)c(e_q)c(e_i)]^2=(n-2)\sum_{q=1}^n|\nabla^L_{e_q}V|^2{\rm tr}[{\rm \texttt{id}}].\nonumber\\
\end{align}
(4)By $\nabla^{\bigwedge^*T^*M}_{e_j}(\alpha\beta)=(\nabla^{\bigwedge^*T^*M}_{e_j}\alpha)\beta+\alpha(\nabla^{\bigwedge^*T^*M}_{e_j}\beta),$ we have
\begin{align}\label{eq24}
&{\rm tr}\sum_{j=1}^n[\nabla^{\bigwedge^*T^*M}_{e_j}(\widehat{c}(V)\sum_{q=1}^n\widehat{c}(\nabla^L_{e_q}V)c(e_q))c(e_j)]\nonumber\\
&={\rm tr}\sum_{j=1}^n[\nabla^{\bigwedge^*T^*M}_{e_j}(\widehat{c}(V))\sum_{q=1}^n\widehat{c}(\nabla^L_{e_q}V)c(e_q)c(e_j)]+{\rm tr}\sum_{j=1}^n[\widehat{c}(V)\sum_{q=1}^n\nabla^{\bigwedge^*T^*M}_{e_j}(\widehat{c}(\nabla^L_{e_q}V))c(e_q)c(e_j)]\nonumber\\
&+{\rm tr}\sum_{j=1}^n[\widehat{c}(V)\sum_{q=1}^n\widehat{c}(\nabla^L_{e_q}V)\nabla^{\bigwedge^*T^*M}_{e_j}(c(e_q))c(e_j)].\nonumber\\
\end{align}
By
\begin{align}
&\nabla^{\bigwedge^*T^*M}_{e_j}(\widehat{c}(V))=\widehat{c}(\nabla^L_{e_j}V),~~~ \widehat{c}(V)\widehat{c}(\nabla^L_{e_j}(\nabla^L_{e_q}V))+\widehat{c}(\nabla^L_{e_j}(\nabla^L_{e_q}V))\widehat{c}(V)=2g^{TM}(V,\nabla^L_{e_j}(\nabla^L_{e_q}V)),\nonumber\\ &\nabla^{\bigwedge^*T^*M}_{e_j}(\widehat{c}(\nabla^L_{e_q}V)=\widehat{c}(\nabla^L_{e_j}(\nabla^L_{e_q}V)),~~~ g^{TM}(\nabla^L_{e_q}\nabla^L_{e_q}V,V)+g^{TM}(\nabla^L_{e_q}V,\nabla^L_{e_q}V)=e_q(g^{TM}(\nabla^L_{e_q}V,V))=0,\nonumber\\ &\widehat{c}(V)\widehat{c}(\nabla^L_{e_q}V)+\widehat{c}(\nabla^L_{e_q}V)\widehat{c}(V)=0,
~~~\nabla^{\bigwedge^*T^*M}_{e_j}(c(e_q))=c(\nabla^L_{e_j}e_q),\nonumber\\
\end{align}
we get\\
(4-a)\begin{align}\label{eq25}
&{\rm tr}\sum_{j=1}^n[\nabla^{\bigwedge^*T^*M}_{e_j}(\widehat{c}(V))\sum_{q=1}^n\widehat{c}(\nabla^L_{e_q}V)c(e_q)c(e_j)]\nonumber\\
&={\rm tr}\sum_{j=1}^n[\widehat{c}(\nabla^L_{e_j}V)\sum_{q=1}^n\widehat{c}(\nabla^L_{e_q}V)c(e_q)c(e_j)]\nonumber\\
&=2\sum_{j,q=1}^ng^{TM}(\nabla^L_{e_j}V,\nabla^L_{e_q}V){\rm tr}[c(e_q)c(e_j)]-\sum_{j,q=1}^n{\rm tr}[\widehat{c}(\nabla^L_{e_q}V)\widehat{c}(\nabla^L_{e_j}V)c(e_q)c(e_j)],\nonumber\\
\end{align}
similar to (\ref{eq20}), we have
\begin{align}\label{eq26}
{\rm tr}\sum_{j=1}^n[\widehat{c}(\nabla^L_{e_j}V)\sum_{q=1}^n\widehat{c}(\nabla^L_{e_q}V)c(e_q)c(e_j)]=-\sum_{q=1}^n|\nabla^L_{e_q}V|^2{\rm tr}[{\rm \texttt{id}}].\nonumber\\
\end{align}
(4-b)
\begin{align}\label{eq29}
&{\rm tr}\sum_{j=1}^n[\widehat{c}(V)\sum_{q=1}^n\nabla^{\bigwedge^*T^*M}_{e_j}(\widehat{c}(\nabla^L_{e_q}V))c(e_q)c(e_j)]\nonumber\\
&=\sum_{j,q=1}^n{\rm tr}[\widehat{c}(V)\widehat{c}(\nabla^L_{e_j}\nabla^L_{e_q}V)c(e_q)c(e_j)]\nonumber\\
&=2\sum_{j,q=1}^ng^{TM}(\nabla^L_{e_j}\nabla^L_{e_q}V,V){\rm tr}[c(e_q)c(e_j)]-{\rm tr}\sum_{j=1}^n[\sum_{q=1}^n\widehat{c}(\nabla^L_{e_j}\nabla^L_{e_q}V)\widehat{c}(V)c(e_q)c(e_j)],\nonumber\\
\end{align}
then,
\begin{align}\label{c}
\sum_{j,q=1}^n{\rm tr}[\widehat{c}(V)\widehat{c}(\nabla^L_{e_j}\nabla^L_{e_q}V)c(e_q)c(e_j)]&=\sum_{j,q=1}^ng^{TM}(\nabla^L_{e_j}\nabla^L_{e_q}V,V){\rm tr}[c(e_q)c(e_j)]\nonumber\\
&=-\sum_{q=1}^ng^{TM}(\nabla^L_{e_q}\nabla^L_{e_q}V,V){\rm tr}[{\rm \texttt{id}}]\nonumber\\
&=\sum_{q=1}^n|\nabla^L_{e_q}V|^2[{\rm \texttt{id}}].\nonumber\\
\end{align}
(4-c)
\begin{align}\label{eq29}
{\rm tr}\sum_{j=1}^n[\widehat{c}(V)\sum_{q=1}^n\widehat{c}(\nabla^L_{e_q}V)\nabla^{\bigwedge^*T^*M}_{e_j}(c(e_q))c(e_j)]&={\rm tr}\sum_{j=1}^n[\widehat{c}(V)\sum_{q=1}^n\widehat{c}(\nabla^L_{e_q}V)c(\nabla^L_{e_j}e_q)c(e_j)]=0.\nonumber\\
\end{align}
Then, by (\ref{eq26}), (\ref{c}) and (\ref{eq29}), we have
\begin{align}\label{eq219}
{\rm tr}\sum_{j=1}^n[\nabla^{\bigwedge^*T^*M}_{e_j}(\widehat{c}(V)\sum_{q=1}^n\widehat{c}(\nabla^L_{e_q}V)c(e_q))c(e_j)]&=0.\nonumber\\
\end{align}
Therefore, we get
\begin{align}\label{eq31}
&{\rm tr}(E_{{D_V}^*D_V})=-\frac{K}{4}{\rm tr}[{\rm \texttt{id}}],\\
&{\rm tr}(E_{{D_V}^2})=\left(-\frac{K}{4}-\frac{n-2}{4}\sum_{q=1}^n|\nabla^L_{e_q}V|^2\right){\rm tr}[{\rm \texttt{id}}].\nonumber\\
\end{align}
Then by (\ref{eq15}) and (\ref{eq16}), we have the following theorem,
\begin{thm}\label{thm2} If $M$ is a $n$-dimensional compact oriented manifold without boundary, and $n$ is even, then we get the following equalities :
\begin{align}\label{eq32}
&{\rm Wres}({D_V}^*D_V)^{-\frac{n-2}{2}}
=\frac{(n-2)(4\pi)^{\frac{n}{2}}}{(\frac{n}{2}-1)!}\int_{M}2^n\bigg(-\frac{1}{12}K\bigg)d{\rm Vol_{M}},\nonumber\\
&{\rm Wres}({D_V}^2)^{-\frac{n-2}{2}}
=\frac{(n-2)(4\pi)^{\frac{n}{2}}}{(\frac{n}{2}-1)!}\int_{M}2^n
\bigg(-\frac{1}{12}K-\frac{n-2}{4}\sum_{q=1}^n|\nabla^L_{e_q}V|^2\bigg)d{\rm Vol_{M}},
\end{align}
where $K$ is the scalar curvature.
\end{thm}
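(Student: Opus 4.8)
The plan is to feed the endomorphisms identified in Theorem \ref{thm1} (see (\ref{eq13}) and (\ref{eq13c})) into the general heat-kernel identities (\ref{eq15}) and (\ref{eq16}). Since $D_V^*D_V$ and $D_V^2$ are generalized Laplacians on $\bigwedge^*T^*M\otimes\mathbb{C}$, whose fibre has dimension $2^n$ (so ${\rm tr}[{\rm id}]=2^n$), the only thing still to be computed is the pointwise fibre trace ${\rm tr}(\frac{1}{6}K+E)$ for each of the two endomorphisms; once that is in hand, substitution into (\ref{eq15})--(\ref{eq16}), together with $\Gamma(n/2)=(n/2-1)!$ for $n$ even, gives (\ref{eq32}) immediately.

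For $E_{D_V^*D_V}=\frac{1}{8}\sum_{ijkl}R_{ijkl}\widehat{c}(e_i)\widehat{c}(e_j)c(e_k)c(e_l)-\frac{1}{4}K$ the curvature term is traceless: by the antisymmetry of $R_{ijkl}$ in $i,j$ and in $k,l$ only the monomials with $i\neq j$ and $k\neq l$ contribute, and such a monomial is a nontrivial $\widehat{c}$-monomial times a nontrivial $c$-monomial, hence has vanishing fibre trace on $\bigwedge^*T^*M$ — this is (\ref{eq17}). Therefore ${\rm tr}(\frac{1}{6}K+E_{D_V^*D_V})=2^n(\frac{1}{6}-\frac{1}{4})K=2^n(-\frac{1}{12}K)$, which yields the first line of (\ref{eq32}).

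For $E_{D_V^2}$ there are, in addition, the two $V$-dependent terms of (\ref{eq13c}). For $-\frac{1}{4}\sum_i[\widehat{c}(V)\sum_q\widehat{c}(\nabla^L_{e_q}V)c(e_q)c(e_i)]^2$ I would use the Clifford relations (\ref{eq188}): because $|V|=1$ forces $g^{TM}(\nabla^L_{e_q}V,V)=0$, the factor $\widehat{c}(V)$ anticommutes with every $\widehat{c}(\nabla^L_{e_q}V)$ and every $c(e_q)$, so the two copies of $\widehat{c}(V)$ can be pushed together and cancelled via $\widehat{c}(V)^2=1$, and the fibre trace collapses — using $c(e_i)c(e_m)+c(e_m)c(e_i)=-2g^{TM}(e_m,e_i)$ as in (\ref{eq20}) — to $(n-2)\sum_q|\nabla^L_{e_q}V|^2{\rm tr}[{\rm id}]$, which is (\ref{vvv}). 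For $\frac{1}{2}\sum_j[\nabla^{\bigwedge^*T^*M}_{e_j}(\widehat{c}(V)\sum_q\widehat{c}(\nabla^L_{e_q}V)c(e_q))c(e_j)]$ I would expand by the Leibniz rule into three pieces via $\nabla^{\bigwedge^*T^*M}_{e_j}\widehat{c}(V)=\widehat{c}(\nabla^L_{e_j}V)$, $\nabla^{\bigwedge^*T^*M}_{e_j}\widehat{c}(\nabla^L_{e_q}V)=\widehat{c}(\nabla^L_{e_j}\nabla^L_{e_q}V)$ and $\nabla^{\bigwedge^*T^*M}_{e_j}c(e_q)=c(\nabla^L_{e_j}e_q)$: the piece carrying $c(\nabla^L_{e_j}e_q)$ is traceless, and the remaining two pieces cancel against one another — the crucial input being that differentiating $|V|^2=1$ twice gives $g^{TM}(\nabla^L_{e_q}\nabla^L_{e_q}V,V)=-|\nabla^L_{e_q}V|^2$ — so this term contributes $0$, which is (\ref{eq219}). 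Hence ${\rm tr}(E_{D_V^2})=(-\frac{K}{4}-\frac{n-2}{4}\sum_{q=1}^n|\nabla^L_{e_q}V|^2){\rm tr}[{\rm id}]$ as in (\ref{eq31}), so ${\rm tr}(\frac{1}{6}K+E_{D_V^2})=2^n(-\frac{1}{12}K-\frac{n-2}{4}\sum_{q=1}^n|\nabla^L_{e_q}V|^2)$, which gives the second line of (\ref{eq32}).

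The one place where real care is needed is the fibre-trace bookkeeping for the $V$-terms of $E_{D_V^2}$: one must track the signs produced when sliding $\widehat{c}(V)$ past the Clifford generators $c(e_q)$ (they \emph{anticommute}), and one must use both identities forced by $|V|=1$, namely $g^{TM}(V,\nabla^L V)=0$ and $g^{TM}(V,\nabla^L\nabla^L V)=-|\nabla^L V|^2$. Everything else rests on the routine facts that a Clifford monomial involving a nontrivial $\widehat{c}$-factor or an odd $c$-part is traceless on $\bigwedge^*T^*M$, and that ${\rm tr}[{\rm id}]=2^n$.
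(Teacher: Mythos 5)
Your proposal is correct and follows essentially the same route as the paper: it plugs $E_{D_V^*D_V}$ and $E_{D_V^2}$ from Theorem \ref{thm1} into (\ref{eq15})--(\ref{eq16}), discards the traceless curvature term as in (\ref{eq17}), and evaluates the two $V$-dependent fibre traces exactly as the paper does in (\ref{eq18})--(\ref{eq219}) (pushing $\widehat{c}(V)$ through and cancelling it via $\widehat{c}(V)^2=1$, then using $g^{TM}(V,\nabla^L V)=0$ and $g^{TM}(V,\nabla^L\nabla^L V)=-|\nabla^L V|^2$ so that the Leibniz pieces (4-a) and (4-b) cancel). The only difference is stylistic compression; the key identities and the bookkeeping of signs are precisely those used in the paper.
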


\section{A Kastler-Kalau-Walze type theorem for $4$-dimensional manifolds with boundary}
\label{Section:3}
 In this section, we prove the Kastler-Kalau-Walze type theorem for $4$-dimensional oriented compact manifolds with boundary. We firstly recall some basic facts and formulas about Boutet de
Monvel's calculus and the definition of the noncommutative residue for manifolds with boundary which will be used in the following. For more details, see in Section 2 in \cite{Wa3}.\\
 \indent Let $M$ be a 4-dimensional compact oriented manifold with boundary $\partial M$.
We assume that the metric $g^{TM}$ on $M$ has the following form near the boundary,
\begin{equation}\label{a1}
g^{TM}=\frac{1}{h(x_{n})}g^{\partial M}+dx _{n}^{2},
\end{equation}
where $g^{\partial M}$ is the metric on $\partial M$ and $h(x_n)\in C^{\infty}([0, 1)):=\{\widehat{h}|_{[0,1)}|\widehat{h}\in C^{\infty}((-\varepsilon,1))\}$ for
some $\varepsilon>0$ and $h(x_n)$ satisfies $h(x_n)>0$, $h(0)=1$ where $x_n$ denotes the normal directional coordinate. Let $U\subset M$ be a collar neighborhood of $\partial M$ which is diffeomorphic with $\partial M\times [0,1)$. By the definition of $h(x_n)\in C^{\infty}([0,1))$
and $h(x_n)>0$, there exists $\widehat{h}\in C^{\infty}((-\varepsilon,1))$ such that $\widehat{h}|_{[0,1)}=h$ and $\widehat{h}>0$ for some
sufficiently small $\varepsilon>0$. Then there exists a metric $g'$ on $\widetilde{M}=M\bigcup_{\partial M}\partial M\times
(-\varepsilon,0]$ which has the form on $U\bigcup_{\partial M}\partial M\times (-\varepsilon,0 ]$
\begin{equation}\label{a2}
g'=\frac{1}{\widehat{h}(x_{n})}g^{\partial M}+dx _{n}^{2} ,
\end{equation}
such that $g'|_{M}=g$. We fix a metric $g'$ on the $\widetilde{M}$ such that $g'|_{M}=g$.

Let the Fourier transformation $F'$  be
\begin{equation*}
F':L^2({\bf R}_t)\rightarrow L^2({\bf R}_v);~F'(u)(v)=\int_\mathbb{R} e^{-ivt}u(t)dt
\end{equation*}
and let
let
\begin{equation*}
r^{+}:C^\infty ({\bf R})\rightarrow C^\infty (\widetilde{{\bf R}^+});~ f\rightarrow f|\widetilde{{\bf R}^+};~
\widetilde{{\bf R}^+}=\{x\geq0;x\in {\bf R}\}.
\end{equation*}
\indent We define $H^+=F'(\Phi(\widetilde{{\bf R}^+}));~ H^-_0=F'(\Phi(\widetilde{{\bf R}^-}))$ which satisfies
$H^+\bot H^-_0$, where $\Phi(\widetilde{{\bf R}^+}) =r^+\Phi({\bf R})$, $\Phi(\widetilde{{\bf R}^-}) =r^-\Phi({\bf R})$ and $\Phi({\bf R})$
denotes the Schwartz space. We have the following
 property: $h\in H^+~$ (resp. $H^-_0$) if and only if $h\in C^\infty({\bf R})$ which has an analytic extension to the lower (resp. upper) complex
half-plane $\{{\rm Im}\xi<0\}$ (resp. $\{{\rm Im}\xi>0\})$ such that for all nonnegative integer $l$,
 \begin{equation*}
\frac{d^{l}h}{d\xi^l}(\xi)\sim\sum^{\infty}_{k=1}\frac{d^l}{d\xi^l}(\frac{c_k}{\xi^k}),
\end{equation*}
as $|\xi|\rightarrow +\infty,{\rm Im}\xi\leq0$ (resp. ${\rm Im}\xi\geq0)$ and where $c_k\in\mathbb{C}$ are some constants.\\
 \indent Let $H'$ be the space of all polynomials and $H^-=H^-_0\bigoplus H';~H=H^+\bigoplus H^-.$ Denote by $\pi^+$ (resp. $\pi^-$) respectively the
 projection on $H^+$ (resp. $H^-$). Let $\widetilde H=\{$rational functions having no poles on the real axis$\}$. Then on $\tilde{H}$,
 \begin{equation}\label{a3}
\pi^+h(\xi_0)=\frac{1}{2\pi i}\lim_{u\rightarrow 0^{-}}\int_{\Gamma^+}\frac{h(\xi)}{\xi_0+iu-\xi}d\xi,
\end{equation}
where $\Gamma^+$ is a Jordan close curve
included ${\rm Im}(\xi)>0$ surrounding all the singularities of $h$ in the upper half-plane and
$\xi_0\in {\bf R}$. In our computations, we only compute $\pi^+h$ for $h$ in $\widetilde{H}$. Similarly, define $\pi'$ on $\tilde{H}$,
\begin{equation}\label{a4}
\pi'h=\frac{1}{2\pi}\int_{\Gamma^+}h(\xi)d\xi.
\end{equation}
So $\pi'(H^-)=0$. For $h\in H\bigcap L^1({\bf R})$, $\pi'h=\frac{1}{2\pi}\int_{{\bf R}}h(v)dv$ and for $h\in H^+\bigcap L^1({\bf R})$, $\pi'h=0$.\\
\indent An operator of order $m\in {\bf Z}$ and type $d$ is a matrix\\
$$\widetilde{A}=\left(\begin{array}{lcr}
  \pi^+P+G  & K  \\
   T  &  \widetilde{S}
\end{array}\right):
\begin{array}{cc}
\   C^{\infty}(M,E_1)\\
 \   \bigoplus\\
 \   C^{\infty}(\partial{M},F_1)
\end{array}
\longrightarrow
\begin{array}{cc}
\   C^{\infty}(M,E_2)\\
\   \bigoplus\\
 \   C^{\infty}(\partial{M},F_2)
\end{array},
$$
where $M$ is a manifold with boundary $\partial M$ and
$E_1,E_2$~ (resp. $F_1,F_2$) are vector bundles over $M~$ (resp. $\partial M
$).~Here,~$P:C^{\infty}_0(\Omega,\overline {E_1})\rightarrow
C^{\infty}(\Omega,\overline {E_2})$ is a classical
pseudodifferential operator of order $m$ on $\Omega$, where
$\Omega$ is a collar neighborhood of $M$ and
$\overline{E_i}|M=E_i~(i=1,2)$. $P$ has an extension:
$~{\cal{E'}}(\Omega,\overline {E_1})\rightarrow
{\cal{D'}}(\Omega,\overline {E_2})$, where
${\cal{E'}}(\Omega,\overline {E_1})~({\cal{D'}}(\Omega,\overline
{E_2}))$ is the dual space of $C^{\infty}(\Omega,\overline
{E_1})~(C^{\infty}_0(\Omega,\overline {E_2}))$. Let
$e^+:C^{\infty}(M,{E_1})\rightarrow{\cal{E'}}(\Omega,\overline
{E_1})$ denote extension by zero from $M$ to $\Omega$ and
$r^+:{\cal{D'}}(\Omega,\overline{E_2})\rightarrow
{\cal{D'}}(\Omega, {E_2})$ denote the restriction from $\Omega$ to
$X$, then define
$$\pi^+P=r^+Pe^+:C^{\infty}(M,{E_1})\rightarrow {\cal{D'}}(\Omega,
{E_2}).$$ In addition, $P$ is supposed to have the
transmission property; this means that, for all $j,k,\alpha$, the
homogeneous component $p_j$ of order $j$ in the asymptotic
expansion of the
symbol $p$ of $P$ in local coordinates near the boundary satisfies:\\
$$\partial^k_{x_n}\partial^\alpha_{\xi'}p_j(x',0,0,+1)=
(-1)^{j-|\alpha|}\partial^k_{x_n}\partial^\alpha_{\xi'}p_j(x',0,0,-1),$$
then $\pi^+P:C^{\infty}(M,{E_1})\rightarrow C^{\infty}(M,{E_2})$
by [12]. Let $G$,$T$ be respectively the singular Green operator
and the trace operator of order $m$ and type $d$. Let $K$ be a
potential operator and $S$ be a classical pseudodifferential
operator of order $m$ along the boundary (For detailed definition,
see [11]). Denote by $B^{m,d}$ the collection of all operators of
order $m$
and type $d$,  and $\mathcal{B}$ is the union over all $m$ and $d$.\\
\indent Recall that $B^{m,d}$ is a Fr\'{e}chet space. The composition
of the above operator matrices yields a continuous map:
$B^{m,d}\times B^{m',d'}\rightarrow B^{m+m',{\rm max}\{
m'+d,d'\}}.$ Write $$\widetilde{A}=\left(\begin{array}{lcr}
 \pi^+P+G  & K \\
 T  &  \widetilde{S}
\end{array}\right)
\in B^{m,d},
 \widetilde{A}'=\left(\begin{array}{lcr}
\pi^+P'+G'  & K'  \\
 T'  &  \widetilde{S}'
\end{array} \right)
\in B^{m',d'}.$$\\
 The composition $\widetilde{A}\widetilde{A}'$ is obtained by
multiplication of the matrices(For more details see [12]). For
example $\pi^+P\circ G'$ and $G\circ G'$ are singular Green
operators of type $d'$ and
$$\pi^+P\circ\pi^+P'=\pi^+(PP')+L(P,P').$$
Here $PP'$ is the usual
composition of pseudodifferential operators and $L(P,P')$ called
leftover term is a singular Green operator of type $m'+d$. For our case, $P,P'$ are classical pseudo differential operators, in other words $\pi^+P\in \mathcal{B}^{\infty}$ and $\pi^+P'\in \mathcal{B}^{\infty}$ .\\
\indent Let $M$ be a $n$-dimensional compact oriented manifold with boundary $\partial M$.
Denote by $\mathcal{B}$ the Boutet de Monvel's algebra. We recall that the main theorem in \cite{FGLS,Wa3}.
\begin{thm}\label{th:32}{\rm\cite{FGLS}}{\bf(Fedosov-Golse-Leichtnam-Schrohe)}
 Let $M$ and $\partial M$ be connected, ${\rm dim}M=n\geq3$, and let $\widetilde{S}$ (resp. $\widetilde{S}'$) be the unit sphere about $\xi$ (resp. $\xi'$) and $\sigma(\xi)$ (resp. $\sigma(\xi')$) be the corresponding canonical
$n-1$ (resp. $(n-2)$) volume form.
 Set $\widetilde{A}=\left(\begin{array}{lcr}\pi^+P+G &   K \\
T &  \widetilde{S}    \end{array}\right)$ $\in \mathcal{B}$ , and denote by $p$, $b$ and $s$ the local symbols of $P,G$ and $\widetilde{S}$ respectively.
 Define:
 \begin{align}
{\rm{\widetilde{Wres}}}(\widetilde{A})&=\int_X\int_{\bf \widetilde{ S}}{\rm{tr}}_E\left[p_{-n}(x,\xi)\right]\sigma(\xi)dx \nonumber\\
&+2\pi\int_ {\partial X}\int_{\bf \widetilde{S}'}\left\{{\rm tr}_E\left[({\rm{tr}}b_{-n})(x',\xi')\right]+{\rm{tr}}
_F\left[s_{1-n}(x',\xi')\right]\right\}\sigma(\xi')dx',
\end{align}
where ${\rm{\widetilde{Wres}}}$ denotes the noncommutative residue of an operator in the Boutet de Monvel's algebra.\\
Then~~ a) ${\rm \widetilde{Wres}}([\widetilde{A},B])=0 $, for any
$\widetilde{A},B\in\mathcal{B}$;~~ b) It is the unique continuous trace on
$\mathcal{B}/\mathcal{B}^{-\infty}$.
\end{thm}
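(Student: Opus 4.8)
The plan is to follow the strategy of Fedosov--Golse--Leichtnam--Schrohe \cite{FGLS}, establishing the trace property a) and the uniqueness b) in turn; in both, the closed-manifold arguments of Wodzicki and Guillemin serve as a template and the genuinely new input is the analysis of the boundary symbol algebra, i.e. the operator-valued symbols on $T^*\partial M$ with values in operators on the half-line ${\bf R}^+$ generated by $\pi^+p$ together with the symbols of the singular Green, potential and trace operators. For a), I would first reduce to a local statement: since $\widetilde{{\rm Wres}}(\widetilde A)$ is the integral over $S^*M$ of the fibre trace of the $(-n)$-homogeneous interior symbol of $P$, plus the integral over $S^*\partial M$ of the fibre traces of the $(-n)$-homogeneous boundary symbol of $G$ and the $(1-n)$-homogeneous symbol of $\widetilde S$, and all of these are linear in $\widetilde A$, it suffices to show that for a commutator $[\widetilde A,B]$ each density is, after the fibre trace, a sum of $\partial_\xi$-derivatives and $\partial_x$-divergences, hence integrates to zero over the relevant cosphere bundle by Stokes.

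For the interior ($\pi^+P$) block this is the classical identity ${\rm Wres}([P,Q])=0$: expanding $p\#q-q\#p$ by the Leibniz rule, its degree $(-n)$ component is manifestly $\partial_{\xi_j}(\cdots)+\partial_{x_j}(\cdots)$. The crucial point on a manifold with boundary is that $\pi^+P\circ\pi^+Q=\pi^+(PQ)+L(P,Q)$, so the interior residue alone is \emph{not} a trace and the leftover singular Green operator $L(P,Q)$ must be balanced against the boundary term. The technical heart of a) is thus a careful bookkeeping inside Boutet de Monvel's calculus of how the boundary symbols of the four block types transform under the mixed compositions --- pseudodifferential with Green, potential with trace, and so on --- using the projections $\pi^{\pm}$ of (\ref{a3})--(\ref{a4}); one shows that in each mixed commutator the boundary density is again a total derivative on $T^*\partial M$, now also producing $\partial_{x_n}$-boundary contributions at $x_n=0$, which is precisely where the potential and trace terms enter, so that the boundary integral vanishes as well. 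An alternative and arguably cleaner route is spectral: fix an invertible elliptic $\widetilde Q\in\mathcal B$ of positive order, form $\widetilde Q^{-s}$, and show $s\mapsto{\rm Tr}(\widetilde A\,\widetilde Q^{-s})$ is meromorphic with at worst a simple pole at $s=0$ whose residue is a universal multiple of $\widetilde{{\rm Wres}}(\widetilde A)$ independent of $\widetilde Q$; traciality is then automatic since ${\rm Tr}([\widetilde A,B]\,\widetilde Q^{-s})={\rm Tr}(\widetilde A\,[B,\widetilde Q^{-s}])$ and $[B,\widetilde Q^{-s}]$ is $s$ times a holomorphic family, killing the pole --- but the pole structure on a manifold with boundary rests on the same boundary-symbol analysis.

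For b), let $\tau$ be any continuous trace on $\mathcal B/\mathcal B^{-\infty}$. It annihilates the closure of the commutator subspace, so it is enough to prove that $\mathcal B$ modulo $\overline{[\mathcal B,\mathcal B]}+\mathcal B^{-\infty}$ is one-dimensional, spanned by the class of any element with nonzero residue; together with a) this forces $\tau=c\,\widetilde{{\rm Wres}}$ with $c=\tau(\widetilde A)/\widetilde{{\rm Wres}}(\widetilde A)$ for a chosen $\widetilde A$ with $\widetilde{{\rm Wres}}(\widetilde A)\neq0$. One-dimensionality is proved by symbol peeling: given $\widetilde A$ with $\widetilde{{\rm Wres}}(\widetilde A)=0$, the vanishing-residue condition on the top homogeneous interior and boundary symbols is exactly the obstruction to realizing that symbol as a commutator (the classical ``division by the Euler vector field''), so one picks $\widetilde A_j,B_j\in\mathcal B$ with $[\widetilde A_j,B_j]$ matching the leading symbol of $\widetilde A$, subtracts, and iterates, summing the corrections asymptotically using completeness of the Fr\'echet spaces $B^{m,d}$; this gives $\widetilde A\equiv\sum_j[\widetilde A_j,B_j]$ modulo $\mathcal B^{-\infty}$. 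For the interior symbols this is Wodzicki's construction; the new content is the same statement for the boundary symbol algebra over $T^*\partial M$, which again reduces to a pseudodifferential-type calculus on ${\bf R}^+$. Connectedness of $M$ and of $\partial M$ is used here to exclude extra locally constant trace functionals.

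The main obstacle throughout is the boundary symbol algebra: one must (i) isolate the correct notion of residue density for the operator-valued symbols on $T^*\partial M$, compatibly with composition; (ii) prove it becomes a total derivative on commutators, for a); and (iii) prove that the vanishing of this density is the only obstruction to being a commutator, for b). The Mellin transform in the normal variable $x_n$, reducing everything to operators on ${\bf R}^+$, together with the transmission property and the $\pi^{\pm}$ calculus of (\ref{a3})--(\ref{a4}), is the engine behind all three, and is where essentially all the genuinely new difficulty --- beyond the closed case --- is concentrated.
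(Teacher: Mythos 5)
This statement is not proved in the paper at all: it is the Fedosov--Golse--Leichtnam--Schrohe theorem, quoted verbatim from \cite{FGLS} as background for the definition of $\widetilde{{\rm Wres}}$, and the authors use it as a black box (only the defining formula and property a) are ever invoked, in (\ref{a7})--(\ref{a8})). So there is no in-paper argument to compare yours against; the only fair comparison is with the original FGLS proof, and on that score your outline does reproduce its architecture: locality and linearity of the residue density, the classical Wodzicki mechanism (degree $-n$ part of a commutator symbol is a sum of $\partial_\xi$- and $\partial_x$-derivatives) for the interior block, the observation that $\pi^+P\circ\pi^+P'=\pi^+(PP')+L(P,P')$ forces the interior term to be compensated by a boundary density, and uniqueness via showing the commutator subspace of $\mathcal B/\mathcal B^{-\infty}$ has codimension one, with connectedness excluding extra locally constant traces.

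That said, as a proof it remains a plan rather than an argument: the three points you yourself flag as the ``main obstacle'' are exactly the content of the theorem, and they are asserted, not carried out. Concretely, you never define the boundary residue density for the singular Green, trace and potential blocks (in the statement it is ${\rm tr}_E[({\rm tr}\,b_{-n})(x',\xi')]$, where the inner ${\rm tr}$ is the trace of the operator-valued symbol acting on functions of the normal variable, plus the $s_{1-n}$ term), you do not verify that this density is compatible with the mixed compositions of the calculus, and you do not show that the leftover term $L(P,P')$ contributes precisely the term that makes the total residue of a commutator vanish, nor where the normalization $2\pi$ comes from. The alternative ``spectral'' route via ${\rm Tr}(\widetilde A\,\widetilde Q^{-s})$ likewise presupposes the meromorphic extension and pole structure on a manifold with boundary, which is again the hard part. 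So the proposal is a correct reconstruction of the strategy of \cite{FGLS}, but it does not constitute an independent proof, and since the paper itself offers none, the honest conclusion is that both the paper and your write-up ultimately defer the substance of this theorem to the original reference.
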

\begin{defn}\label{def1}{\rm\cite{Wa3} }
Lower dimensional volumes of spin manifolds with boundary are defined by
 \begin{equation}\label{a6}
{\rm Vol}^{(p_1,p_2)}_nM:= \widetilde{{\rm Wres}}[\pi^+D^{-p_1}\circ\pi^+D^{-p_2}],
\end{equation}
\end{defn}
 By \cite{Wa3}, we get
\begin{equation}\label{a7}
\widetilde{{\rm Wres}}[\pi^+D^{-p_1}\circ\pi^+D^{-p_2}]=\int_M\int_{|\xi'|=1}{\rm
trace}_{\wedge^*T^*M\bigotimes\mathbb{C}}[\sigma_{-n}(D^{-p_1-p_2})]\sigma(\xi)dx+\int_{\partial M}\Phi,
\end{equation}
and
\begin{eqnarray}\label{a8}
\Phi&=\int_{|\xi'|=1}\int^{+\infty}_{-\infty}\sum^{\infty}_{j, k=0}\sum\frac{(-i)^{|\alpha|+j+k+1}}{\alpha!(j+k+1)!}
\times {\rm trace}_{\wedge^*T^*M\bigotimes\mathbb{C}}[\partial^j_{x_n}\partial^\alpha_{\xi'}\partial^k_{\xi_n}\sigma^+_{r}(D^{-p_1})(x',0,\xi',\xi_n)
\nonumber\\
&\times\partial^\alpha_{x'}\partial^{j+1}_{\xi_n}\partial^k_{x_n}\sigma_{l}(D^{-p_2})(x',0,\xi',\xi_n)]d\xi_n\sigma(\xi')dx',
\end{eqnarray}
 where the sum is taken over $r+l-k-|\alpha|-j-1=-n,~~r\leq -p_1,l\leq -p_2$.

 Since $[\sigma_{-n}(D^{-p_1-p_2})]|_M$ has the same expression as $\sigma_{-n}(D^{-p_1-p_2})$ in the case of manifolds without
boundary, so locally we can compute the first term by \cite{KW}, \cite{Ka}, \cite{Po},  \cite{Wa3}.

For any fixed point $x_0\in\partial M$, we choose the normal coordinates
$U$ of $x_0$ in $\partial M$ (not in $M$) and compute $\Phi(x_0)$ in the coordinates $\widetilde{U}=U\times [0,1)\subset M$ and the
metric $\frac{1}{h(x_n)}g^{\partial M}+dx_n^2.$ The dual metric of $g^{TM}$ on $\widetilde{U}$ is ${h(x_n)}g^{\partial M}+dx_n^2.$  Write
$g^{TM}_{ij}=g^{TM}(\frac{\partial}{\partial x_i},\frac{\partial}{\partial x_j});~ g_{TM}^{ij}=g^{TM}(dx_i,dx_j)$, then

\begin{equation}\label{a9}
[g^{TM}_{ij}]= \left[\begin{array}{lcr}
  \frac{1}{h(x_n)}[g_{ij}^{\partial M}]  & 0  \\
   0  &  1
\end{array}\right];~~~
[g_{TM}^{ij}]= \left[\begin{array}{lcr}
  h(x_n)[g^{ij}_{\partial M}]  & 0  \\
   0  &  1
\end{array}\right],
\end{equation}
and
\begin{equation}\label{a10}
\partial_{x_s}g_{ij}^{\partial M}(x_0)=0, 1\leq i,j\leq n-1; ~~~g_{ij}^{TM}(x_0)=\delta_{ij}.
\end{equation}
\indent From \cite{Wa3}, we can get the following three lemmas,
\begin{lem}{\rm \cite{Wa3}}\label{lem1}
With the metric $g^{TM}$ on $M$ near the boundary
\begin{align}\label{a11}
\partial_{x_j}(|\xi|_{g^{TM}}^2)(x_0)&=\left\{
       \begin{array}{c}
        0,  ~~~~~~~~~~ ~~~~~~~~~~ ~~~~~~~~~~~~~{\rm if }~j<n, \\[2pt]
       h'(0)|\xi'|^{2}_{g^{\partial M}},~~~~~~~~~~~~~~~~~~~~{\rm if }~j=n,
       \end{array}
    \right. \\
\partial_{x_j}(c(\xi))(x_0)&=\left\{
       \begin{array}{c}
      0,  ~~~~~~~~~~ ~~~~~~~~~~ ~~~~~~~~~~~~~{\rm if }~j<n,\\[2pt]
\partial_{x_n}(c(\xi'))(x_{0}), ~~~~~~~~~~~~~~~~~{\rm if }~j=n,
       \end{array}
    \right.
\end{align}
where $\xi=\xi'+\xi_{n}dx_{n}$.
\end{lem}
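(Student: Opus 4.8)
The lemma is quoted from \cite{Wa3}, and I would reprove it by a direct computation in the coordinate chart $\widetilde{U}=U\times[0,1)$ around $x_0$, using only the block form (\ref{a9}) of the metric and the normal-coordinate normalization (\ref{a10}). The guiding observation is that in this chart all of the $x$-dependence of $|\xi|^2_{g^{TM}}$ and of $c(\xi)$ enters solely through the scalar $h(x_n)$ and through the boundary metric components $g^{ij}_{\partial M}(x')$, while $\xi=\xi'+\xi_n dx_n$ is held fixed in coordinates.

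First I would treat $|\xi|^2_{g^{TM}}$. By (\ref{a9}), for $\xi=\xi'+\xi_n dx_n$ one has
$$|\xi|^2_{g^{TM}}=\sum_{i,j}g_{TM}^{ij}\xi_i\xi_j=h(x_n)\sum_{i,j<n}g^{ij}_{\partial M}(x')\xi_i\xi_j+\xi_n^2=h(x_n)\,|\xi'|^2_{g^{\partial M}}+\xi_n^2.$$
Differentiating in $x_j$ with $j<n$ annihilates $\xi_n^2$ and the factor $h(x_n)$, leaving $h(x_n)\sum_{i,k<n}(\partial_{x_j}g^{ik}_{\partial M})(x')\xi_i\xi_k$; this vanishes at $x_0$ because $U$ is a normal-coordinate chart of $x_0$ in $\partial M$, so $\partial_{x_j}g^{ik}_{\partial M}(x_0)=0$ by (\ref{a10}). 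Differentiating in $x_n$ gives $h'(x_n)\,|\xi'|^2_{g^{\partial M}}$, and evaluating at $x_0$ yields $h'(0)\,|\xi'|^2_{g^{\partial M}}$ since $h(0)=1$ and $g^{ij}_{\partial M}(x_0)=\delta_{ij}$.

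Next, for $c(\xi)$ I would decompose $c(\xi)=c(\xi')+\xi_n c(dx_n)$ by linearity of the Clifford action and write $c(\eta)=\epsilon(\eta)-\iota(\eta^{\#})$ for a one-form $\eta$, with $\eta^{\#}$ its $g^{TM}$-dual vector. The exterior parts $\epsilon(\xi')$ and $\epsilon(dx_n)$ are $x$-independent in this chart; the interior parts carry the metric dependence. From the block form (\ref{a9}) one has $(dx_n)^{\#}=\partial_n$ throughout $\widetilde{U}$, so $c(dx_n)$ is $x$-independent and $\partial_{x_j}[\xi_n c(dx_n)]=0$ for every $j$. Meanwhile $(\xi')^{\#}=h(x_n)\sum_{i,k<n}g^{ik}_{\partial M}(x')\xi_k\,\partial_i$, so exactly as above its $x_j$-derivative for $j<n$ involves $\partial_{x_j}g^{ik}_{\partial M}(x')$ and hence vanishes at $x_0$, while its $x_n$-derivative is $h'(x_n)\sum_{i,k<n}g^{ik}_{\partial M}(x')\xi_k\,\partial_i$, which is precisely $\partial_{x_n}(c(\xi'))$ expressed through the interior term. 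Collecting the pieces gives $\partial_{x_j}(c(\xi))(x_0)=0$ for $j<n$ and $\partial_{x_j}(c(\xi))(x_0)=\partial_{x_n}(c(\xi'))(x_0)$ for $j=n$.

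The computation is entirely routine; the only point requiring a little care is the bookkeeping of the metric-dependent interior-multiplication term in the Clifford action, and making sure the normal-coordinate identity (\ref{a10}) is invoked only for the boundary components $g^{ij}_{\partial M}$ and not for the conformal factor $h$. Since the statement is already established in \cite{Wa3}, no new idea is needed.
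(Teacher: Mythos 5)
Your proof is correct and is the standard direct coordinate computation; the paper itself does not reprove the lemma but cites \cite{Wa3}, whose argument is exactly this: expand $|\xi|^2_{g^{TM}}=h(x_n)|\xi'|^2_{g^{\partial M}}+\xi_n^2$ and $c(\xi)=\epsilon(\xi)-\iota(\xi^{\#})$ in the collar chart, observe that all $x$-dependence sits in $h(x_n)$ and in $g^{ij}_{\partial M}(x')$ (the latter through the interior-multiplication term), and then invoke $\partial_{x_j}g^{ij}_{\partial M}(x_0)=0$ and $h(0)=1$. Your treatment of the Clifford action, separating the $x$-independent exterior part from the metric-dependent interior part and noting $(dx_n)^{\#}=\partial_n$ identically on $\widetilde{U}$, is exactly the bookkeeping needed, so there is nothing to add.
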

\begin{lem}{\rm \cite{Wa3}}\label{lem2}With the metric $g^{TM}$ on $M$ near the boundary
\begin{align}\label{a12}
\omega_{s,t}(e_i)(x_0)&=\left\{
       \begin{array}{c}
        \omega_{n,i}(e_i)(x_0)=\frac{1}{2}h'(0),  ~~~~~~~~~~ ~~~~~~~~~~~{\rm if }~s=n,t=i,i<n, \\[2pt]
       \omega_{i,n}(e_i)(x_0)=-\frac{1}{2}h'(0),~~~~~~~~~~~~~~~~~~~{\rm if }~s=i,t=n,i<n,\\[2pt]
    \omega_{s,t}(e_i)(x_0)=0,~~~~~~~~~~~~~~~~~~~~~~~~~~~other~cases,~~~~~~~~~
       \end{array}
    \right.
\end{align}
where $(\omega_{s,t})$ denotes the connection matrix of Levi-Civita connection $\nabla^L$.
\end{lem}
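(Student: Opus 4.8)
The plan is to read off the connection one-forms $\omega_{s,t}$ at $x_0$ directly from the Koszul formula, once a boundary-adapted orthonormal frame is fixed so that the whole computation reduces to evaluating a handful of Lie brackets at $x_0$. No curvature enters, so this is a purely local bookkeeping argument.

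\emph{The frame.} Near $\partial M$, using the coordinates $(x',x_n)$ of this section with the metric (\ref{a1}) and with $x'$ chosen to be normal coordinates for $g^{\partial M}$ centred at $x_0$ (so that (\ref{a10}) holds), I would set $e_n=\partial_{x_n}$ and $e_i=\sqrt{h(x_n)}\,f_i$ for $1\le i\le n-1$, where $\{f_1,\dots,f_{n-1}\}$ is a $g^{\partial M}$-orthonormal frame on a neighbourhood of $x_0$ in $\partial M$, regarded as $x_n$-independent vector fields on $\widetilde{U}$. By (\ref{a9}) this frame is $g^{TM}$-orthonormal, and the normal-coordinate choice on $\partial M$ lets us arrange $f_i(x_0)=\partial_{x_i}$ together with the vanishing at $x_0$ of the $g^{\partial M}$-Christoffel symbols, hence of $[f_i,f_j]$. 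Two facts about this frame get used repeatedly: $f_i$ is tangent to the slices $\{x_n=\text{const}\}$, so $f_i(x_n)\equiv0$ and $[f_i,\partial_{x_n}]=0$.

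\emph{Reduction to brackets.} Because $g^{TM}(e_a,e_b)$ is constant, metric compatibility of $\nabla^L$ gives $\omega_{s,t}=-\omega_{t,s}$ and reduces the Koszul formula to
\begin{equation*}
2\,\omega_{s,t}(e_i)=g^{TM}\big([e_i,e_t],e_s\big)-g^{TM}\big([e_i,e_s],e_t\big)-g^{TM}\big([e_t,e_s],e_i\big).
\end{equation*}
The two facts above give, using $h(0)=1$, that $[e_i,e_j]=h(x_n)[f_i,f_j]$ vanishes at $x_0$ for $i,j<n$, while $[e_i,e_n]=-\tfrac{h'(x_n)}{2h(x_n)}e_i$, so $[e_i,e_n]\big|_{x_0}=-\tfrac12 h'(0)\,e_i\big|_{x_0}$ for $i<n$. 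Substituting these brackets into the displayed identity and running over $(s,t,i)$ finishes the proof: if $s,t,i<n$ every bracket vanishes; if $i=n$ the two surviving terms cancel since the Kronecker deltas coincide; and the only configuration that survives is $t=n$, $s=i<n$ (and, by skew-symmetry, $s=n$, $t=i<n$), which gives $\omega_{i,n}(e_i)(x_0)=-\tfrac12 h'(0)$ and $\omega_{n,i}(e_i)(x_0)=\tfrac12 h'(0)$, with every other component zero. This is exactly (\ref{a12}).

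I do not expect a genuine obstacle here: the content is a curvature-free local computation. The points that need care are (i) verifying that the $x_n$-independent boundary frame $\{f_i\}$ can be chosen with vanishing first-order data at $x_0$ — which is precisely what (\ref{a10}) encodes — and (ii) keeping the $\sqrt{h(x_n)}$ factors and the vanishing of $f_i(x_n)$ straight when expanding the brackets. One could instead compute the Christoffel symbols of $g^{TM}$ from (\ref{a9}) and evaluate $\nabla^L_{e_i}e_t$ in the coordinate frame; this gives the same values but is a bit longer.
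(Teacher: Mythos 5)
The paper does not prove this lemma itself; it is stated as a quoted result from \cite{Wa3}, so there is no internal proof to compare against. Your proposal is therefore assessed on its own terms, and it is correct: the boundary-adapted orthonormal frame $e_n=\partial_{x_n}$, $e_i=\sqrt{h(x_n)}\,f_i$ is the standard one for the product-like metric (\ref{a1}), the reduction of the Koszul formula to
\[
2\,\omega_{s,t}(e_i)=g\big([e_i,e_t],e_s\big)-g\big([e_i,e_s],e_t\big)-g\big([e_t,e_s],e_i\big)
\]
is the right one for an orthonormal frame with the sign convention $\omega_{s,t}(e_i)=g(\nabla^L_{e_i}e_t,e_s)$ implicit in (\ref{eq1}), and the only two nonzero brackets at $x_0$, namely $[e_i,e_n]|_{x_0}=-\tfrac12h'(0)e_i$ for $i<n$ and its negative, reproduce (\ref{a12}) exactly. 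I verified the case analysis: $s,t,i<n$ gives $0$; $i=n$, $s,t<n$ gives $\tfrac12h'(0)(\delta_{ts}-\delta_{st})=0$; $s=n$, $t=j$, $i\ne j$ gives a $\delta_{ij}$ factor, hence $0$; and $s=n$, $t=i<n$ gives $\tfrac12h'(0)$ as claimed. The one point you flag yourself, that the $g^{\partial M}$-orthonormal frame $\{f_i\}$ can be chosen with $[f_i,f_j](x_0)=0$, is indeed guaranteed by (\ref{a10}) together with a Gram--Schmidt construction from the coordinate frame, since the orthonormalising coefficients then have vanishing first derivatives at $x_0$; it would be worth saying that last sentence explicitly rather than asserting the Christoffel vanishing implies the bracket vanishing.
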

\begin{lem}{\rm \cite{Wa3}}\label{lem3}When $i<n,$ then
\begin{align}
\label{a13}
\Gamma_{st}^k(x_0)&=\left\{
       \begin{array}{c}
        \Gamma^n_{ii}(x_0)=\frac{1}{2}h'(0),~~~~~~~~~~ ~~~~~~~~~~~{\rm if }~s=t=i,k=n, \\[2pt]
        \Gamma^i_{ni}(x_0)=-\frac{1}{2}h'(0),~~~~~~~~~~~~~~~~~~~{\rm if }~s=n,t=i,k=i,\\[2pt]
        \Gamma^i_{in}(x_0)=-\frac{1}{2}h'(0),~~~~~~~~~~~~~~~~~~~{\rm if }~s=i,t=n,k=i,\\[2pt]
       \end{array}
    \right.
\end{align}
in other cases, $\Gamma_{st}^i(x_0)=0$.
\end{lem}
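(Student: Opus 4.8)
The plan is to read off the Christoffel symbols directly from the Koszul formula
\[
\Gamma_{st}^k=\frac{1}{2}\sum_l g_{TM}^{kl}\big(\partial_{x_s}g^{TM}_{tl}+\partial_{x_t}g^{TM}_{sl}-\partial_{x_l}g^{TM}_{st}\big),
\]
by substituting the explicit block form (\ref{a9}) of the metric near $\partial M$ together with the normal-coordinate normalization (\ref{a10}) on $\partial M$, and then sorting the cases according to whether each of the indices $s,t,k,l$ equals $n$ or not.

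The first step is to record which first derivatives of $g^{TM}_{ij}$ are nonzero at $x_0$. Since $h$ depends on $x_n$ only and $\partial_{x_s}g^{\partial M}_{ij}(x_0)=0$ for $s<n$, differentiating $g^{TM}_{ij}=h(x_n)^{-1}g^{\partial M}_{ij}$ for $i,j<n$ gives $\partial_{x_s}g^{TM}_{ij}(x_0)=0$ whenever $s<n$, while in the normal direction $\partial_{x_n}g^{TM}_{ij}(x_0)=-h'(0)h(0)^{-2}g^{\partial M}_{ij}(x_0)=-h'(0)\delta_{ij}$ because $h(0)=1$; the components $g^{TM}_{in}\equiv 0$ and $g^{TM}_{nn}\equiv 1$ have all their derivatives equal to zero. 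One also has $g_{TM}^{kl}(x_0)=\delta_{kl}$, and $g_{TM}^{kn}=\delta_{kn}$ identically.

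The second step is to feed these data into the Koszul formula. For $s=t=i<n$ and $k=n$ the only surviving contribution is $-\partial_{x_n}g^{TM}_{ii}$ pulled through $g_{TM}^{nn}=1$, so $\Gamma^n_{ii}(x_0)=-\tfrac{1}{2}\partial_{x_n}g^{TM}_{ii}(x_0)=\tfrac{1}{2}h'(0)$. For $s=n$, $t=i<n$, $k=i$ (and, by the symmetry of $\Gamma$ in its lower indices, for $s=i$, $t=n$, $k=i$) the two terms carrying $g^{TM}_{ni}$ cancel against each other and vanish anyway, leaving $\Gamma^i_{ni}(x_0)=\Gamma^i_{in}(x_0)=\tfrac{1}{2}\partial_{x_n}g^{TM}_{ii}(x_0)=-\tfrac{1}{2}h'(0)$. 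In every remaining index pattern the three derivatives in the bracket either force $l=n$ while $k<n$, in which case $g_{TM}^{kn}=0$ kills the term, or reduce to $\partial_{x_n}g^{TM}_{ij}(x_0)=-h'(0)\delta_{ij}$ with $i\neq j$, or are derivatives in a boundary direction or derivatives of the constant components $g^{TM}_{in},g^{TM}_{nn}$; all of these are zero, so $\Gamma_{st}^k(x_0)=0$ in all other cases, which is the assertion.

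There is no real obstacle here: once the above data are in place, the lemma is a one-line case check, and it is exactly the computation carried out in \cite{Wa3}. The only point that needs attention is conceptual rather than technical — normal coordinates for $g^{\partial M}$ at $x_0$ are \emph{not} normal coordinates for $g^{TM}$ on $M$, so one must not assume that every first derivative of $g^{TM}$ vanishes at $x_0$; precisely the normal derivatives $\partial_{x_n}g^{TM}_{ij}(x_0)=-h'(0)\delta_{ij}$ persist, and the whole content of the lemma is tracking which term of the Koszul sum, and which entry of the inverse metric, picks this surviving derivative up in each index configuration.
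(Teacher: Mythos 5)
Your computation is correct and is the standard direct verification: write the Christoffel symbols via the Koszul formula, differentiate the block-diagonal metric $g^{TM}=h(x_n)^{-1}g^{\partial M}+dx_n^2$ using $h(0)=1$, $g^{\partial M}_{ij}(x_0)=\delta_{ij}$ and $\partial_{x_s}g^{\partial M}_{ij}(x_0)=0$ for $s<n$ (equation (\ref{a10})), and sort by whether $s,t,k$ equal $n$. The only surviving first derivative is $\partial_{x_n}g^{TM}_{ij}(x_0)=-h'(0)\delta_{ij}$ for $i,j<n$, and your three nonzero cases and the vanishing of all others follow. The paper itself states this lemma without proof, citing \cite{Wa3}, so there is no internal argument to compare against; yours is the natural computation one would carry out there as well, and your closing caveat — that normal coordinates for $g^{\partial M}$ at $x_0$ are not normal coordinates for $g^{TM}$, so the $x_n$-derivatives of the metric survive and must be tracked — is exactly the point that makes the lemma nontrivial.
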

\indent By (\ref{a7}) and (\ref{a8}), we firstly compute
\begin{equation}\label{a14}
\widetilde{{\rm Wres}}[\pi^+{D_V}^{-1}\circ\pi^+({D_V}^*)^{-1}]=\int_M\int_{|\xi'|=1}{\rm
trace}_{\wedge^*T^*M\bigotimes\mathbb{C}}[\sigma_{-4}(({D_V}^*{D_V})^{-1})]\sigma(\xi)dx+\int_{\partial M}\Phi,
\end{equation}
where
\begin{align}\label{a15}
\Phi &=\int_{|\xi'|=1}\int^{+\infty}_{-\infty}\sum^{\infty}_{j, k=0}\sum\frac{(-i)^{|\alpha|+j+k+1}}{\alpha!(j+k+1)!}
\times {\rm trace}_{\wedge^*T^*M\bigotimes\mathbb{C}}[\partial^j_{x_n}\partial^\alpha_{\xi'}\partial^k_{\xi_n}\sigma^+_{r}({D_V}^{-1})(x',0,\xi',\xi_n)
\nonumber\\
&\times\partial^\alpha_{x'}\partial^{j+1}_{\xi_n}\partial^k_{x_n}\sigma_{l}(({D_V}^*)^{-1})(x',0,\xi',\xi_n)]d\xi_n\sigma(\xi')dx',
\end{align}
and the sum is taken over $r+l-k-j-|\alpha|=-3,~~r\leq -1,l\leq-1$.\\

\indent By Theorem \ref{thm2}, we can compute the interior of $\widetilde{{\rm Wres}}[\pi^+{D_V}^{-1}\circ\pi^+({D_V}^*)^{-1}]$, so
\begin{align}\label{a16}
&\int_M\int_{|\xi'|=1}{\rm
trace}_{\wedge^*T^*M}[\sigma_{-4}(({D_V}^*D_V)^{-1})]\sigma(\xi)dx=32\pi^2\int_{M}\bigg(-\frac{4}{3}K\bigg)d{\rm Vol_{M}}.
\end{align}

\indent Now we  need to compute $\int_{\partial M} \Phi$. Since, some operators have the following symbols.
\begin{lem}\label{lem4} The following identities hold:
\begin{align}
\sigma_1({D_V})&=\sigma_1({D_V}^*)=-\widehat{c}(V)c(\xi); \nonumber\\ \sigma_0({D_V})&=\frac{i\widehat{c}(V)}{4}\bigg(\sum_{i,s,t}\omega_{s,t}(e_i)c(e_i)\widehat{c}(e_s)\widehat{c}(e_t)
-\sum_{i,s,t}\omega_{s,t}(e_i)c(e_i)c(e_s)c(e_t)\bigg); \nonumber\\
\sigma_0({D_V}^*)&=\frac{i\widehat{c}(V)}{4}\bigg(\sum_{i,s,t}\omega_{s,t}(e_i)c(e_i)\widehat{c}(e_s)\widehat{c}(e_t)
-\sum_{i,s,t}\omega_{s,t}(e_i)c(e_i)c(e_s)c(e_t)\bigg)-i\sum_{q=1}^nc(e_q)\widehat{c}(\nabla^L_{e_q}V).\nonumber\\
\end{align}
\end{lem}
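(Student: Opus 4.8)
The plan is to read the symbols directly off the defining formulas \eqref{ali3} and \eqref{ali4}, using the convention (as in \cite{Wa3}) that the symbol of $\partial_{x_j}$ is $\sqrt{-1}\xi_j$, so that $d+\delta=\sum_i c(e_i)\nabla^{\bigwedge^*T^*M}_{e_i}$ has principal symbol $\sqrt{-1}c(\xi)$, and the elementary fact that any endomorphism-valued multiplication operator contributes only to the order-$0$ part of the symbol.

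First I would treat $D_V$. Since $D_V=\sqrt{-1}\widehat{c}(V)(d+\delta)$ is literally the composition of the order-$0$ multiplication operator $\sqrt{-1}\widehat{c}(V)$ with the first order operator $d+\delta$, its full symbol equals $\sqrt{-1}\widehat{c}(V)$ times the full symbol of $d+\delta$, with no lower-order corrections. Writing $d+\delta=\sum_i c(e_i)[e_i+a_i+\sigma_i]$ as in \eqref{ali2}, with $a_i,\sigma_i$ the order-$0$ pieces introduced in \eqref{ali6}, one has $\sigma_1(d+\delta)=\sqrt{-1}c(\xi)$ and $\sigma_0(d+\delta)=\sum_i c(e_i)(a_i+\sigma_i)$. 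Multiplying by $\sqrt{-1}\widehat{c}(V)$ gives $\sigma_1(D_V)=-\widehat{c}(V)c(\xi)$ and, after substituting the definitions $a_i=\frac14\sum_{s,t}\omega_{s,t}(e_i)\widehat{c}(e_s)\widehat{c}(e_t)$ and $\sigma_i=-\frac14\sum_{s,t}\omega_{s,t}(e_i)c(e_s)c(e_t)$, exactly the stated formula for $\sigma_0(D_V)$.

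For $D_V^*$ the key observation is the operator identity $D_V^*=D_V-\sqrt{-1}\sum_{q=1}^n c(e_q)\widehat{c}(\nabla^L_{e_q}V)$. To see it, expand $D_V^*=-\sqrt{-1}(d+\delta)\widehat{c}(V)=-\sqrt{-1}\sum_q c(e_q)\,\nabla^{\bigwedge^*T^*M}_{e_q}\!\circ\widehat{c}(V)$, apply the Leibniz rule together with $\nabla^{\bigwedge^*T^*M}_{e_q}(\widehat{c}(V))=\widehat{c}(\nabla^L_{e_q}V)$ (already used in Section \ref{Section:2}) to write $\nabla^{\bigwedge^*T^*M}_{e_q}\!\circ\widehat{c}(V)=\widehat{c}(\nabla^L_{e_q}V)+\widehat{c}(V)\circ\nabla^{\bigwedge^*T^*M}_{e_q}$, and then use $c(e_q)\widehat{c}(V)=-\widehat{c}(V)c(e_q)$ from \eqref{ali1} to recognize $-\sqrt{-1}\sum_q c(e_q)\widehat{c}(V)\nabla^{\bigwedge^*T^*M}_{e_q}=\sqrt{-1}\widehat{c}(V)(d+\delta)=D_V$. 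Since $\sum_q c(e_q)\widehat{c}(\nabla^L_{e_q}V)$ is an order-$0$ multiplication operator, passing to symbols immediately yields $\sigma_1(D_V^*)=\sigma_1(D_V)=-\widehat{c}(V)c(\xi)$ and $\sigma_0(D_V^*)=\sigma_0(D_V)-\sqrt{-1}\sum_q c(e_q)\widehat{c}(\nabla^L_{e_q}V)$, which is the claimed expression. An equivalent but more laborious route is to expand \eqref{ali4} directly as $-\sqrt{-1}\sum_i c(e_i)[e_i+a_i+\sigma_i]\widehat{c}(V)$ and commute $\widehat{c}(V)$ to the left term by term; both give the same answer.

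The whole argument is essentially bookkeeping. The only point demanding care is the $D_V^*$ case: one must correctly move the differentiation past $\widehat{c}(V)$ — this is precisely what produces the extra summand $-i\sum_q c(e_q)\widehat{c}(\nabla^L_{e_q}V)$ — and keep the anticommutation relations between $c(\cdot)$ and $\widehat{c}(\cdot)$ straight. I expect this to be the main (minor) obstacle; everything else follows from reading off coefficients of the expansions in Section \ref{Section:2}.
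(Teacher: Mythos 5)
Your computation is correct and is essentially the argument the paper intends: the paper states Lemma \ref{lem4} without proof, and your reading of the symbols off (\ref{ali3})--(\ref{ali6}) for $D_V$, together with the Leibniz/anticommutation identity $D_V^*=D_V-\sqrt{-1}\sum_q c(e_q)\widehat{c}(\nabla^L_{e_q}V)$, is exactly the manipulation the paper itself performs (for $D_V^2$) in (\ref{eq11}). Nothing is missing; in particular your observation that left composition with the order-zero endomorphism $\sqrt{-1}\widehat{c}(V)$ multiplies the full symbol without lower-order corrections correctly justifies the $D_V$ case, and your operator identity cleanly produces the extra term $-i\sum_q c(e_q)\widehat{c}(\nabla^L_{e_q}V)$ in $\sigma_0(D_V^*)$.
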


\indent Write
 \begin{eqnarray}\label{a17}
D_x^{\alpha}&=(-i)^{|\alpha|}\partial_x^{\alpha};
~\sigma(D_V)=p_1+p_0;
~(\sigma(D_V)^{-1})=\sum^{\infty}_{j=1}q_{-j}.
\end{eqnarray}

\indent By the composition formula of pseudodifferential operators, we have
\begin{align}\label{a18}
1=\sigma(D_V\circ {D_V}^{-1})&=\sum_{\alpha}\frac{1}{\alpha!}\partial^{\alpha}_{\xi}[\sigma({D_V})]
{D}_x^{\alpha}[\sigma({D_V}^{-1})]\nonumber\\
&=(p_1+p_0)(q_{-1}+q_{-2}+q_{-3}+\cdots)\nonumber\\
&~~~+\sum_j(\partial_{\xi_j}p_1+\partial_{\xi_j}p_0)(
D_{x_j}q_{-1}+D_{x_j}q_{-2}+D_{x_j}q_{-3}+\cdots)\nonumber\\
&=p_1q_{-1}+(p_1q_{-2}+p_0q_{-1}+\sum_j\partial_{\xi_j}p_1D_{x_j}q_{-1})+\cdots,
\end{align}
so
\begin{equation}\label{a19}
q_{-1}=p_1^{-1};~q_{-2}=-p_1^{-1}[p_0p_1^{-1}+\sum_j\partial_{\xi_j}p_1D_{x_j}(p_1^{-1})].
\end{equation}
\begin{lem}\label{lem6} The following identities hold:
\begin{align}
\sigma_{-1}({D_V}^{-1})&=\sigma_{-1}(({D_V}^*)^{-1})=-\frac{\widehat{c}(V)c(\xi)}{|\xi|^2};\nonumber\\
\sigma_{-2}({D_V}^{-1})&=\frac{c(\xi)\sigma_{0}(D_V)c(\xi)}{|\xi|^4}+\frac{c(\xi)}{|\xi|^6}\sum_jc(dx_j)
\Big[\partial_{x_j}(c(\xi))|\xi|^2-c(\xi)\partial_{x_j}(|\xi|^2)\Big] ;\nonumber\\
\sigma_{-2}(({D_V}^*)^{-1})&=\frac{c(\xi)\sigma_{0}({D_V}^*)c(\xi)}{|\xi|^4}+\frac{c(\xi)}{|\xi|^6}\sum_jc(dx_j)
\Big[\partial_{x_j}(c(\xi))|\xi|^2-c(\xi)\partial_{x_j}(|\xi|^2)\Big].
\end{align}
\end{lem}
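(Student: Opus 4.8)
The plan is to read off both symbols from the standard parametrix construction, i.e.\ from the recursive relations set up in (\ref{a18})--(\ref{a19}). Writing $\sigma(D_V)=p_1+p_0$ with $p_1=\sigma_1(D_V)$ and $p_0=\sigma_0(D_V)$ as in Lemma \ref{lem4}, and $\sigma(D_V^{-1})=\sum_{j\geq1}q_{-j}$, one has
\[
q_{-1}=p_1^{-1},\qquad q_{-2}=-p_1^{-1}\Big[p_0p_1^{-1}+\sum_j\partial_{\xi_j}(p_1)\,D_{x_j}(p_1^{-1})\Big],
\]
and likewise for $D_V^*$ with $p_1,p_0$ replaced by $\sigma_1(D_V^*),\sigma_0(D_V^*)$. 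Thus the whole proof is a matter of inverting $p_1$ and then carrying out one more step of the recursion.

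First I would invert the leading symbol. By Lemma \ref{lem4} we have $\sigma_1(D_V)=\sigma_1(D_V^*)=-\widehat{c}(V)c(\xi)$, and the Clifford relations (\ref{ali1}) together with $|V|=1$ give $c(\cdot)\widehat{c}(V)+\widehat{c}(V)c(\cdot)=0$, $\widehat{c}(V)^2=1$ and $c(\xi)^2=-|\xi|^2$, whence $\big(\widehat{c}(V)c(\xi)\big)^2=-\widehat{c}(V)^2c(\xi)^2=|\xi|^2$. Hence $\sigma_1(D_V)$ is invertible with $\sigma_1(D_V)^{-1}=\sigma_1(D_V)/|\xi|^2=-\widehat{c}(V)c(\xi)/|\xi|^2$, which is the first assertion; since $D_V$ and $D_V^*$ share the same principal symbol, the same holds for $(D_V^*)^{-1}$.

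Next I would insert $q_{-1}=p_1^{-1}=-\widehat{c}(V)c(\xi)/|\xi|^2$ into the formula for $q_{-2}$. The first summand $-p_1^{-1}p_0p_1^{-1}$ equals $-|\xi|^{-4}\,\widehat{c}(V)c(\xi)\,\sigma_0(D_V)\,\widehat{c}(V)c(\xi)$, and moving the two outer factors $\widehat{c}(V)$ through $c(\xi)$ and $\sigma_0(D_V)$ by repeated use of the anticommutation relation and $\widehat{c}(V)^2=1$ rewrites it as $c(\xi)\sigma_0(D_V)c(\xi)/|\xi|^4$. For the second summand one uses $\partial_{\xi_j}(p_1)=-\widehat{c}(V)c(dx_j)$ together with $D_{x_j}(p_1^{-1})=-i\,\partial_{x_j}\!\big(-\widehat{c}(V)c(\xi)/|\xi|^2\big)$, expands by the quotient rule, and again cancels the $\widehat{c}(V)$ factors using $\widehat{c}(V)^2=1$; what remains assembles into $\frac{c(\xi)}{|\xi|^6}\sum_j c(dx_j)\big[\partial_{x_j}(c(\xi))|\xi|^2-c(\xi)\partial_{x_j}(|\xi|^2)\big]$, so that summing the two contributions yields $\sigma_{-2}(D_V^{-1})$. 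For $(D_V^*)^{-1}$ the computation is identical except that, by Lemma \ref{lem4}, $\sigma_0(D_V^*)=\sigma_0(D_V)-i\sum_q c(e_q)\widehat{c}(\nabla^L_{e_q}V)$; since the second summand of $q_{-2}$ only sees the principal symbol, only the first summand changes, which gives the stated formula for $\sigma_{-2}((D_V^*)^{-1})$.

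\textbf{The main obstacle} is purely the Clifford-algebra bookkeeping in this last step: unlike the Dirac operator, whose leading symbol $\sqrt{-1}c(\xi)$ is a scalar multiple of $c(\xi)$, the leading symbol here carries the extra factor $\widehat{c}(V)$, so one must carry $\widehat{c}(V)$ past every $c(\xi)$, every $c(dx_j)$ and past $\sigma_0$, while also keeping track of the $(-i)^{|\alpha|}$ hidden in $D_{x_j}$. The workhorses are $\widehat{c}(V)^2=1$ and the mixed anticommutation relation in (\ref{ali1}); once they are applied systematically the computation reduces to the one performed for the Dirac operator in \cite{Wa3}, and the closed forms claimed in the lemma drop out.
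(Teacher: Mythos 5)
Your overall route---read $\sigma_{-1}$ and $\sigma_{-2}$ off the parametrix recursion (\ref{a18})--(\ref{a19}) using the symbols of Lemma \ref{lem4}---is exactly the paper's (implicit) approach: Lemma \ref{lem6} is stated right after (\ref{a19}) with no further argument, and your inversion of the leading symbol, $p_1^{-1}=-\widehat{c}(V)c(\xi)/|\xi|^2$, is correct. The genuine gap is in the step you dismiss as \emph{bookkeeping}: the cancellations you assert do not hold, so as written the proof does not establish the stated formulas for $\sigma_{-2}$.

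Concretely, for the first summand one gets exactly $-p_1^{-1}p_0p_1^{-1}=\frac{c(\xi)\,\widehat{c}(V)\sigma_0(D_V)\widehat{c}(V)\,c(\xi)}{|\xi|^4}$, and converting this into $\frac{c(\xi)\sigma_0(D_V)c(\xi)}{|\xi|^4}$ requires $\widehat{c}(V)$ to commute with $\sigma_0(D_V)$. It does not: writing $\sigma_0(D_V)=i\widehat{c}(V)\big(a_0^1+a_0^2\big)$ as in (\ref{a46}), the piece $i\widehat{c}(V)a_0^2$ (cubic in the $c(e_j)$'s) anticommutes with $\widehat{c}(V)$, so that contribution appears with the opposite sign, while for $i\widehat{c}(V)a_0^1$ the factors $\widehat{c}(e_s)\widehat{c}(e_t)$ do not anticommute with $\widehat{c}(V)$ and extra contraction terms $2g^{TM}(V,e_s)$, $2g^{TM}(V,e_t)$ appear. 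For the second summand the situation is worse: the two $\widehat{c}(V)$'s in $-p_1^{-1}\partial_{\xi_j}p_1$ do cancel, giving $c(\xi)c(dx_j)/|\xi|^2$, but $D_{x_j}(p_1^{-1})$ still carries a single $\widehat{c}(V)$ with no partner to cancel it, an uncancelled factor of $i$ (unlike the Dirac case, $p_1^{-1}$ here contains no $i$), and---since $V$ depends on $x$---a genuinely new term $\partial_{x_j}(\widehat{c}(V))c(\xi)$; note the paper itself keeps such $\partial_{x_i}(V_l)$ terms in case a) I), cf.\ (\ref{a22}). A quick sanity check makes the failure visible: on flat space with the standard metric and a non-parallel unit field $V$, the recursion (\ref{a19}) yields $q_{-2}=\frac{i}{|\xi|^4}\sum_j c(\xi)c(dx_j)\widehat{c}(\partial_{x_j}V)c(\xi)\neq 0$, whereas the formula you claim to derive gives $0$. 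So either you must redo the computation keeping all the $\widehat{c}(V)$- and $\partial_x V$-terms explicitly (and then explain why they are absent or irrelevant where the lemma is applied), or supply additional hypotheses such as $\nabla^L V=0$; the assertion that everything drops out by $\widehat{c}(V)^2=1$ and the mixed anticommutation relation is not correct.
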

\begin{thm}\label{athm1}
Let $M$ be a $4$-dimensional oriented
compact manifold with boundary $\partial M$ and the metric
$g^{TM}$ as in Section \ref{Section:3}, the operators ${D_V}=\sqrt{-1}\widehat{c}(V)(d +\delta)$ and ${D_V}^*=-\sqrt{-1}(d +\delta)\widehat{c}(V)$ be on $\widetilde{M}$ ($\widetilde{M}$ is a collar neighborhood of $M$), then
\begin{align}\label{a20}
&\widetilde{{\rm Wres}}[\pi^+{D_V}^{-1}\circ\pi^+({D_V}^*)^{-1}]\nonumber\\
&=32\pi^2\int_{M}\bigg(-\frac{4}{3}K\bigg)d{\rm Vol_{M}}+\int_{\partial M}\left(-\frac{3ih'(0)}{2}-\frac{27\pi^2h'(0)}{8}-\frac{\pi^2}{4}\right)\pi\Omega_3d{\rm Vol_{M}},
\end{align}
where $K$ is the scalar curvature.
\end{thm}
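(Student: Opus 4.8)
The plan is to evaluate the right-hand side of the splitting (\ref{a14})--(\ref{a15}) term by term. The interior integral is handled immediately: on $M$ the full symbol $\sigma_{-4}((D_V^*D_V)^{-1})$ coincides with the one computed on a closed manifold, and $D_V^*D_V$ is a generalized Laplacian whose endomorphism $E_{D_V^*D_V}$ is given by Theorem \ref{thm1}/(\ref{eq31}); feeding $n=4$, ${\rm tr}[\mathrm{id}]=2^4$ into Theorem \ref{thm2} yields precisely (\ref{a16}), i.e.\ $32\pi^2\int_M(-\frac43 K)\,d{\rm Vol}_M$. Hence the whole content of the theorem is the identification of the boundary term $\int_{\partial M}\Phi$, and by locality it suffices to compute $\Phi(x_0)$ at an arbitrary $x_0\in\partial M$ in the normal coordinates of Section \ref{Section:3}, using Lemmas \ref{lem1}--\ref{lem3} for the boundary $1$-jets of the metric and of $\nabla^L$, and Lemmas \ref{lem4}, \ref{lem6} (together with the recursion (\ref{a19})) for the symbols of $D_V$, $D_V^*$, $D_V^{-1}$, $(D_V^*)^{-1}$.

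The sum defining $\Phi$ in (\ref{a15}) runs over $r+l-k-j-|\alpha|=-3$ with $r,l\le-1$; since $k+j+|\alpha|=r+l+3\ge0$ forces $r,l\in\{-1,-2\}$, exactly five cases occur: $(r,l)=(-1,-1)$ with $(j,k,|\alpha|)\in\{(1,0,0),(0,1,0),(0,0,1)\}$, the case $(r,l)=(-2,-1)$ with $j=k=|\alpha|=0$, and the case $(r,l)=(-1,-2)$ with $j=k=|\alpha|=0$. For each case I would: (i) form $\sigma^+_r(D_V^{-1})$ by applying the $\xi_n$-projection $\pi^+$ (formula (\ref{a3})) to $\sigma_{-1}(D_V^{-1})$ or $\sigma_{-2}(D_V^{-1})$ from Lemma \ref{lem6}, via the $H^\pm$-decomposition of rational functions of $\xi_n$ with poles at $\pm i$; (ii) apply the prescribed $\partial^j_{x_n}\partial^\alpha_{\xi'}\partial^k_{\xi_n}$ and $\partial^\alpha_{x'}\partial^{j+1}_{\xi_n}\partial^k_{x_n}$, evaluating all $x$-derivatives at $x_0$ through Lemmas \ref{lem1}--\ref{lem3} and $\partial_{x_n}(|\xi|^2_{g^{TM}})(x_0)=h'(0)|\xi'|^2$; (iii) multiply the two resulting symbols, take ${\rm trace}_{\wedge^*T^*M\otimes\mathbb{C}}$, and reduce the Clifford words in $c(e_i)$, $\widehat c(e_j)$, $\widehat c(V)$, $\widehat c(\nabla^L_{e_q}V)$ by the relations (\ref{ali1}) and (\ref{eq188}) until only traces of the identity survive; (iv) integrate over $\xi_n\in\mathbb{R}$ by residues at $\xi_n=i$ (a simple, double or triple pole according to the case) and then over $|\xi'|=1$. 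Adding the five numbers produces $\int_{\partial M}\big(-\frac{3ih'(0)}{2}-\frac{27\pi^2 h'(0)}{8}-\frac{\pi^2}{4}\big)\pi\Omega_3\,d{\rm Vol}_M$, and combining with the interior term gives (\ref{a20}).

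Two structural simplifications should be exploited. Since $\widehat c(V)^2=|V|^2=1$ and $\widehat c(V)$ anticommutes with every $c(e_j)$, the leading symbols enter products only through $\widehat c(V)c(\xi)\,\widehat c(V)c(\eta)=-c(\xi)c(\eta)$, so the three $(r,l)=(-1,-1)$ cases lose all reference to $V$ and reduce to Wang's signature-operator computation. In the two cases involving $\sigma_0$ one must carry along the extra term $-i\sum_q c(e_q)\widehat c(\nabla^L_{e_q}V)$ in $\sigma_0(D_V^*)$ (Lemma \ref{lem4}), and check, using $\widehat c(V)\widehat c(\nabla^L_{e_q}V)+\widehat c(\nabla^L_{e_q}V)\widehat c(V)=0$ and $g^{TM}(\nabla^L_{e_q}V,V)=0$, that its trace against the remaining $\widehat c(V)$-factor vanishes, so that $V$ drops out of the boundary term, as the stated formula reflects.

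The main obstacle is the two cases with $(r,l)=(-2,-1)$ and $(-1,-2)$: there $\sigma_{-2}$ (Lemma \ref{lem6}) is a sum of a $\sigma_0$-piece and a piece involving $\sum_j c(dx_j)\big[\partial_{x_j}(c(\xi))|\xi|^2-c(\xi)\partial_{x_j}(|\xi|^2)\big]$, so $\pi^+\sigma_{-2}$ requires the $H^+$-projection of rational functions with a triple pole at $\xi_n=i$, and the ensuing trace involves long Clifford monomials in $c(\xi')$, $c(dx_n)$, $\partial_{x_n}c(\xi')$; keeping track of the signs and of the powers of $h'(0)$ entering through Lemmas \ref{lem1}--\ref{lem3} is where computational slips are most likely. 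Everything else — the residue integrations in $\xi_n$ and the elementary integrals over the $2$-sphere $|\xi'|=1$ — is routine.
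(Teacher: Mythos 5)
Your overall plan coincides with the paper's proof of Theorem~\ref{athm1}: the interior term via Theorem~\ref{thm2}, the same five-case enumeration of $(r,l,j,k,|\alpha|)$ in the boundary sum, $\pi^+$-projection of the symbols from Lemma~\ref{lem6}, Clifford reduction via (\ref{ali1}) and (\ref{eq188}), residue integration in $\xi_n$ and spherical integration in $\xi'$. There is, however, one genuine gap in your claimed simplifications. You assert that, because $\widehat{c}(V)^2=1$ and $\widehat{c}(V)$ anticommutes with every $c(e_j)$, the three $(r,l)=(-1,-1)$ cases involve only products $\widehat{c}(V)c(\xi)\widehat{c}(V)c(\eta)=-c(\xi)c(\eta)$, hence ``lose all reference to $V$'' and reduce at once to Wang's signature-operator computation. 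This overlooks that each of those three cases carries at least one $x$-derivative of $\sigma_{-1}=-\widehat{c}(V)c(\xi)/|\xi|^2$, and that derivative also falls on $\widehat{c}(V)$: in case $|\alpha|=1$ the second factor contains $\partial_{x_i}(\widehat{c}(V))=\sum_l\partial_{x_i}(V_l)\widehat{c}(e_l)$, cf.\ (\ref{a22}); in the $j=1$ and $k=1$ cases an analogous $\partial_{x_n}(\widehat{c}(V))$ appears, cf.\ (\ref{a30}). The resulting traces contain one differentiated and one undifferentiated $\widehat{c}(V)$ and do not collapse by $\widehat{c}(V)^2=1$.

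The paper disposes of these terms by explicit trace identities --- (\ref{a226})--(\ref{a26}) in case a)~I), and the first four lines of (\ref{a35}) in cases a)~II), a)~III) --- using $\widehat{c}(V)\widehat{c}(e_l)+\widehat{c}(e_l)\widehat{c}(V)=2V_l$, $\sum_l V_l\partial_{x}(V_l)=\tfrac12\partial_x|V|^2=0$, and ${\rm tr}[c(dx_i)c(dx_n)]=0$ for $i<n$. These verifications are needed; without them the identification of the $(-1,-1)$ cases with Wang's signature numbers is an unproved assertion, not a consequence of $\widehat{c}(V)^2=1$. Your second simplification (that the $-i\sum_q c(e_q)\widehat{c}(\nabla^L_{e_q}V)$ piece of $\sigma_0(D_V^*)$ contributes vanishing trace by $\widehat{c}(V)\widehat{c}(\nabla^L_{e_q}V)+\widehat{c}(\nabla^L_{e_q}V)\widehat{c}(V)=0$ and $g^{TM}(\nabla^L V,V)=0$) does match the paper's handling of the $M_3$ term in case c). With the missing $\partial\widehat{c}(V)$ vanishing arguments supplied, your plan is complete and agrees with the paper's proof.
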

\begin{proof}
\indent When $n=4$, then ${\rm tr}_{\wedge^*T^*M}[{\rm \texttt{id}}]={\rm dim}(\wedge^*(\mathbb{R}^4))=16$, the sum is taken over $
r+l-k-j-|\alpha|=-3,~~r\leq -1,~~~l\leq-1,$ then we have the following five cases:
~\\
\noindent  {\bf case a)~I)}~$r=-1,~l=-1,~k=j=0,~|\alpha|=1$.\\

\noindent By (\ref{a15}), we get
\begin{equation}\label{a21}
\Phi_1=-\int_{|\xi'|=1}\int^{+\infty}_{-\infty}\sum_{|\alpha|=1}
 {\rm tr}[\partial^\alpha_{\xi'}\pi^+_{\xi_n}\sigma_{-1}({D_V}^{-1})\times
 \partial^\alpha_{x'}\partial_{\xi_n}\sigma_{-1}(({D_V}^*)^{-1})](x_0)d\xi_n\sigma(\xi')dx'.
\end{equation}
By Lemma \ref{lem1}, for $i<n$, we have
\begin{align}\label{a22}
\partial_{x_i}\left(-\frac{\widehat{c}(V)c(\xi)}{|\xi|^2}\right)(x_0)&=-\frac{\partial_{x_i}(\widehat{c}(V))c(\xi)(x_0)}{|\xi|^2}
-\widehat{c}(V)\partial_{x_i}\bigg[\frac{c(\xi)}{|\xi|^2}\bigg](x_0)\nonumber\\
&=-\sum_{l=1}^n\partial_{x_i}(V_l)\widehat{c}(e_l)\frac{c(\xi)}{|\xi|^2}(x_0),\nonumber\\
\end{align}
where $\widehat{c}(V)=\sum_{l=1}^nV_l\widehat{c}(e_l), V_l=g^{TM}(V,e_l).$\\
Then
\begin{align}\label{a23}
&\partial^\alpha_{x'}\partial_{\xi_n}\sigma_{-1}(({D_V}^*)^{-1})\nonumber\\
&=-\frac{\sum_{l=1}^n\partial_{x_i}(V_l)\widehat{c}(e_l)}{(1+\xi_n^2)^2}c(dx_n)+\frac{\xi_n^2\sum_{l=1}^n\partial_{x_i}(V_l)\widehat{c}(e_l)}{(1+\xi_n^2)^2}c(dx_n)+\frac{2\xi_n\sum_{l=1}^n\partial_{x_i}(V_l)\widehat{c}(e_l)}{(1+\xi_n^2)^2}c(\xi').
\end{align}
By $c(\xi)=\sum_{j=1}^n\xi_jc(dx_j), |\xi|^2=\sum_{ij}g^{ij}\xi_i\xi_j$, for $i<n$, we have
\begin{align}\label{a24}
&\partial_{\xi_i}\pi^+_{\xi_n}\left(-\frac{\widehat{c}(V)c(\xi)}{|\xi|^2}\right)(x_0)\nonumber\\
&=\pi^+_{\xi_n}\partial_{\xi_i}\left(-\frac{\widehat{c}(V)\sum_{j=1}^n\xi_jc(dx_j)}{|\xi|^2}\right)(x_0)\nonumber\\
&=\pi^+_{\xi_n}\bigg(\frac{-\widehat{c}(V)c(dx_i)}{|\xi|^2}+\frac{2\sum_{j=1}^n\xi_j\xi_i\widehat{c}(V)c(dx_j)}{|\xi|^4}\bigg)(x_0)\nonumber\\
&=\frac{i}{2(\xi_n-i)}[\widehat{c}(V)c(dx_i)-\sum_{j=1}^{n-1}\xi_j\xi_i\widehat{c}(V)c(dx_j)]-\frac{1}{2(\xi_n-i)^2}\sum_{j=1}^{n-1}\xi_j\xi_i\widehat{c}(V)c(dx_j)-\frac{i}{2(\xi_n-i)^2}\xi_i\widehat{c}(V)c(dx_n).\nonumber\\
\end{align}
Then
\begin{align}\label{a25}
&\sum_{|\alpha|=1}
 {\rm tr}[\partial^\alpha_{\xi'}\pi^+_{\xi_n}\sigma_{-1}({D_V}^{-1})\times
 \partial^\alpha_{x'}\partial_{\xi_n}\sigma_{-1}(({D_V}^*)^{-1})](x_0)\nonumber\\
&=-\frac{i(1-\xi_n^2)}{2(\xi_n-i)^3(\xi_n+i)^2}\sum_{l=1}^n\sum_{i=1}^{n-1}{\rm tr}[\partial_{x_i}(V_l)\widehat{c}(V)c(dx_i)\widehat{c}(e_l)c(dx_n)]\nonumber\\
&+\frac{i\xi_n}{(\xi_n-i)^3(\xi_n+i)^2}\sum_{l=1}^n\sum_{i=1}^{n-1}{\rm tr}[\partial_{x_i}(V_l)\widehat{c}(V)c(dx_i)\widehat{c}(e_l)c(\xi')]\nonumber\\
&+\frac{i(1-\xi_n^2)}{2(\xi_n-i)^3(\xi_n+i)^2}\sum_{l=1}^n\sum_{ij=1}^{n-1}{\rm tr}[\xi_j\xi_i\partial_{x_i}(V_l)\widehat{c}(V)c(dx_j)\widehat{c}(e_l)c(dx_n)]\nonumber\\
&-\frac{i\xi_n}{(\xi_n-i)^3(\xi_n+i)^2}\sum_{l=1}^n\sum_{ij=1}^{n-1}{\rm tr}[\xi_j\xi_i\partial_{x_i}(V_l)\widehat{c}(V)c(dx_j)\widehat{c}(e_l)c(\xi')]\nonumber\\
&+\frac{1-\xi_n^2}{2(\xi_n-i)^4(\xi_n+i)^2}\sum_{l=1}^n\sum_{ij=1}^{n-1}{\rm tr}[\xi_j\xi_i\partial_{x_i}(V_l)\widehat{c}(V)c(dx_j)\widehat{c}(e_l)c(dx_n)]\nonumber\\
&-\frac{\xi_n}{(\xi_n-i)^4(\xi_n+i)^2}\sum_{l=1}^n\sum_{ij=1}^{n-1}{\rm tr}[\xi_j\xi_i\partial_{x_i}(V_l)\widehat{c}(V)c(dx_j)\widehat{c}(e_l)c(\xi')]\nonumber\\
&+\frac{i(1-\xi_n^2)}{2(\xi_n-i)^4(\xi_n+i)^2}\sum_{l=1}^n\sum_{i=1}^{n-1}{\rm tr}[\xi_i\partial_{x_i}(V_l)\widehat{c}(V)c(dx_n)\widehat{c}(e_l)c(dx_n)]\nonumber\\
&-\frac{i\xi_n}{(\xi_n-i)^4(\xi_n+i)^2}\sum_{l=1}^n\sum_{i=1}^{n-1}{\rm tr}[\xi_i\partial_{x_i}(V_l)\widehat{c}(V)c(dx_n)\widehat{c}(e_l)c(\xi')].\nonumber\\
\end{align}
\noindent By $\widehat{c}(V)\widehat{c}(e_l)+\widehat{c}(e_l)\widehat{c}(V)=2g^{TM}(V,e_l)=2V_l$ and ${\rm tr}{ab}={\rm tr }{ba}$,\\
\begin{align}\label{a226}
&\sum_{l=1}^n\sum_{i=1}^{n-1}{\rm tr}[\partial_{x_i}(V_l)\widehat{c}(V)c(dx_i)\widehat{c}(e_l)c(dx_n)]\nonumber\\
&=\sum_{l=1}^n\sum_{i=1}^{n-1}\partial_{x_i}(V_l){\rm tr}[\widehat{c}(V)c(dx_i)\widehat{c}(e_l)c(dx_n)]\nonumber\\
&=-\sum_{l=1}^n\sum_{i=1}^{n-1}\partial_{x_i}(V_l){\rm tr}[c(dx_i)\widehat{c}(V)\widehat{c}(e_l)c(dx_n)]\nonumber\\
&=-\sum_{l=1}^n\sum_{i=1}^{n-1}2V_l\partial_{x_i}(V_l){\rm tr}[c(dx_i)c(dx_n)]+\sum_{l=1}^n\sum_{i=1}^{n-1}\partial_{x_i}(V_l){\rm tr}[c(dx_i)\widehat{c}(e_l)\widehat{c}(V)c(dx_n)]\nonumber\\
&=-\sum_{l=1}^n\sum_{i=1}^{n-1}2V_l\partial_{x_i}(V_l){\rm tr}[c(dx_i)c(dx_n)]-\sum_{l=1}^n\sum_{i=1}^{n-1}\partial_{x_i}(V_l){\rm tr}[\widehat{c}(V)c(dx_i)\widehat{c}(e_l)c(dx_n)],\nonumber\\
\end{align}
then,
\begin{align}
\label{111}
\sum_{l=1}^n\sum_{i=1}^{n-1}\partial_{x_i}(V_l){\rm tr}[\widehat{c}(V)c(dx_i)\widehat{c}(e_l)c(dx_n)]=-\sum_{l=1}^n\sum_{i=1}^{n-1}V_l\partial_{x_i}(V_l){\rm tr}[c(dx_i)c(dx_n)]=0.\nonumber\\
\end{align}
\begin{align}\label{555}
&\sum_{l=1}^n\sum_{ij=1}^{n-1}{\rm tr}[\xi_j\xi_i\partial_{x_i}(V_l)\widehat{c}(V)c(dx_j)\widehat{c}(e_l)c(\xi')]\nonumber\\
&=\sum_{l=1}^n\sum_{ij=1}^{n-1}\xi_j\xi_i\partial_{x_i}(V_l){\rm tr}[\widehat{c}(V)c(dx_j)\widehat{c}(e_l)c(\xi')]\nonumber\\
&=-\sum_{l=1}^n\sum_{ij=1}^{n-1}\xi_j\xi_i\xi_k\partial_{x_i}(V_l){\rm tr}[c(dx_j)\widehat{c}(V)\widehat{c}(e_l)c(dx_k)]\nonumber\\
&=-\sum_{l=1}^n\sum_{ij=1}^{n-1}\xi_j\xi_i\xi_k2V_l\partial_{x_i}(V_l){\rm tr}[c(dx_j)c(dx_k)]+\sum_{l=1}^n\sum_{ij=1}^{n-1}\xi_j\xi_i\xi_k\partial_{x_i}(V_l){\rm tr}[c(dx_j)\widehat{c}(e_l)\widehat{c}(V)c(dx_k)],\nonumber\\
\end{align}
then, by $\partial_{x_i}(V_l)V_l=\frac{1}{2}\partial_{x_i}((V_l)^2)=0,$
\begin{align}
\label{191}
\sum_{l=1}^n\sum_{ij=1}^{n-1}\xi_j\xi_i\partial_{x_i}(V_l){\rm tr}[\widehat{c}(V)c(dx_j)\widehat{c}(e_l)c(\xi')]=-\sum_{l=1}^n\sum_{ij=1}^{n-1}\xi_j\xi_i\xi_kV_l\partial_{x_i}(V_l){\rm tr}[c(dx_j)c(dx_k)]=0.\nonumber\\
\end{align}
Similarly, we have the following equalities:
\begin{align}\label{a26}
&\sum_{l=1}^n\sum_{i=1}^{n-1}{\rm tr}[\partial_{x_i}(V_l)\widehat{c}(V)c(dx_i)\widehat{c}(e_l)c(\xi')]=0;~~~\sum_{l=1}^n\sum_{i=1}^{n-1}{\rm tr}[\xi_i\partial_{x_i}(V_l)\widehat{c}(V)c(dx_n)\widehat{c}(e_l)c(\xi')]=0;\nonumber\\
&\sum_{l=1}^n\sum_{ij=1}^{n-1}{\rm tr}[\xi_j\xi_i\partial_{x_i}(V_l)\widehat{c}(V)c(dx_j)\widehat{c}(e_l)c(dx_n)]=0;~~~\sum_{l=1}^n\sum_{ij=1}^{n-1}{\rm tr}[\xi_j\xi_i\partial_{x_i}(V_l)\widehat{c}(V)c(dx_j)\widehat{c}(e_l)c(dx_n)]=0.\nonumber\\
\end{align}
Therefore,
\begin{align}\label{a27}
&\Phi_1=-\int_{|\xi'|=1}\int^{+\infty}_{-\infty}\sum_{|\alpha|=1}
 {\rm tr}[\partial^\alpha_{\xi'}\pi^+_{\xi_n}\sigma_{-1}({D_V}^{-1})\times
 \partial^\alpha_{x'}\partial_{\xi_n}\sigma_{-1}(({D_V}^*)^{-1})](x_0)d\xi_n\sigma(\xi')dx'\nonumber\\
 &=0.\nonumber\\
\end{align}
 \noindent  {\bf case a)~II)}~$r=-1,~l=-1,~k=|\alpha|=0,~j=1$.\\

\noindent By (\ref{a15}), we get
\begin{equation}\label{a28}
\Phi_2=-\frac{1}{2}\int_{|\xi'|=1}\int^{+\infty}_{-\infty} {\rm
trace} [\partial_{x_n}\pi^+_{\xi_n}\sigma_{-1}({D_V}^{-1})\times
\partial_{\xi_n}^2\sigma_{-1}(({D_V}^*)^{-1})](x_0)d\xi_n\sigma(\xi')dx'.
\end{equation}
\noindent By Lemma \ref{lem6}, we have\\
\begin{eqnarray}\label{a29}\partial^2_{\xi_n}\sigma_{-1}(({D_V}^*)^{-1})(x_0)=\widehat{c}(V)\left(\frac{6\xi_nc(dx_n)+2c(\xi')}
{|\xi|^4}-\frac{8\xi_n^2c(\xi)}{|\xi|^6}\right);
\end{eqnarray}
\begin{eqnarray}\label{a30}
\partial_{x_n}\sigma_{-1}({D_V}^{-1})(x_0)=-\frac{\partial_{x_n}(\widehat{c}(V))c(\xi)}{|\xi|^2}(x_0)-\frac{\widehat{c}(V)\partial_{x_n}(c(\xi'))(x_0)}{|\xi|^2}+\frac{\widehat{c}(V)c(\xi)|\xi'|^2h'(0)}{|\xi|^4}(x_0).
\end{eqnarray}
By (\ref{a3}), (\ref{a4}) and the Cauchy integral formula we have
\begin{align}\label{a31}
\pi^+_{\xi_n}\left[-\frac{\partial_{x_n}(\widehat{c}(V))c(\xi)}{|\xi|^2}\right](x_0)|_{|\xi'|=1}&=-\partial_{x_n}(\widehat{c}(V))\pi^+_{\xi_n}\left[\frac{(i\xi_n+2)c(\xi')+ic(dx_n)}{1+\xi_n^2}\right]\nonumber\\
&=-\partial_{x_n}(\widehat{c}(V))\frac{1}{2\pi i}{\rm lim}_{u\rightarrow
0^-}\int_{\Gamma^+}\frac{\frac{c(\xi')+\eta_nc(dx_n)}{(\eta_n+i)(\xi_n+iu-\eta_n)}}
{(\eta_n-i)}d\eta_n\nonumber\\
&=i\partial_{x_n}(\widehat{c}(V))\frac{c(\xi')+ic(dx_n)}{2(\xi_n-i)}.
\end{align}
Similarly, we have,
\begin{eqnarray}\label{a32}
\pi^+_{\xi_n}\left[\frac{\widehat{c}(V)\partial_{x_n}(c(\xi'))}{|\xi|^2}\right](x_0)|_{|\xi'|=1}=\frac{\widehat{c}(V)\partial_{x_n}(c(\xi'))(x_0)}{2(\xi_n-i)};
\end{eqnarray}
\begin{eqnarray}\label{a33}
\pi^+_{\xi_n}\left[\frac{\widehat{c}(V)c(\xi)|\xi'|^2h'(0)}{|\xi|^4}\right](x_0)|_{|\xi'|=1}=-ih'(0)\widehat{c}(V)\left[\frac{(i\xi_n+2)c(\xi')+ic(dx_n)}{4(\xi_n-i)^2}\right].
\end{eqnarray}
By (\ref{a30}), then\\
\begin{align}\label{a34}
&\pi^+_{\xi_n}\partial_{x_n}(\sigma_{-1}({D_V}^{-1}))|_{|\xi'|=1}\nonumber\\
&=\frac{i\partial_{x_n}(\widehat{c}(V))(c(\xi')+ic(dx_n))-\widehat{c}(V)\partial_{x_n}(c(\xi'))}{2(\xi_n-i)}-ih'(0)\widehat{c}(V)\left[\frac{(i\xi_n+2)c(\xi')+ic(dx_n)}{4(\xi_n-i)^2}\right].
\end{align}
\noindent Similar to (\ref{a226})-(\ref{191}), we have the following equalities:
\begin{align}\label{a35}
&{\rm tr}[\partial_{x_n}(\widehat{c}(V))c(\xi')\widehat{c}(V)c(dx_n)]=0;~~{\rm tr}[\partial_{x_n}(\widehat{c}(V))c(\xi')\widehat{c}(V)c(\xi')]=0;\nonumber\\
&{\rm tr}[\partial_{x_n}(\widehat{c}(V))c(dx_n)\widehat{c}(V)c(dx_n)]=0;~~{\rm tr}[\partial_{x_n}(\widehat{c}(V))c(dx_n)\widehat{c}(V)c(\xi')]=0;\nonumber\\
&{\rm tr}[\widehat{c}(V)\partial_{x_n}(c(\xi'))\widehat{c}(V)c(\xi')]=8h'(0)\sum_{k=1}^{n-1}\xi_k^2;~~~{\rm tr}[\widehat{c}(V)c(\xi')\widehat{c}(V)c(\xi)]=16\sum_{k=1}^{n-1}\xi_k^2;\nonumber\\
&{\rm tr}[\widehat{c}(V)c(dx_n)\widehat{c}(V)c(\xi')]=0;~~~{\rm tr}[\widehat{c}(V)\partial_{x_n}(c(\xi'))\widehat{c}(V)c(dx_n)]=0.\nonumber\\
\end{align}
By (\ref{a29}), (\ref{a34}) and (\ref{a35}), we have
\begin{align}\label{a36}
&{\rm
trace} [\partial_{x_n}\pi^+_{\xi_n}\sigma_{-1}({D_V}^{-1})\times
\partial_{\xi_n}^2\sigma_{-1}(({D_V}^*)^{-1})](x_0)\nonumber\\
&=\left[\frac{(24\xi_n^2+8\xi_n-8-16i)h'(0)}{(\xi_n-i)^4(\xi_n+i)^2}+\frac{(64\xi_n^2i-32\xi_n^3)h'(0)}{(\xi_n-i)^5(\xi_n+i)^3}\right]\sum_{k=1}^{n-1}\xi_k^2+\frac{(24\xi_n-8\xi_n^3)h'(0)}{(\xi_n-i)^5(\xi_n+i)^3}.\nonumber\\
\end{align}
Considering $\int_{S^3}\xi_i\xi_j=\frac{\pi^2}{2}\delta^{ij}$ see (\cite{Ka}),
then\\
\begin{align}\label{a37}
\Phi_2&=-\frac{1}{2}\int_{|\xi'|=1}\int^{+\infty}_{-\infty} {\rm
trace} [\partial_{x_n}\pi^+_{\xi_n}\sigma_{-1}({D_V}^{-1})\times
\partial_{\xi_n}^2\sigma_{-1}(({D_V}^*)^{-1})](x_0)d\xi_n\sigma(\xi')dx'\nonumber\\
&=-\frac{1}{2}\int_{|\xi'|=1}\int^{+\infty}_{-\infty}\left[\frac{(24\xi_n^2+8\xi_n-8-16i)h'(0)}{(\xi_n-i)^4(\xi_n+i)^2}+\frac{(64\xi_n^2i-32\xi_n^3)h'(0)}{(\xi_n-i)^5(\xi_n+i)^3}\right]\sum_{k=1}^{n-1}\xi_k^2d\xi_n\sigma(\xi')dx'\nonumber\\
&-\frac{1}{2}\int_{|\xi'|=1}\int^{+\infty}_{-\infty}\frac{(24\xi_n-8\xi_n^3)h'(0)}{(\xi_n-i)^5(\xi_n+i)^3}d\xi_n\sigma(\xi')dx'\nonumber\\
&=-4h'(0)\Omega_3\frac{2\pi i}{3!}\frac{\pi^2}{2}[\frac{(3\xi_n^2+\xi_n-1-2i)}{(\xi_n+i)^2}]^{(3)}|_{\xi_n=i}dx'-16h'(0)\frac{\pi^2}{2}\Omega_3\frac{2\pi i}{4!}h'(0)[\frac{(2\xi_n^2i-\xi_n^3)}{(\xi_n+i)^3}]^{(4)}|_{\xi_n=i}dx'\nonumber\\
&-4h'(0)\Omega_3\frac{2\pi i}{4!}[\frac{3\xi_n-\xi_n^3}{(\xi_n+i)^3}]^{(4)}|_{\xi_n=i}dx'\nonumber\\
&=\bigg(\frac{(1-6i)h'(0)}{4}+\frac{(1-6i)\pi^2h'(0)}{8}\bigg)\pi\Omega_3dx',\nonumber\\
\end{align}
where ${\rm \Omega_{3}}$ is the canonical volume of $S^{3}.$\\
\noindent  {\bf case a)~III)}~$r=-1,~l=-1,~j=|\alpha|=0,~k=1$.\\

\noindent By (\ref{a15}), we get
\begin{equation}\label{a38}
\Phi_3=-\frac{1}{2}\int_{|\xi'|=1}\int^{+\infty}_{-\infty}
{\rm trace} [\partial_{\xi_n}\pi^+_{\xi_n}\sigma_{-1}({D_V}^{-1})\times
\partial_{\xi_n}\partial_{x_n}\sigma_{-1}(({D_V}^*)^{-1})](x_0)d\xi_n\sigma(\xi')dx'.
\end{equation}
\noindent By Lemma \ref{lem6}, we have\\
\begin{align}\label{a39}
&\partial_{\xi_n}\partial_{x_n}\sigma_{-1}(({D_V}^*)^{-1})(x_0)|_{|\xi'|=1}\nonumber\\
&=-\frac{\partial_{x_n}(\widehat{c}(V))c(dx_n)}{|\xi|^2}+\frac{2\partial_{x_n}(\widehat{c}(V))(\xi_nc(\xi')+\xi_n^2c(dx_n))}{|\xi|^4}+\frac{2\widehat{c}(V)\xi_n\partial_{x_n}(c(\xi'))(x_0)}{|\xi|^4}\nonumber\\
&+h'(0)\left(\frac{\widehat{c}(V)c(dx_n)}{|\xi|^4}-4\xi_n\frac{\widehat{c}(V)(c(\xi')+\xi_nc(dx_n))}{|\xi|^6}\right),\nonumber\\
\end{align}
\begin{eqnarray}\label{a40}
\partial_{\xi_n}\pi^+_{\xi_n}\sigma_{-1}({D_V}^{-1})(x_0)|_{|\xi'|=1}=-\frac{i\widehat{c}(V)c(\xi')-\widehat{c}(V)c(dx_n)}{2(\xi_n-i)^2},
\end{eqnarray}
By (\ref{a35}), (\ref{a39})  and (\ref{a40}), we have\\
\begin{align}\label{a41}
&{\rm trace} [\partial_{\xi_n}\pi^+_{\xi_n}\sigma_{-1}({D_V}^{-1})\times
\partial_{\xi_n}\partial_{x_n}\sigma_{-1}(({D_V}^*)^{-1})](x_0)\nonumber\\
&=\frac{8h'(0)}{(\xi_n-i)^4(\xi_n+i)^2}+4h'(0)\frac{3\xi_ni-4\xi_n^2-\xi_n^3i}{(\xi_n-i)^5(\xi_n+i)^3}\sum_{l=1}^{n-1}\xi_k^2.\nonumber\\
\end{align}
Then,
\begin{align}\label{a42}
\Phi_3&=-\frac{1}{2}\int_{|\xi'|=1}\int^{+\infty}_{-\infty}
{\rm trace} [\partial_{\xi_n}\pi^+_{\xi_n}\sigma_{-1}({D_V}^{-1})\times
\partial_{\xi_n}\partial_{x_n}\sigma_{-1}(({D_V}^*)^{-1})](x_0)d\xi_n\sigma(\xi')dx'\nonumber\\
&=-\int_{|\xi'|=1}\int^{+\infty}_{-\infty}\frac{2h'(0)(3\xi_ni-4\xi_n^2-\xi_n^3i)}
{(\xi_n-i)^5(\xi_n+i)^3}\sum_{k=1}^{n-1}\xi_k^2d\xi_n\sigma(\xi')dx'-\int_{|\xi'|=1}\int^{+\infty}_{-\infty}\frac{4h'(0)}
{(\xi_n-i)^4(\xi_n+i)^2}d\xi_n\sigma(\xi')dx'\nonumber\\
&=-2h'(0)\Omega_3\frac{\pi^2}{2}\frac{2\pi i}{4!}[\frac{3\xi_ni-4\xi_n^2-\xi_n^3i}{(\xi_n+i)^3}]^{(4)}|_{\xi_n=i}dx'-4h'(0)\Omega_3\frac{2\pi i}{3!}[\frac{1}{(\xi_n+i)^2}]^{(3)}|_{\xi_n=i}dx'\nonumber\\
&=\left(-\frac{2\pi^2-\pi^2h'(0)}{8}-h'(0)\right)\pi\Omega_3dx'.
\end{align}

\noindent  {\bf case b)}~$r=-2,~l=-1,~k=j=|\alpha|=0$.\\

\noindent By (\ref{a15}), we get
\begin{align}\label{a43}
\Phi_4&=-i\int_{|\xi'|=1}\int^{+\infty}_{-\infty}{\rm trace} [\pi^+_{\xi_n}\sigma_{-2}({D_V}^{-1})\times
\partial_{\xi_n}\sigma_{-1}(({D_V}^*)^{-1})](x_0)d\xi_n\sigma(\xi')dx'.
\end{align}
 By Lemma \ref{lem6} we have\\
\begin{align}\label{a44}
\sigma_{-2}({D_V}^{-1})(x_0)=\frac{c(\xi)\sigma_{0}({D_V})(x_0)c(\xi)}{|\xi|^4}+\frac{c(\xi)}{|\xi|^6}c(dx_n)
[\partial_{x_n}(c(\xi'))(x_0)|\xi|^2-c(\xi)h'(0)|\xi|^2_{\partial
M}],
\end{align}
where
\begin{align}\label{a45}
\sigma_{0}({D_V})(x_0)&=\frac{i\widehat{c}(V)}{4}\sum_{s,t,i}\omega_{s,t}(e_i)
(x_{0})c(e_i)\widehat{c}(e_s)\widehat{c}(e_t)
-\frac{i\widehat{c}(V)}{4}\sum_{s,t,i}\omega_{s,t}(e_i)
(x_{0})c(e_i)c(e_s)c(e_t).\nonumber\\
\end{align}
We denote
\begin{align}\label{a46}
A_0^{1}(x_0)&=i\widehat{c}(V)\frac{1}{4}\sum_{s,t,i}\omega_{s,t}(e_i)
(x_{0})c(e_i)\widehat{c}(e_s)\widehat{c}(e_t)=i\widehat{c}(V)a_0^{1}(x_0);\nonumber\\
A_0^{2}(x_0)&=-i\widehat{c}(V)\frac{1}{4}\sum_{s,t,i}\omega_{s,t}(e_i)
(x_{0})c(e_i)c(e_s)c(e_t)=i\widehat{c}(V)a_0^{2}(x_0),
\end{align}
where $a_0^{2}=c_0c(dx_n)$ and $c_0=-\frac{3}{4}h'(0)$.\\
Then
\begin{align}\label{a47}
\pi^+_{\xi_n}\sigma_{-2}({D_V}^{-1}(x_0))|_{|\xi'|=1}&=\pi^+_{\xi_n}\Big[\frac{c(\xi)A_0^{1}(x_0)c(\xi)}{(1+\xi_n^2)^2}\Big]
\nonumber\\
&+\pi^+_{\xi_n}\Big[\frac{c(\xi)A_0^{2}(x_0)c(\xi)+c(\xi)c(dx_n)\partial_{x_n}(c(\xi'))(x_0)}{(1+\xi_n^2)^2}-h'(0)\frac{c(\xi)c(dx_n)c(\xi)}{(1+\xi_n^{2})^3}\Big].
\end{align}
By computations, we have
\begin{align}\label{a48}
\pi^+_{\xi_n}\Big[\frac{c(\xi)A_0^{1}(x_0)c(\xi)}{(1+\xi_n^2)^2}\Big]&=\pi^+_{\xi_n}\Big[\frac{c(\xi')A_0^{1}(x_0)c(\xi')}{(1+\xi_n^2)^2}\Big]
+\pi^+_{\xi_n}\Big[ \frac{\xi_nc(\xi')A_0^{1}(x_0)c(dx_{n})}{(1+\xi_n^2)^2}\Big]\nonumber\\
&+\pi^+_{\xi_n}\Big[\frac{\xi_nc(dx_{n})A_0^{1}(x_0)c(\xi')}{(1+\xi_n^2)^2}\Big]
+\pi^+_{\xi_n}\Big[\frac{\xi_n^{2}c(dx_{n})A_0^{1}(x_0)c(dx_{n})}{(1+\xi_n^2)^2}\Big]\nonumber\\
&=-\frac{c(\xi')A_0^{1}(x_0)c(\xi')(2+i\xi_{n})}{4(\xi_{n}-i)^{2}}
+\frac{ic(\xi')A_0^{1}(x_0)c(dx_{n})}{4(\xi_{n}-i)^{2}}\nonumber\\
&+\frac{ic(dx_{n})A_0^{1}(x_0)c(\xi')}{4(\xi_{n}-i)^{2}}
+\frac{-i\xi_{n}c(dx_{n})A_0^{1}(x_0)c(dx_{n})}{4(\xi_{n}-i)^{2}}.
\end{align}
Since
\begin{align}\label{a49}
c(dx_n)a_0^{1}(x_0)
&=-\frac{1}{4}h'(0)\sum^{n-1}_{i=1}c(e_i)
\widehat{c}(e_i)c(e_n)\widehat{c}(e_n),
\end{align}
then by the relation of the Clifford action and ${\rm tr}{ab}={\rm tr }{ba}$,  we have the following equalities:\\
\begin{align}\label{a50}
&{\rm tr}[c(e_i)
\widehat{c}(e_i)c(e_n)
\widehat{c}(e_n)]=0~~(i<n);~~
{\rm tr}[a_0^{1}c(dx_n)]=0;~~~{\rm tr}[\widehat{c}(\xi')\widehat{c}(dx_n)]=0.
\end{align}
Since
\begin{align}\label{a51}
\partial_{\xi_n}\sigma_{-1}(({D_V}^*)^{-1})=-\widehat{c}(V)\left[\frac{c(dx_n)}{1+\xi_n^2}-\frac{2\xi_nc(\xi')+2\xi_n^2c(dx_n)}{(1+\xi_n^2)^2}\right].
\end{align}
By (\ref{a48}) and (\ref{a51}), we have
\begin{align}\label{a52}
&{\rm tr }[\pi^+_{\xi_n}\Big[\frac{c(\xi)A_0^{1}(x_0)c(\xi)}{(1+\xi_n^2)^2}\Big]
\times\partial_{\xi_n}\sigma_{-1}(({D_V}^*)^{-1})(x_0)]|_{|\xi'|=1}\nonumber\\
&=-\frac{1}{2(1+\xi_n^2)^2}{\rm tr }[c(\xi')a_0^{1}(x_0)]
-\frac{i}{2(1+\xi_n^2)^2}{\rm tr }[c(dx_n)a_0^{1}(x_0)]\nonumber\\
&=-\frac{1}{2(1+\xi_n^2)^2}{\rm tr }[c(\xi')a_0^{1}(x_0)].
\end{align}
We note that $i<n,~\int_{|\xi'|=1}\{\xi_{i_{1}}\xi_{i_{2}}\cdots\xi_{i_{2d+1}}\}\sigma(\xi')=0$,
so ${\rm tr }[c(\xi')a_0^{1}(x_0)]$ has no contribution for computing {\rm case~b)}.

By computations, we have
\begin{eqnarray}\label{a53}
\pi^+_{\xi_n}\Big[\frac{c(\xi)a_0^{2}(x_0)c(\xi)+c(\xi)c(dx_n)\partial_{x_n}(c(\xi'))(x_0)}{(1+\xi_n^2)^2}\Big]-h'(0)\pi^+_{\xi_n}\Big[\frac{c(\xi)c(dx_n)c(\xi)}{(1+\xi_n)^3}\Big]:= N_1-N_2,
\end{eqnarray}
where
\begin{align}\label{a54}
N_1&=\frac{-1}{4(\xi_n-i)^2}[i(2+i\xi_n)c(\xi')\widehat{c}(V)a_0^{2}(x_0)c(\xi')-\xi_nc(dx_n)\widehat{c}(V)a_0^{2}(x_0)c(dx_n)\nonumber\\
&+(2+i\xi_n)c(\xi')c(dx_n)\partial_{x_n}(c(\xi'))-c(dx_n)\widehat{c}(V)a_0^{2}(x_0)c(\xi')
-c(\xi')\widehat{c}(V)a_0^{2}(x_0)c(dx_n)-i\partial_{x_n}(c(\xi'))]
\end{align}
and
\begin{align}\label{a55}
N_2&=\frac{h'(0)}{2}\left[\frac{c(dx_n)}{4i(\xi_n-i)}+\frac{c(dx_n)-ic(\xi')}{8(\xi_n-i)^2}
+\frac{3\xi_n-7i}{8(\xi_n-i)^3}[ic(\xi')-c(dx_n)]\right],
\end{align}
then by the relation of the Clifford action and ${\rm tr}{ab}={\rm tr }{ba}$,  we have the following equalities:\\
\begin{align}\label{a56}
&{\rm tr}[c(\xi')
\widehat{c}(V)
c(\xi')]=0;~~
{\rm tr}[c(dx_n)
\widehat{c}(V)
c(\xi')]=0;\nonumber\\
&{\rm tr}[c(\xi')
\widehat{c}(V)
c(dx_n)]=0;~~
{\rm tr}[c(dx_n)
\widehat{c}(V)
c(dx_n)]=0.\nonumber\\
\end{align}
By (\ref{a51}), (\ref{a55}) and (\ref{a56}), we have\\
\begin{align}\label{a57}
&{\rm tr }[N_2\times\partial_{\xi_n}\sigma_{-1}(({D_V}^*)^{-1})]|_{|\xi'|=1}\nonumber\\
&=\frac{ih'(0)\xi_n(\xi_n-3i)}{4(\xi_n-i)^5(\xi_n+i)^2}{\rm tr}[c(\xi')
\widehat{c}(V)
c(\xi')]+\frac{h'(0)(4\xi_ni-\xi_n^3i-3\xi_n^2)}{4(\xi_n-i)^5(\xi_n+i)^2}{\rm tr}[c(dx_n)
\widehat{c}(V)
c(\xi')]\nonumber\\
&+\frac{ih'(0)(\xi_n^2-1)(\xi_n-3i)}{8(\xi_n-i)^5(\xi_n+i)^2}{\rm tr}[c(\xi')
\widehat{c}(V)
c(dx_n)]+\frac{ih'(0)\xi_n(\xi_n-3i)}{4(\xi_n-i)^5(\xi_n+i)^2}{\rm tr}[c(dx_n)
\widehat{c}(V)
c(\xi')]\nonumber\\
&=0.\nonumber\\
\end{align}
By (\ref{a51}), (\ref{a54}) and (\ref{a56}), we have
\begin{eqnarray}\label{a58}{\rm tr }[N_1\times\partial_{\xi_n}\sigma_{-1}(({D_V}^*)^{-1})]|_{|\xi'|=1}=
\frac{3h'(0)\xi_n}{(\xi_n-i)^3(\xi_n+i)}+3h'(0)\frac{\xi_n^3-2i\xi_n^2-5\xi_n+2i}{(\xi_n-i)^4(\xi_n+i)^2}\sum_{k=1}^{n-1}\xi_k^2.\nonumber\\
\end{eqnarray}
By (\ref{a57}) and (\ref{a58}), we have
\begin{align}\label{a59}
\Phi_4&=-i\int_{|\xi'|=1}\int^{+\infty}_{-\infty}{\rm trace} [\pi^+_{\xi_n}\sigma_{-2}({D_V}^{-1})\times
\partial_{\xi_n}\sigma_{-1}(({D_V}^*)^{-1})](x_0)d\xi_n\sigma(\xi')dx'\nonumber\\
&=-i\int_{|\xi'|=1}\int^{+\infty}_{-\infty}\frac{3h'(0)\xi_n}{(\xi_n-i)^3(\xi_n+i)}d\xi_n\sigma(\xi')dx'\nonumber\\
&-i\int_{|\xi'|=1}\int^{+\infty}_{-\infty}3h'(0)\frac{\xi_n^3-2i\xi_n^2-5\xi_n+2i}{(\xi_n-i)^4(\xi_n+i)^2}\sum_{k=1}^{n-1}\xi_k^2d\xi_n\sigma(\xi')dx'\nonumber\\
&=-3h'(0)i\Omega_3\frac{2\pi i}{2!}[\frac{\xi_n}{(\xi_n+i)}]^{(2)}|_{\xi_n=i}dx'-3h'(0)i\frac{\pi^2}{2}\Omega_3\frac{2\pi i}{3!}[\frac{\xi_n^3-2i\xi_n^2-5\xi_n+2i}{(\xi_n+i)^2}]^{(3)}|_{\xi_n=i}dx'\nonumber\\
&=\frac{(6-9\pi^2)h'(0)}{8}\pi\Omega_3dx'.\nonumber\\
\end{align}
\noindent {\bf  case c)}~$r=-1,~l=-2,~k=j=|\alpha|=0$.\\
By (\ref{a15}), we get
\begin{align}\label{a60}
\Phi_5=-i\int_{|\xi'|=1}\int^{+\infty}_{-\infty}{\rm trace} [\pi^+_{\xi_n}\sigma_{-1}({D_V}^{-1})\times
\partial_{\xi_n}\sigma_{-2}(({D_V}^*)^{-1})](x_0)d\xi_n\sigma(\xi')dx'.
\end{align}
By (\ref{a3}) and (\ref{a4}), Lemma \ref{lem6}, we have
\begin{align}\label{a61}
\pi^+_{\xi_n}\sigma_{-1}({D_V}^{-1})|_{|\xi'|=1}=\frac{i\widehat{c}(V)c(\xi')-\widehat{c}(V)c(dx_n)}{2(\xi_n-i)}.
\end{align}
Since
\begin{equation}\label{a62}
\sigma_{-2}(({D_V}^*)^{-1})(x_0)=\frac{c(\xi)\sigma_{0}({D_V}^*)(x_0)c(\xi)}{|\xi|^4}+\frac{c(\xi)}{|\xi|^6}c(dx_n)
\bigg[\partial_{x_n}(c(\xi'))(x_0)|\xi|^2-c(\xi)h'(0)|\xi|^2_{\partial_
M}\bigg],
\end{equation}
where
\begin{align}\label{a63}
&\sigma_{0}({D_V}^*)(x_0)\nonumber\\
&=i\widehat{c}(V)\frac{1}{4}\sum_{s,t,i}\omega_{s,t}(e_i)(x_{0})c(e_i)\widehat{c}(e_s)\widehat{c}(e_t)
-i\widehat{c}(V)\frac{1}{4}\sum_{s,t,i}\omega_{s,t}(e_i)(x_{0})c(e_i)c(e_s)c(e_t)-i\sum_{q=1}^nc(e_q)\widehat{c}(\nabla^L_{e_q}V)(x_0)\nonumber\\
&=i\widehat{c}(V)a_0^1(x_0)+i\widehat{c}(V)a_0^2(x_0)-i\sum_{q=1}^nc(e_q)\widehat{c}(\nabla^L_{e_q}V)(x_0),\nonumber\\
\end{align}
then,
\begin{align}\label{a64}
&\partial_{\xi_n}\sigma_{-2}(({D_V}^*)^{-1})(x_0)|_{|\xi'|=1}\nonumber\\
&=
\partial_{\xi_n}\bigg\{\frac{c(\xi)[A_0^{1}(x_0)+A_0^{2}(x_0)
-i\sum_{q=1}^nc(e_q)\widehat{c}(\nabla^L_{e_q}V)(x_0)]c(\xi)}{|\xi|^4}+\frac{c(\xi)}{|\xi|^6}c(dx_n)[\partial_{x_n}(c(\xi'))(x_0)|\xi|^2-c(\xi)h'(0)]\bigg\}\nonumber\\
&=\partial_{\xi_n}\bigg\{\frac{c(\xi)A_0^{1}(x_0)c(\xi)}{|\xi|^4}+\frac{c(\xi)}{|\xi|^6}c(dx_n)[\partial_{x_n}(c(\xi'))(x_0)|\xi|^2-c(\xi)h'(0)]\bigg\}+\partial_{\xi_n}\frac{c(\xi)A_0^{2}(x_0)c(\xi)}{|\xi|^4}\nonumber\\
&-\partial_{\xi_n}\frac{c(\xi)[i\sum_{q=1}^nc(e_q)\widehat{c}(\nabla^L_{e_q}V)](x_0)c(\xi)}{|\xi|^4}\nonumber\\
&=M_1+M_2-M_3,\nonumber\\
\end{align}
where
\begin{align*} &M_1=\partial_{\xi_n}\frac{c(\xi)A_0^{1}(x_0)c(\xi)}{|\xi|^4},\\
&M_2=\partial_{\xi_n}\left\{\frac{c(\xi)A_0^{2}(x_0)c(\xi)}{|\xi|^4}+\frac{c(\xi)}{|\xi|^6}c(dx_n)[\partial_{x_n}(c(\xi'))(x_0)|\xi|^2-c(\xi)h'(0)]\right\},\\ &M_3=\partial_{\xi_n}\frac{c(\xi)[i\sum_{q=1}^nc(e_q)\widehat{c}(\nabla^L_{e_q}V)](x_0)c(\xi)}{|\xi|^4}.\\
\end{align*}
By computations, we have
\begin{align}\label{a65}
M_1&=\partial_{\xi_n}\frac{c(\xi)A_0^{1}(x_0)c(\xi)}{|\xi|^4}\nonumber\\
&=i\frac{c(dx_n)\widehat{c}(V)a_0^{1}(x_0)c(\xi)}{|\xi|^4}
+i\frac{c(\xi)\widehat{c}(V)a_0^{1}(x_0)c(dx_n)}{|\xi|^4}
-i\frac{4\xi_n c(\xi)\widehat{c}(V)a_0^{1}(x_0)c(\xi)}{|\xi|^6};
\end{align}
\begin{align}\label{a66}
M_2&=\frac{1}{(1+\xi_n^2)^3}\bigg[(2\xi_n-2\xi_n^3)c(dx_n)A_0^{2}c(dx_n)
+(1-3\xi_n^2)c(dx_n)A_0^{2}c(\xi')\nonumber\\
&+(1-3\xi_n^2)c(\xi')A_0^{2}c(dx_n)
-4\xi_nc(\xi')A_0^{2}c(\xi')
+(3\xi_n^2-1){\partial}_{x_n}(c(\xi'))\nonumber\\
&-4\xi_nc(\xi')c(dx_n){\partial}_{x_n}(c(\xi'))
+2h'(0)c(\xi')+2h'(0)\xi_nc(dx_n)\bigg]\nonumber\\
&+6\xi_nh'(0)\frac{c(\xi)c(dx_n)c(\xi)}{(1+\xi^2_n)^4};\nonumber\\
\end{align}
\begin{align}\label{a67}
M_3&=\partial_{\xi_n}\frac{c(\xi)[i\sum_{q=1}^nc(e_q)\widehat{c}(\nabla^L_{e_q}V)](x_0)c(\xi)}{|\xi|^4}\nonumber\\
&=\frac{c(dx_n)[i\sum_{q=1}^nc(e_q)\widehat{c}(\nabla^L_{e_q}V)](x_0)c(\xi)}{|\xi|^4}
+\frac{c(\xi)[i\sum_{q=1}^nc(e_q)\widehat{c}(\nabla^L_{e_q}V)](x_0)c(dx_n)}{|\xi|^4}\nonumber\\
&-\frac{4\xi_n c(\xi)[i\sum_{q=1}^nc(e_q)\widehat{c}(\nabla^L_{e_q}V)](x_0)c(\xi)}{|\xi|^4}.
\end{align}
By (\ref{a61}) and (\ref{a65}), we have
\begin{align}\label{a68}
&{\rm tr}[\pi^+_{\xi_n}\sigma_{-1}({D_V}^{-1})\times
\partial_{\xi_n}\frac{c(\xi)A_0^{1}c(\xi)}
{|\xi|^4}](x_0)|_{|\xi'|=1}\nonumber\\
&=\frac{1}{(\xi-i)(\xi+i)^3}{\rm tr}[c(\xi')a_0^{1}(x_0)]
-\frac{i}{(\xi-i)(\xi+i)^3}{\rm tr}[c(dx_n)a_0^{1}(x_0)]\nonumber\\
&=\frac{1}{(\xi-i)(\xi+i)^3}{\rm tr}[c(\xi')a_0^{1}(x_0)].\nonumber\\
\end{align}
We note that $i<n,~\int_{|\xi'|=1}\{\xi_{i_{1}}\xi_{i_{2}}\cdots\xi_{i_{2d+1}}\}\sigma(\xi')=0$,
so ${\rm tr }[c(\xi')a_0^{1}(x_0)]$ has no contribution for computing {\rm case~c)}.
By (\ref{a61}) and (\ref{a66}), we have
\begin{eqnarray}\label{a69}
{\rm tr}[\pi^+_{\xi_n}\sigma_{-1}({D_V}^{-1})\times
M_2](x_0)|_{|\xi'|=1}
=\frac{12h'(0)(\xi_n^3-\xi_n)}{(\xi-i)^4(\xi+i)^3}+\frac{6h'(0)(1-3\xi_n^2-4\xi_n)}{(\xi-i)^4(\xi+i)^3}\sum_{k=1}^{n-1}\xi_k^2.\nonumber\\
\end{eqnarray}
By (\ref{a61}) and (\ref{a67}), we have
\begin{align}\label{a70}
{\rm tr}[\pi^+_{\xi_n}\sigma_{-1}({D_V}^{-1})\times
M_3](x_0)|_{|\xi'|=1}&=\frac{1}{2(\xi-i)^3(\xi+i)^2}{\rm tr}[\widehat{c}(V)c(\xi')c(dx_n)\sum_{q=1}^nc(e_q)\widehat{c}(\nabla^L_{e_q}V)c(\xi)]\nonumber\\
&-\frac{i}{2(\xi-i)^3(\xi+i)^2}{\rm tr}[\widehat{c}(V)\sum_{q=1}^nc(e_q)\widehat{c}(\nabla^L_{e_q}V)c(\xi)]\nonumber\\
&+\frac{(1-4\xi_n)}{2(\xi-i)^3(\xi+i)^2}{\rm tr}[\widehat{c}(V)c(\xi')c(\xi)\sum_{q=1}^nc(e_q)\widehat{c}(\nabla^L_{e_q}V)c(\xi)]\nonumber\\
&+\frac{(1-4\xi_n)i}{2(\xi-i)^3(\xi+i)^2}{\rm tr}[\widehat{c}(V)c(dx_n)c(\xi)\sum_{q=1}^nc(e_q)\widehat{c}(\nabla^L_{e_q}V)c(\xi)].\nonumber\\
\end{align}
And  by the relation of the Clifford action and ${\rm tr}{ab}={\rm tr }{ba}$,  we have the following equalities:
\begin{align}\label{a71}
{\rm tr }[\widehat{c}(V)c(\xi')c(dx_n)\sum_{q=1}^nc(e_q)\widehat{c}(\nabla^L_{e_q}V)c(\xi')]=0,~~~{\rm tr }[\widehat{c}(V)\sum_{q=1}^nc(e_q)\widehat{c}(\nabla^L_{e_q}V)c(dx_n)]=0.\nonumber\\
\end{align}
Then, we get
\begin{align}\label{a71}
\Phi_5&=-i\int_{|\xi'|=1}\int^{+\infty}_{-\infty}{\rm trace} [\pi^+_{\xi_n}\sigma_{-1}({D_V}^{-1})\times
\partial_{\xi_n}\sigma_{-2}(({D_V}^*)^{-1})](x_0)d\xi_n\sigma(\xi')dx'\nonumber\\
&=-i\int_{|\xi'|=1}\int^{+\infty}_{-\infty}\frac{12h'(0)(\xi_n^3-\xi_n)}{(\xi-i)^4(\xi+i)^3}d\xi_ndx'-i\int_{|\xi'|=1}\int^{+\infty}_{-\infty}\frac{6h'(0)(1-3\xi_n^2-4\xi_n)}{(\xi-i)^4(\xi+i)^3}\sum_{k=1}^{n-1}\xi_k^2d\xi_ndx'\nonumber\\
&=-12h'(0)i\Omega_3\frac{2\pi i}{3!}[\frac{\xi_n^3-\xi_n}{(\xi_n+i)}]^{(3)}|_{\xi_n=i}-6h'(0)i\Omega_3\frac{2\pi i}{3!}\frac{\pi^2}{2}[\frac{1-3\xi_n^2-4\xi_n}{(\xi_n+i)}]^{(3)}|_{\xi_n=i}dx'\nonumber\\
&=\frac{3(i-2)\pi^2h'(0)}{4}\pi\Omega_3dx'.\nonumber\\
\end{align}
So,
\begin{align}\label{a73}
\Phi=\sum_{i=1}^5\Phi_i=\left(-\frac{3ih'(0)}{2}-\frac{27\pi^2h'(0)}{8}-\frac{\pi^2}{4}\right)\pi\Omega_3dx'.
\end{align}
Then, by (\ref{a14})-(\ref{a16}), we obtain Theorem \ref{athm1}.
\end{proof}
Next, we also prove the Kastler-Kalau-Walze type theorem for $4$-dimensional manifolds with boundary associated to ${D_V}^2$.
By (\ref{a7}) and (\ref{a8}), we will compute
\begin{equation}\label{a74}
\widetilde{{\rm Wres}}[\pi^+{D_V}^{-1}\circ\pi^+{D_V}^{-1}]=\int_M\int_{|\xi'|=1}{\rm
trace}_{\wedge^*T^*M\bigotimes\mathbb{C}}[\sigma_{-4}({D_V}^{-2})]\sigma(\xi)dx+\int_{\partial M}\overline{\Phi},
\end{equation}
where
\begin{align}\label{a75}
\overline{\Phi} &=\int_{|\xi'|=1}\int^{+\infty}_{-\infty}\sum^{\infty}_{j, k=0}\sum\frac{(-i)^{|\alpha|+j+k+1}}{\alpha!(j+k+1)!}
\times {\rm trace}_{\wedge^*T^*M\bigotimes\mathbb{C}}[\partial^j_{x_n}\partial^\alpha_{\xi'}\partial^k_{\xi_n}\sigma^+_{r}({D_V}^{-1})(x',0,\xi',\xi_n)
\nonumber\\
&\times\partial^\alpha_{x'}\partial^{j+1}_{\xi_n}\partial^k_{x_n}\sigma_{l}({D_V}^{-1})(x',0,\xi',\xi_n)]d\xi_n\sigma(\xi')dx',
\end{align}
and the sum is taken over $r+l-k-j-|\alpha|=-3,~~r\leq -1,l\leq-1$.\\

By Theorem \ref{thm2}, we compute the interior of $\widetilde{{\rm Wres}}[\pi^+{D_V}^{-1}\circ\pi^+{D_V}^{-1}]$, then
\begin{align}\label{a76}
&\int_M\int_{|\xi'|=1}{\rm
trace}_{\wedge^*T^*M\bigotimes\mathbb{C}}[\sigma_{-4}({D_V}^{-2})]\sigma(\xi)dx=32\pi^2\int_{M}
\bigg(
-\frac{4}{3}K-8\sum_{q=1}^4|\nabla^L_{e_q}V|^2\bigg)d{\rm Vol_{M}}.\nonumber\\
\end{align}
\begin{thm}\label{athm2}
Let $M$ be a $4$-dimensional oriented
compact manifold with boundary $\partial M$ and the metric
$g^{TM}$ as in Section \ref{Section:3}, the operator ${D_V}=\sqrt{-1}\widehat{c}(V)(d +\delta)$ be on $\widetilde{M}$ ($\widetilde{M}$ is a collar neighborhood of $M$), then
\begin{align}\label{a77}
&\widetilde{{\rm Wres}}[\pi^+{D_V}^{-1}\circ\pi^+{D_V}^{-1}]\nonumber\\
&=32\pi^2\int_{M}\bigg(
-\frac{4}{3}K-8\sum_{q=1}^4|\nabla^L_{e_q}V|^2\bigg)d{\rm Vol_{M}}+\int_{\partial M}\left(-\frac{3ih'(0)}{2}-\frac{27\pi^2h'(0)}{8}-\frac{\pi^2}{4}\right)\pi\Omega_3d{\rm Vol_{M}},\nonumber\\
\end{align}
where $K$ is the scalar curvature.
\end{thm}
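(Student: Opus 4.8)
\textit{Proof sketch.} The plan is to apply the Fedosov--Golse--Leichtnam--Schrohe formula (\ref{a7})--(\ref{a8}) to $\widetilde{{\rm Wres}}[\pi^+{D_V}^{-1}\circ\pi^+{D_V}^{-1}]$, writing it as an interior integral of ${\rm trace}[\sigma_{-4}({D_V}^{-2})]$ plus a boundary integral $\int_{\partial M}\overline\Phi$, where $\overline\Phi$ is given by (\ref{a75}). The interior term costs nothing new: $\sigma_{-4}({D_V}^{-2})|_M$ agrees with the symbol occurring in the closed case, so Theorem \ref{thm2} together with the trace identities (\ref{eq31}) yields precisely (\ref{a76}), namely $32\pi^2\int_M\big(-\tfrac43 K-8\sum_{q=1}^4|\nabla^L_{e_q}V|^2\big)d{\rm Vol_{M}}$. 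Hence the entire task reduces to computing $\int_{\partial M}\overline\Phi$.

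The main idea is that $\overline\Phi$ is obtained from the quantity $\Phi$ of Theorem \ref{athm1} by replacing only the \emph{second} factor $({D_V}^*)^{-1}$ by ${D_V}^{-1}$, while the first factor ${D_V}^{-1}$ is unchanged. Since Lemma \ref{lem6} gives $\sigma_{-1}({D_V}^{-1})=\sigma_{-1}(({D_V}^*)^{-1})$, and $\sigma_{-2}({D_V}^{-1})$ occurs in both computations, I would run through the same five cases $r+l-k-j-|\alpha|=-3$, $r\le-1$, $l\le-1$ as in the proof of Theorem \ref{athm1} and observe that cases a)~I)--III) and case b) use the second factor only through its degree $-1$ symbol (or not at all), and so are literally unchanged: $\overline\Phi_1=\Phi_1=0$, $\overline\Phi_2=\Phi_2$, $\overline\Phi_3=\Phi_3$, $\overline\Phi_4=\Phi_4$. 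The one case that is affected is case c) ($r=-1$, $l=-2$), where $\sigma_{-2}(({D_V}^*)^{-1})$ must be replaced by $\sigma_{-2}({D_V}^{-1})$.

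The discrepancy between those two symbols is governed by $\sigma_0({D_V}^*)-\sigma_0({D_V})=-i\sum_{q=1}^n c(e_q)\widehat{c}(\nabla^L_{e_q}V)$ from Lemma \ref{lem4}; feeding it through the formula for $\sigma_{-2}$ produces exactly the term $M_3$ appearing in (\ref{a64})--(\ref{a67}). The heart of the argument is then to show that $M_3$ makes no contribution to the boundary integral. I would do this as in (\ref{a70}): after moving $\widehat{c}(V)$ past the $c$-factors and past $\widehat{c}(\nabla^L_{e_q}V)$ using the Clifford relations (\ref{ali1}), (\ref{eq188}) and the cyclicity of the trace, every trace that $M_3$ produces reduces to a multiple of $g^{TM}(V,\nabla^L_{e_q}V)$ or of a trace of an odd word in the $c$'s; since $|V|=1$ forces $g^{TM}(V,\nabla^L_{e_q}V)=\tfrac12 e_q(|V|^2)=0$, all of these vanish. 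Consequently $\overline\Phi_5=\Phi_5$, hence $\overline\Phi=\Phi=\big(-\tfrac{3ih'(0)}2-\tfrac{27\pi^2 h'(0)}8-\tfrac{\pi^2}4\big)\pi\Omega_3\,dx'$, and combining this with (\ref{a76}) gives (\ref{a77}).

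The only genuine obstacle here is organizational rather than computational: one must check carefully that the lengthy case analysis behind Theorem \ref{athm1} carries over verbatim except in case c), and that no $\nabla^L V$-dependent trace slips through. The identity $g^{TM}(V,\nabla^L_{e_q}V)=0$ handles every such term, so beyond what is already done for Theorem \ref{athm1} no essentially new calculation is required.
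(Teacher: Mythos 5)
Your proposal is correct and takes essentially the same route as the paper's proof: both reduce to the five boundary cases, use $\sigma_{-1}(D_V^{-1})=\sigma_{-1}((D_V^*)^{-1})$ to make cases a)~I)--III) and b) coincide with those in Theorem \ref{athm1}, and handle case c) by observing that the $M_3$-term produced by $-i\sum_{q}c(e_q)\widehat{c}(\nabla^L_{e_q}V)$ contributes nothing. The only cosmetic distinction is that the paper computes $\overline{\Phi}_5$ directly from $\partial_{\xi_n}\sigma_{-2}(D_V^{-1})=M_1+M_2$, whereas you frame it as a difference argument $\overline{\Phi}_5-\Phi_5$ that the already-established vanishing of the $M_3$ contribution forces to be zero.
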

\begin{proof}
When $n=4$, by Lemma \ref{lem6}, $\sigma_{-1}({D_V}^{-1})=\sigma_{-1}(({D_V}^*)^{-1})$, then we have the following five cases:\\
\noindent  {\bf case a)~I)}~$r=-1,~l=-1,~k=j=0,~|\alpha|=1$.\\
\begin{align}\label{a78}
\overline{\Phi}_1&=-\int_{|\xi'|=1}\int^{+\infty}_{-\infty}\sum_{|\alpha|=1}
 {\rm tr}[\partial^\alpha_{\xi'}\pi^+_{\xi_n}\sigma_{-1}({D_V}^{-1})\times
 \partial^\alpha_{x'}\partial_{\xi_n}
 \sigma_{-1}({D_V}^{-1})](x_0)d\xi_n\sigma(\xi')dx'\nonumber\\
 &=0.\nonumber\\
\end{align}
\noindent  {\bf case a)~II)}~$r=-1,~l=-1,~k=|\alpha|=0,~j=1$.\\
\begin{align}\label{a79}
\overline{\Phi}_2&=-\frac{1}{2}\int_{|\xi'|=1}\int^{+\infty}_{-\infty} {\rm
trace} [\partial_{x_n}\pi^+_{\xi_n}\sigma_{-1}({D_V}^{-1})\times
\partial_{\xi_n}^2\sigma_{-1}({D_V}^{-1})](x_0)d\xi_n\sigma(\xi')dx'\nonumber\\
&=\bigg(\frac{(1-6i)h'(0)}{4}+\frac{(1-6i)\pi^2h'(0)}{8}\bigg)\pi\Omega_3dx'.\nonumber\\
\end{align}
\noindent  {\bf case a)~III)}~$r=-1,~l=-1,~j=|\alpha|=0,~k=1$.\\
\begin{align}\label{a80}
\overline{\Phi}_3&=-\frac{1}{2}\int_{|\xi'|=1}\int^{+\infty}_{-\infty}
{\rm trace} [\partial_{\xi_n}\pi^+_{\xi_n}\sigma_{-1}({D_V}^{-1})\times
\partial_{\xi_n}\partial_{x_n}\sigma_{-1}({D_V}^{-1})](x_0)d\xi_n\sigma(\xi')dx'\nonumber\\
&=\left(-\frac{2\pi^2-\pi^2h'(0)}{8}-h'(0)\right)\pi\Omega_3dx'.
\end{align}
\noindent  {\bf case b)}~$r=-2,~l=-1,~k=j=|\alpha|=0$.\\
\begin{align}\label{a81}
\overline{\Phi}_4&=-i\int_{|\xi'|=1}\int^{+\infty}_{-\infty}{\rm trace} [\pi^+_{\xi_n}\sigma_{-2}({D_V}^{-1})\times
\partial_{\xi_n}\sigma_{-1}({D_V}^{-1})](x_0)d\xi_n\sigma(\xi')dx'\nonumber\\
&=\frac{(6-9\pi^2)h'(0)}{8}\pi\Omega_3dx'.\nonumber\\
\end{align}
\noindent {\bf  case c)}~$r=-1,~l=-2,~k=j=|\alpha|=0$.\\
By (\ref{a75}), we get
\begin{equation}\label{a82}
\overline{\Phi}_5=-i\int_{|\xi'|=1}\int^{+\infty}_{-\infty}{\rm trace} [\pi^+_{\xi_n}\sigma_{-1}({D_V}^{-1})\times
\partial_{\xi_n}\sigma_{-2}({D_V}^{-1})](x_0)d\xi_n\sigma(\xi')dx'.
\end{equation}
By (\ref{a3}) and (\ref{a4}), Lemma \ref{lem6}, we have
\begin{align}\label{a83}
\pi^+_{\xi_n}\sigma_{-1}({D_V}^{-1})|_{|\xi'|=1}=\frac{i\widehat{c}(V)c(\xi')-\widehat{c}(V)c(dx_n)}{2(\xi_n-i)}.
\end{align}

Since
\begin{equation}\label{a84}
\sigma_{-2}(D_V^{-1})(x_0)=\frac{c(\xi)\sigma_{0}(D_V)(x_0)c(\xi)}{|\xi|^4}+\frac{c(\xi)}{|\xi|^6}c(dx_n)
\bigg[\partial_{x_n}(c(\xi'))(x_0)|\xi|^2-c(\xi)h'(0)|\xi|^2_{\partial_
M}\bigg],
\end{equation}
where
\begin{align}\label{a85}
&\sigma_{0}(D_V)(x_0)\nonumber\\
&=i\widehat{c}(V)\frac{1}{4}\sum_{s,t,i}\omega_{s,t}(e_i)(x_{0})c(e_i)\widehat{c}(e_s)\widehat{c}(e_t)
-i\widehat{c}(V)\frac{1}{4}\sum_{s,t,i}\omega_{s,t}(e_i)(x_{0})c(e_i)c(e_s)c(e_t)\nonumber\\
&=i\widehat{c}(V)a_0^1(x_0)+i\widehat{c}(V)a_0^2(x_0),\nonumber\\
\end{align}
then
\begin{align}\label{a86}
&\partial_{\xi_n}\sigma_{-2}(D_V^{-1})(x_0)|_{|\xi'|=1}\nonumber\\
&=
\partial_{\xi_n}\bigg\{\frac{c(\xi)[A_0^{1}(x_0)+A_0^{2}(x_0)]c(\xi)}{|\xi|^4}+\frac{c(\xi)}{|\xi|^6}c(dx_n)[\partial_{x_n}(c(\xi'))(x_0)|\xi|^2-c(\xi)h'(0)]\bigg\}\nonumber\\
&=\partial_{\xi_n}\bigg\{\frac{c(\xi)A_0^{1}(x_0)c(\xi)}{|\xi|^4}+\frac{c(\xi)}{|\xi|^6}c(dx_n)[\partial_{x_n}(c(\xi'))(x_0)|\xi|^2-c(\xi)h'(0)]\bigg\}+\partial_{\xi_n}\frac{c(\xi)A_0^{2}(x_0)c(\xi)}{|\xi|^4}\nonumber\\
&=M_1+M_2.\nonumber\\
\end{align}
By (\ref{a68}) and (\ref{a69}), we have
\begin{align}\label{a87}
\overline{\Phi}_5&=-i\int_{|\xi'|=1}\int^{+\infty}_{-\infty}{\rm trace} [\pi^+_{\xi_n}\sigma_{-1}({D_V}^{-1})\times
\partial_{\xi_n}\sigma_{-2}(({D_V}^*)^{-1})](x_0)d\xi_n\sigma(\xi')dx'\nonumber\\
&=-i\int_{|\xi'|=1}\int^{+\infty}_{-\infty}\frac{12h'(0)(\xi_n^3-\xi_n)}{(\xi-i)^4(\xi+i)^3}d\xi_ndx'-i\int_{|\xi'|=1}\int^{+\infty}_{-\infty}\frac{6h'(0)(1-3\xi_n^2-4\xi_n)}{(\xi-i)^4(\xi+i)^3}\sum_{k=1}^{n-1}\xi_k^2d\xi_ndx'\nonumber\\
&=-12h'(0)i\Omega_3\frac{2\pi i}{3!}[\frac{\xi_n^3-\xi_n}{(\xi_n+i)}]^{(3)}|_{\xi_n=i}-6h'(0)i\Omega_3\frac{2\pi i}{3!}\frac{\pi^2}{2}[\frac{1-3\xi_n^2-4\xi_n}{(\xi_n+i)}]^{(3)}|_{\xi_n=i}dx'\nonumber\\
&=\frac{3(i-2)\pi^2h'(0)}{4}\pi\Omega_3dx'.\nonumber\\
\end{align}

Therefore, we get
\begin{align}\label{a88}
\overline{\Phi}=\sum_{i=1}^5\overline{\Phi}_i=\left(-\frac{3ih'(0)}{2}-\frac{27\pi^2h'(0)}{8}-\frac{\pi^2}{4}\right)\pi\Omega_3dx'.
\end{align}
By (\ref{a74})-(\ref{a76}), we obtain Theorem \ref{athm2}.\\
\end{proof}
\section{The operator $\sqrt{-1}\widehat{c}(V)(d+\delta)$ for $3$-dimensioanl Spin Manifolds with Boundary}
\label{Section:4}
For an odd-dimensional manifolds with boundary, as in section 5,6 and 7 in \cite{Wa1}, we have the formula\\
\begin{align}\label{b1}
\widetilde{{\rm Wres}}[(\pi^+D_V^{-1})^2]=\int_{\partial M}\Psi.
\end{align}
When $n=3$, then in (\ref{a8}), $r-k-|\alpha|+l-j-1=-3$, $r,l \leq -1,$ so we get $r=l=-1,k=|\alpha|=j=0,$ then
\begin{eqnarray}\label{b2}
\Psi&=\int_{|\xi'|=1}\int^{+\infty}_{-\infty}{\rm trace}_{S(TM)}[\sigma^+_{-1}(D_V^{-1})(x',0,\xi',\xi_n)
\nonumber\\
&\times\partial_{\xi_n}\sigma_{-1}(D_V^{-1})(x',0,\xi',\xi_n)]d\xi_3\sigma(\xi')dx'.
\end{eqnarray}
By Lemma \ref{lem4}, we have
\begin{align}\label{b3}
&\sigma_{-1}^+({D_V}^{-1})|_{|\xi'=1|}=-\frac{\widehat{c}(V)[c(\xi')+ic(dx_n)]}{2i(\xi_n-i)};\nonumber\\
&\partial_{\xi_n}\sigma_{-1}^+({D_V}^{-1})|_{|\xi'=1|}=-\frac{\widehat{c}(V)c(dx_n)}{1+\xi_n^2}+\frac{2\xi_n\widehat{c}(V)c(\xi)}{(1+\xi_n^2)^2}.
\end{align}
For  $n=3$, we take the coordinates in Section 2. Locally $S(TM)|_{\widetilde{U}}\cong \widetilde{U}\times\bigwedge^{even}_\mathbf{C}(2).$ Let $\{\widetilde{f}_1, \widetilde{f}_2\}$ be an orthonormal basis of $\bigwedge^{even}_\mathbf{C}(2)$ and we will compute the trace under this basis.\\
By ${\rm tr}[c(\xi')c(dx_3)]=0;$ ${\rm tr}[c(dx_3)^2]=-8;$ ${\rm tr}[c(\xi')^2]|_{|\xi'=1|}=-8,$ we get\\
\begin{align}\label{b4}
&{\rm tr}[\widehat{c}(V)c(\xi')\widehat{c}(V)c(dx_3)]=0;~~~{\rm tr}[\widehat{c}(V)c(dx_3)\widehat{c}(V)c(dx_3)]=8;\nonumber\\
&{\rm tr}[\widehat{c}(V)c(\xi')\widehat{c}(V)c(\xi)]=8;~~~{\rm tr}[\widehat{c}(V)c(dx_3)\widehat{c}(V)c(\xi)]=8\xi_n.\nonumber\\
\end{align}
Then, by (\ref{b3}) and (\ref{b4}), we have
\begin{align}\label{b5}
{\rm trace}_{S(TM)}[\sigma^+_{-1}(D_V^{-1})
&\times\partial_{\xi_n}\sigma_{-1}(D_V^{-1})](x_0)|_{|\xi'=1|}=-\frac{4}{(\xi_n+i)^2(\xi_n-i)}.\nonumber\\
\end{align}
By (\ref{b2}) and (\ref{b5}) and the Cauchy integral formula, we get\\
\begin{align}\label{b6}
\Phi=2i\pi\Omega_2vol_{\partial M}=4i\pi^2vol_{\partial M},\nonumber\\
\end{align}
where $vol_{\partial M}$ denotes the canonical volume form of $\partial M$.\\
Therefore, we get the following theorem
\begin{thm}\label{athm3}
Let $M$ be a $3$-dimensional
oriented compact manifold with boundary $\partial M$ and the metric
$g^M$ as in Section \ref{Section:3}, the operator ${D_V}=\sqrt{-1}\widehat{c}(V)(d +\delta)$ be on $\widetilde{M}$ ($\widetilde{M}$ is a collar neighborhood of $M$), then
\begin{align}\label{b7}
\widetilde{{\rm Wres}}[(\pi^+D_V^{-1})^2]=4i\pi^2vol_{\partial M},\nonumber\\
\end{align}
where $vol_{\partial M}$ denotes the canonical volume form of $\partial M$.\\
\end{thm}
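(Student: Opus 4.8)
The plan is to read off $\widetilde{\mathrm{Wres}}[(\pi^+D_V^{-1})^2]$ from the Boutet de Monvel formula of Theorem \ref{th:32}. Since $\dim M=3$ is odd, the interior term $\int_M\int_{|\xi|=1}\mathrm{tr}[\sigma_{-3}(D_V^{-2})]\sigma(\xi)dx$ vanishes: for odd $n$ the degree-$(-n)$ homogeneous component of the symbol of the inverse of a Laplace-type operator is odd under $\xi\mapsto-\xi$, so its fibrewise integral over the symmetric cosphere is zero (this is the odd-dimensional mechanism of Sections 5--7 of \cite{Wa1}). Hence one is reduced to the boundary integral $\widetilde{\mathrm{Wres}}[(\pi^+D_V^{-1})^2]=\int_{\partial M}\Psi$ of (\ref{b1}). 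The first concrete step is a degree count: specializing the expansion (\ref{a8}) to $p_1=p_2=1$ and $n=3$, the constraint $r+l-k-j-|\alpha|-1=-3$ with $r\le-1$ and $l\le-1$ forces $r=l=-1$ and $k=j=|\alpha|=0$, so $\Psi$ collapses to the single term displayed in (\ref{b2}), built from $\sigma^+_{-1}(D_V^{-1})$ and $\partial_{\xi_n}\sigma_{-1}(D_V^{-1})$.

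Next I would assemble the two symbol factors. The leading symbol of $D_V$ is $-\widehat{c}(V)c(\xi)$ and, because $\widehat{c}(V)$ anticommutes with every $c(e_j)$ by (\ref{ali1}) while $\widehat{c}(V)^2=|V|^2=1$, one has $(-\widehat{c}(V)c(\xi))^2=|\xi|^2$; hence $\sigma_{-1}(D_V^{-1})=-\widehat{c}(V)c(\xi)/|\xi|^2$, as recorded in Lemma \ref{lem6} (cf. Lemma \ref{lem4}). Working in the boundary chart with metric $\tfrac{1}{h(x_n)}g^{\partial M}+dx_n^2$ of (\ref{a1}), restricting to $|\xi'|=1$, and writing $c(\xi)=c(\xi')+\xi_nc(dx_n)$, $|\xi|^2=1+\xi_n^2$, I would apply $\pi^+_{\xi_n}$ through the Cauchy integral formula (\ref{a3}) --- whose only relevant singularity is the simple pole $\xi_n=i$ in the upper half-plane --- to obtain $\sigma^+_{-1}(D_V^{-1})$, and differentiate the full symbol in $\xi_n$ to obtain $\partial_{\xi_n}\sigma_{-1}(D_V^{-1})$. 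Both are rational in $\xi_n$ with poles only at $\pm i$, as in (\ref{b3}).

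The third step is the fibre trace. Over the collar $S(TM)\cong\bigwedge^{even}_{\mathbb{C}}(2)$ has rank $2$, so the only Clifford traces needed are $\mathrm{tr}[c(\xi')c(dx_n)]=0$ and $\mathrm{tr}[c(dx_n)^2]=\mathrm{tr}[c(\xi')^2]=-8$ on $|\xi'|=1$. Feeding these into the product of the two factors and using that each $\widehat{c}(V)$ anticommutes past the $c$'s and squares to $1$, all $V$-dependence disappears and one is left with the four traces of (\ref{b4}); recombining them gives $\mathrm{trace}_{S(TM)}[\sigma^+_{-1}(D_V^{-1})\,\partial_{\xi_n}\sigma_{-1}(D_V^{-1})]=-4/[(\xi_n-i)(\xi_n+i)^2]$, i.e.\ (\ref{b5}). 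To finish, I would evaluate $\int_{-\infty}^{+\infty}-4/[(\xi_n-i)(\xi_n+i)^2]\,d\xi_n$ by closing the contour in the upper half-plane: the only enclosed pole is the simple pole at $\xi_n=i$, with residue $1$, so the integral equals $2\pi i$; integrating this constant over the unit circle $|\xi'|=1$ (circumference $2\pi$) and over $\partial M$ yields $\int_{\partial M}\Psi=4i\pi^2\,vol_{\partial M}$, which is (\ref{b7}).

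The step I expect to carry the real weight is the trace computation --- not because it is deep, but because $D_V$ is not the Dirac operator: one must check carefully that the two extra $\widehat{c}(V)$ insertions, one from $\sigma^+_{-1}(D_V^{-1})$ and one from $\partial_{\xi_n}\sigma_{-1}(D_V^{-1})$, cancel out of the trace, which they do precisely because $\widehat{c}(V)^2=1$ and $\widehat{c}(V)$ anticommutes with the Clifford elements by (\ref{ali1}); once this is confirmed the remainder is a routine residue calculation. A secondary, purely mechanical point is keeping the $\pi^+_{\xi_n}$-projection and the powers of $i$ in (\ref{a8}) straight, but no genuine obstacle arises there.
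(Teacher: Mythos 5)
Your proposal is correct and follows essentially the same route as the paper: reduce to the boundary integral $\int_{\partial M}\Psi$ via the odd-dimensional formula (\ref{b1}), use the degree count to isolate the single term $r=l=-1$, $k=j=|\alpha|=0$, compute $\sigma^+_{-1}(D_V^{-1})$ and $\partial_{\xi_n}\sigma_{-1}(D_V^{-1})$ from $\sigma_{-1}(D_V^{-1})=-\widehat{c}(V)c(\xi)/|\xi|^2$, cancel the two $\widehat{c}(V)$ factors inside the trace, and finish with the residue at $\xi_n=i$ times the circumference $2\pi$ of $|\xi'|=1$, giving $4i\pi^2\,vol_{\partial M}$. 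The only slip, inherited from the paper's own wording, is describing the bundle as the rank-two $S(TM)\cong\bigwedge^{even}_{\mathbb{C}}(2)$ while using the trace normalization $\mathrm{tr}[c(dx_n)^2]=\mathrm{tr}[c(\xi')^2]=-8$ appropriate to the rank-eight bundle $\bigwedge^{*}T^{*}M\otimes\mathbb{C}$ on which $D_V$ actually acts; the trace values and the final answer are unaffected.
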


\section*{Acknowledgements}
This work was supported by NSFC. 11771070 .
 The authors thank the referee for his (or her) careful reading and helpful comments.

\section*{}

\clearpage
\section*{Declarations}
\noindent a. Funding Information: This work was supported by National Natural Science Foundation of China: No.11771070. Author Tong Wu and Yong Wang have received research support from National Natural Science Foundation of China: No.11771070.\\

\noindent b. Competing Interests: The authors have no relevant financial or non-financial interests to disclose.\\

\noindent c. Author Contributions: All authors contributed to the study conception and design. Material preparation, data collection and analysis were performed by Tong Wu and Yong Wang. The first draft of the manuscript was written by Tong Wu and all authors commented on previous versions of the manuscript. All authors read and approved the final manuscript.
\end{document}